\theoremstyle{plain}
\newtheorem{theorem}{Theorem}[section]
\newtheorem{lemma}[theorem]{Lemma}
\newtheorem{proposition}[theorem]{Proposition}
\newtheorem{corollary}[theorem]{Corollary}
\theoremstyle{definition}
\newtheorem{definition}[theorem]{Definition}
\newtheorem{remark}[theorem]{Remark}
\newtheorem{example}[theorem]{Example}
\newtheorem{theoremx}{Theorem}
\numberwithin{equation}{section}
\newcommand\fantome[1]{}
\DeclareMathOperator{\ev}{ev}
\DeclareMathOperator{\sgn}{sgn}
\begin{document}

\title{Stark units in positive characteristic}

\author{Bruno Angl\`es  \and Tuan Ngo Dac \and Floric Tavares Ribeiro}

\address{
Normandie Universit\'e, Universit\'e de Caen Normandie,
Laboratoire de Math\'ematiques Nicolas Oresme,
CNRS UMR 6139,
Campus II, Boulevard Mar\'echal Juin,
B.P. 5186,
14032 Caen Cedex, France.
}
\email{bruno.angles@unicaen.fr,tuan.ngodac@unicaen.fr,floric.tavares-ribeiro@unicaen.fr}


\begin{abstract}
We show that the module of Stark units associated to a sign-normalized rank one Drinfeld module can be obtained from Anderson's equivariant $A$-harmonic series. We apply this to obtain a class formula \`a la Taelman and to prove a several variable log-algebraicity theorem, generalizing Anderson's log-algebraicity theorem. We also give another proof of Anderson's log-algebraicity theorem using shtukas and obtain various results concerning the module of Stark units for Drinfeld modules of arbitrary rank.
\end{abstract}

\subjclass[2010]{Primary 11G09; Secondary 11M38, 11R58}

\keywords{Stark units, $L$-series in positive characteristics, class formula, log-algebraicity theorem, shtukas}

\date{  \today}

\maketitle

\footnotetext[1]{The second author was partially supported by ANR Grant PerCoLaTor ANR-14-CE25-0002.}

\tableofcontents


\section*{Introduction}

The power-series $\sum_{n \geq 1} \frac{z^n}{n}$ is log-algebraic:
 $$\sum_{n \geq 1} \frac{z^n}{n}=-\log(1-z). $$
This identity allows one to obtain the value of a Dirichlet $L$-series at $s=1$ as an algebraic linear combination of logarithms of circular units. Inspired by examples of Carlitz \cite{CAR} and Thakur \cite{THA3}, Anderson (\cite{AND}, \cite{AND2}) proved an analogue of this identity for a sign-normalized rank one Drinfeld $A$-module, known as Anderson's log-algebraicity theorem.

When $A=\mathbb F_q[\theta]$ (the genus 0 case), various works (\cite{AND, AND2, AND&THA, APT, APTR, ATR, DEM, FAN1, FAN2, FAN0, PAP, PEL, TAE1, TAE2}) have revealed the importance of certain units in the study of special values of the Goss $L$-functions at $s=1$. To give a simple example, the Carlitz module is considered to play the role of the multiplicative group $\mathbb G_m$ over $\mathbb Z$, and Anderson (\cite{AND, AND2}) showed that the images through the Carlitz exponential of some special units give algebraic elements which are the equivalent of the circular units. The special units constructed in such a way are then ``log-algebraic''. Recently, Taelman (\cite{TAE1, TAE2}) introduced the module of units attached to any Drinfeld module and proved a class formula which states that the special value of the Goss $L$-function attached to a Drinfeld module at $s=1$ is the product of a regulator term arising from the module of units and an algebraic term arising from a certain class module. Also, deformations of Goss $L$-series values in Tate algebras are investigated by Pellarin and two of the authors (\cite{APT, APTR, ATR, PEL}). For higher dimensional versions of Drinfeld modules, we refer the reader to \cite{AND&THA, DEM, FAN1, FAN2, FAN0, PAP}.  We should mention that all these works are based on a crucial fact that $\mathbb F_q[\theta]$ is a principal ideal domain, which is no longer true in general.

In the present paper, we develop a new method to deal with higher genus cases. We introduce Stark units attached to Drinfeld $A$-modules extending the previous work of two of the authors (\cite{ATR}) and make a systematic study of these modules of Stark units. For a sign-normalized rank one Drinfeld module, we prove a direct link between the module of Stark units and Anderson's equivariant $A$-harmonic series, which is an analogue of Stark's conjectures. It allows us to obtain a class formula \`a la Taelman and a several variable log-algebraicity theorem in the general context.

\bigskip

Let us give now more precise statements of our results.

Let $K/\mathbb F_q$ be a global function field ($\mathbb F_q$ is algebraically closed in $K$),  let $A$ be the ring of elements of $K$ which are regular outside a fixed place $\infty$ of $K$ of degree $d_\infty\geq 1.$  The completion $K_\infty$ of $K$ at the place $\infty$ has residue field $\mathbb F_\infty$ and is endowed with the  $\infty$-adic valuation $v_\infty : K_\infty \twoheadrightarrow \mathbb Z\cup \{+\infty\}$. For $a \in A,$ we set: $\deg a:=-d_\infty v_\infty(a).$ We fix an algebraic closure $\overline{K}_\infty$ of $K_\infty,$ and still denote $v_\infty: \overline{K}_\infty \twoheadrightarrow \mathbb Q\cup \{+\infty\}$ the extension of $v_\infty$ to $\overline{K}_\infty.$ Let $\tau: \overline{K}_\infty \rightarrow \overline{K}_\infty$ be the $\mathbb F_q$-algebra homomorphism which sends $x$ to $x^q.$

\medskip

We choose a sign function $\sgn:K_\infty^\times \rightarrow \mathbb F_\infty^\times$, that is, a group homomorphism such that $\sgn\mid_{\mathbb F_\infty^\times}= {\rm Id}_{\mathbb F_\infty^\times}.$ Let $\phi:A\hookrightarrow \overline{K}_\infty\{\tau\}$ be a sign-normalized rank one Drinfeld module (see Section \ref{sgn}), i.e. there exists an integer $i(\phi)\in \mathbb N$ such that:
 $$\forall a\in A, \quad \phi_a=a+\cdots +\sgn(a) ^{q^{i(\phi)}}\tau^{\deg a}.$$
Then, the exponential series attached to $\phi$ is the unique element $\exp_\phi \in \overline{K}_\infty\{\{\tau\}\},$ such that $\exp_\phi \equiv 1\pmod{\tau},$ and:
 $$\forall a\in A, \quad \exp_\phi a =\phi_a\exp_\phi.$$
If we write:
 $$\exp_\phi=\sum_{i\geq 0}e_i(\phi) \tau ^i, $$
with $e_i(\phi)\in \overline{K}_\infty,$  then the field $H:=K(e_i(\phi), i\in \mathbb N)$ is a finite abelian extension of $K$ which is unramified outside $\infty$ (see Section \ref{sgn}). Let $B$ be the integral closure of $A$ in $H.$ For all $a\in A,$ we have:
 $$\phi_a\in B\{\tau\}.$$
For a non-zero ideal $I$ of $A,$ we define $\phi_I\in H\{\tau\}$ to be the monic element in $H\{\tau\}$ such that:
 $$H\{\tau \}\phi_I=\sum_{a\in I}H\{\tau \} \phi_a.$$
In fact, $\phi_I\in B\{\tau\}$ and we denote its constant term by $\psi(I)\in B\setminus\{0\}.$

\medskip

For simplicity, we will work over the abelian extension $H/K.$ We should mention that the results presented below are still valid for any finite abelian extension $E/K$ such that $H \subset E.$

\medskip

Let $G={\rm Gal}(H/K).$ For a non-zero ideal $I$ of $A$, we denote by $\sigma_I =(I,H/K) \in G,$ where $(\cdot,H/K)$ is the Artin map. Let $z$ be an indeterminate over $K_\infty$ and let $\mathbb T_z(K_\infty)$ be the Tate algebra in the variable $z$ with coefficients in $K_\infty.$ Let's set:
$$H_\infty=H\otimes_KK_\infty,$$
and:
$$\mathbb T_z(H_\infty)=H\otimes_K\mathbb T_z(K_\infty).$$
Let $\tau:\mathbb T_z(H_\infty)\rightarrow \mathbb T_z(H_\infty)$ be the continuous $\mathbb F_q[z]$-algebra homomorphism such that:
 $$\forall x\in H_\infty, \quad \tau(x)=x^q.$$
We set:
 $$\exp_{\widetilde{\phi}}=\sum_{i\geq 0}e_i(\phi) z^i\tau^i\in H[z]\{\{\tau\}\}.$$
Then $\exp_{\widetilde{\phi}}$ converges on $\mathbb T_z(H_\infty).$ Following \cite{ATR}, we introduce the module of $z$-units attached to $\phi/B$:
 $$U(\widetilde{\phi}/B[z])=\{ f\in \mathbb T_z(H_\infty), \exp_{\widetilde{\phi}}(f)\in B[z]\}.$$
We denote by $\ev:\mathbb T_z(H_\infty)\rightarrow H_\infty$ the evaluation at $z=1.$ The module of Stark units attached to $\phi/B$ is defined by (see \cite{ATR}, Section 2):
 $$U_{St}(\phi/B)=\ev(U(\widetilde{\phi}/B[z]))\subset H_\infty.$$
Then $U_{St}(\phi/B)$ is an $A$-lattice in $H_\infty$ (see Theorem \ref{TheoremS4.1}), i.e. $U_{St}(\phi/B)$ is  an $A$-module which  is discrete and cocompact in $H_\infty.$ In fact, $U_{St}(\phi/B)$ is contained in the $A$-module of the Taelman module of units \cite{TAE1} defined by:
 $$U(\phi/B)=\{ x\in H_\infty, \exp_\phi(x)\in B\},$$
which is also an $A$-lattice in $H_\infty.$ Following Taelman \cite{TAE1}, the Taelman class module $H(\phi/B)$ is a finite $A$-module (via $\phi$) defined by:
 $$H(\phi/B)=\frac{H_\infty}{B+\exp_\phi(H_\infty)}.$$

\medskip

Following Anderson \cite{AND}, we introduce the following series (see Section \ref{Example}):
$$\mathcal L(\phi/B; 1;z)= \sum_I \frac{z^{\deg I}}{\psi(I)}\sigma_I \in \mathbb T_z(H_\infty)[G],$$
where the sum runs through the non-zero ideals $I$ of $A.$  The equivariant $A$-harmonic series attached to $\phi/B$ is defined by:
$$\mathcal L(\phi/B)=\ev(\mathcal L(\phi/B; 1;z))\in H_\infty[G].$$

One of our main theorems states (see Theorem \ref{TheoremS6.2}) that the module of Stark units $U_{St}(\phi/B)$ can be obtained from the equivariant $A$-harmonic series $\mathcal L(\phi/B)$, which is reminiscent of Stark's Conjectures (\cite{TAT2}):
\begin{theoremx} \label{key result}
We have:
 $$U(\widetilde{\phi}/B[z])= \mathcal L(\phi/B; 1;z)B[z].$$
In particular,
 $$U_{St}(\phi/B)= \mathcal L(\phi/B)B.$$
\end{theoremx}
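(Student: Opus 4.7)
The plan is to prove the equality $U(\widetilde\phi/B[z]) = \mathcal L(\phi/B;1;z) B[z]$ by establishing both inclusions separately; the ``in particular'' statement about $U_{St}(\phi/B)$ then follows immediately by applying $\ev$ at $z=1$. The central tool is Hayes' intertwining identity
\[
\phi_I\,\exp_\phi \;=\; \exp_{\sigma_I(\phi)}\cdot\psi(I),
\]
where $\sigma_I(\phi)$ denotes the sign-normalized rank one Drinfeld module obtained by applying $\sigma_I$ coefficient-wise to $\phi$. Applying the $z$-twisting ring homomorphism $\sum_i a_i\tau^i \mapsto \sum_i a_iz^i\tau^i$ transports this to the analogous identity in the twisted power-series ring over $\mathbb T_z(H_\infty)$, which is the working form.

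For the forward inclusion $\mathcal L(\phi/B;1;z)B[z]\subseteq U(\widetilde\phi/B[z])$ (the log-algebraicity direction), I would reduce by $A[z]$-linearity to showing $\exp_{\widetilde\phi}(\mathcal L(\phi/B;1;z)\cdot b) \in B[z]$ for every $b\in B$. Writing
\[
\mathcal L(\phi/B;1;z)\cdot b = \sum_I \frac{z^{\deg I}\,\sigma_I(b)}{\psi(I)}
\]
and applying the $z$-twisted intertwining identity summand-wise, each term $\exp_{\widetilde\phi}(z^{\deg I}\sigma_I(b)/\psi(I))$ is related, via the $z$-twist of $\phi_I$, to $z^{\deg I}\sigma_I(\exp_{\widetilde\phi}(b))$. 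Grouping ideals by their image under the Artin map --- using Hayes' parametrization of each class by $K^\times$ modulo sign-$1$ units --- and exploiting the multiplicativity of $\psi$ together with $\phi_{aI}=\phi_a\phi_I$ for $\sgn(a)=1$, each class sum collapses into a finite polynomial in $z$ with coefficients in $B$.

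For the reverse inclusion I would consider the $A[z]$-linear map $\alpha:B[z]\to U(\widetilde\phi/B[z])$, $f\mapsto \mathcal L(\phi/B;1;z)\cdot f$, which is well defined by the first step. Injectivity follows from $\mathcal L(\phi/B;1;z)\equiv 1\pmod z$ (by descent on $z$-order). For surjectivity I evaluate at $z=1$: by Theorem~\ref{TheoremS4.1}, $U_{St}(\phi/B)$ is an $A$-lattice in $H_\infty$ of rank $[H:K]$, and $\mathcal L(\phi/B)\cdot B$ is likewise a full-rank $A$-submodule by non-vanishing of the Dirichlet-like values of $\mathcal L(\phi/B)$. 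Combining this rank comparison with the explicit form of $\alpha$ produced by the forward inclusion (which realizes $\alpha$ as an isomorphism degree-by-degree in $z$) forces the cokernel to vanish.

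The principal obstacle is the finite-support claim in the forward inclusion --- proving that $\exp_{\widetilde\phi}(\mathcal L(\phi/B;1;z)\cdot b)$ is a polynomial in $z$, not merely an element of $\mathbb T_z(H_\infty)$. This is the heart of log-algebraicity and requires delicate $v_\infty$-adic analysis, exploiting the precise growth of $v_\infty(\psi(I))$ as $\deg I\to\infty$ together with the telescoping of the partial class sums. It is exactly this step that genuinely extends Anderson's original theorem from $A=\mathbb F_q[\theta]$ to arbitrary $A$.
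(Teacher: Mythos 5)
Your plan diverges substantially from the paper's proof, and it has two concrete gaps that you would need to close.

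The paper's argument (Theorem~\ref{TheoremS6.2}, specialized to $E=H$) is a local-global one. For each maximal ideal $P$ of $A$ it works in the localization $R_P=S^{-1}O_E$ with $S=A\setminus P$, shows via Lemma~\ref{LemmaS6.5}(1) that $\exp_{\widetilde\phi}(PR_P[[z]])=PR_P[[z]]$, and then uses the single-prime intertwining relation $\widetilde\phi_P\exp_{\widetilde\phi}=\sigma_P(\exp_{\widetilde\phi})\,\psi(P)$ to identify the local module of $z$-units as $(1-\tfrac{z^{\deg P}}{\psi(P)}\sigma_P)^{-1}R_P[[z]]$. Intersecting over all $P$ characterizes $\{x\in E[[z]]:\exp_{\widetilde\phi}(x)\in O_E[[z]]\}$ as $\mathcal L(\phi/B;1;z)\,O_E[[z]]$, and then the intersection with the Tate algebra finishes both inclusions at once because $O_E[[z]]\cap\mathbb T_z(E_\infty)=O_E[z]$ (discreteness of $O_E$). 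You instead attack the two inclusions separately, summand-by-summand over ideals for the forward direction, and by a lattice-rank argument after specializing $z=1$ for the reverse.

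The first gap is the finite-support claim. You flag it yourself as the ``principal obstacle'', but you offer only words --- ``grouping ideals by the Artin map'', ``telescoping'', ``delicate $v_\infty$-adic analysis'' --- with no actual mechanism. That is precisely the hard core of Anderson's theorem and of the generalization being proved here, so it cannot be deferred. The paper's proof never confronts it directly: because $O_E$ is discrete in $E_\infty$ and elements of $\mathbb T_z(E_\infty)$ have coefficients tending to $0$, the inclusion in $O_E[[z]]\cap\mathbb T_z(E_\infty)=O_E[z]$ yields polynomiality for free, and the nontrivial work is pushed into the $P$-adic integrality statement of Lemma~\ref{LemmaS6.5}(1). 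Your plan as written replaces that clean device by an unrealized estimate, and I do not see how ``class sums collapse'' without essentially redoing Anderson's original analytic argument, which the paper deliberately avoids.

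The second gap is the reverse inclusion. You argue by specializing at $z=1$, comparing $A$-lattices in $H_\infty$, and then claiming the cokernel of $\alpha:f\mapsto\mathcal L(\phi/B;1;z)f$ vanishes ``degree by degree''. But surjectivity of the $z=1$ specialization does not entail surjectivity of $\alpha$ on $\mathbb T_z$-level: specialization can kill torsion and collapse distinct $z$-power-series onto the same element of $H_\infty$, and nothing in your forward inclusion realizes $\alpha$ as an isomorphism degree-by-degree. You would need to show $\mathcal L(\phi/B;1;z)^{-1}U(\widetilde\phi/B[z])\subseteq B[z]$ directly --- which the paper gets from its local characterization and the invertibility of $\mathcal L(\phi/B;1;z)$ in $(\mathbb T_z(H_\infty)[G])^\times$ established in Lemma~\ref{LemmaS6.4}, rather than from any rank or non-vanishing argument at $z=1$.
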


We will present several applications of this theorem.

\medskip

Firstly, we apply Theorem \ref{key result} to obtain a class formula \`a la Taelman for $\phi/B,$ by a different method of Taelman's original one \cite{TAE2}. Roughly speaking, we introduce the Stark regulator (resp. the regulator defined by Taelman \cite{TAE2}) attached to $\phi/B$ by $[B:U_{St}(\phi/B)]_A\in \overline{K}_\infty^\times$ (resp. $[B:U(\phi/B)]_A\in \overline{K}_\infty^\times$) (see Section \ref{Prel}). We show (see Theorem \ref{TheoremS4.1}):
\begin{theoremx} \label{regulator}
We have:
 $${\rm Fitt}_A\frac{U(\phi/B)}{U_{St}(\phi/B)}= {\rm Fitt}_AH(\phi/B),$$
where, for a finite $A$-module $M,$ ${\rm Fitt}_A M$ is the Fitting ideal of $M$.
\end{theoremx}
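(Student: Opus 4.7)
The plan is to use Theorem A as the main input, together with a snake-lemma style analysis of the exponential map, to compare $U(\phi/B)/U_{St}(\phi/B)$ with $H(\phi/B)$ and so obtain the equality of Fitting ideals.

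First, by Theorem A, $U_{St}(\phi/B) = \mathcal{L}(\phi/B)\,B$, so $U_{St}(\phi/B)$ is realized as the image of the $A$-linear map $B \to H_\infty$ given by multiplication by the equivariant $A$-harmonic series. Since both $U_{St}(\phi/B) \subseteq U(\phi/B)$ are $A$-lattices in $H_\infty$, their quotient is a finite $A$-module with a well-defined Fitting ideal, just as $H(\phi/B)$ is by Taelman's theory. This puts us in a position where the identity of Fitting ideals is at least a meaningful statement about two concretely given finite $A$-modules.

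The key step is to construct a short exact sequence of finite $A$-modules linking $U(\phi/B)/U_{St}(\phi/B)$ with $H(\phi/B)$. The natural approach is to apply the snake lemma to a diagram in which $\exp_\phi : H_\infty \to H_\infty$ connects the inclusion $U_{St}(\phi/B) \subseteq U(\phi/B)$ on the top row with $\exp_\phi(U_{St}(\phi/B)) \subseteq B$ on the bottom row. Interpreting the resulting cokernel through the presentation $H(\phi/B) = H_\infty/(B + \exp_\phi(H_\infty))$, and using that $\exp_\phi$ restricted to $U(\phi/B)$ lands in $B$ by definition, one should obtain an exact sequence producing either an outright isomorphism $U(\phi/B)/U_{St}(\phi/B) \cong H(\phi/B)$, or at the very least an equality of Fitting ideals via additivity on short exact sequences of finite $A$-modules.

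The main obstacle is the bookkeeping: Fitting ideals of the infinite lattices $B$, $U_{St}(\phi/B)$, $U(\phi/B)$ themselves are not defined, so one must pass to finite quotients (for instance modulo a large ideal of $A$) and check compatibility of the snake-lemma diagrams under such reductions. In addition, the operator $\mathcal{L}(\phi/B)$ lies in $H_\infty[G]$ rather than in $B[G]$, so verifying that its interaction with $\exp_\phi$ is compatible with the snake-lemma argument requires careful use of the $z$-variable identity $\exp_{\widetilde{\phi}}(\mathcal{L}(\phi/B;1;z)) \in B[z]$ that underlies Theorem A. Once these compatibilities are in place, standard multiplicativity of Fitting ideals on short exact sequences yields the stated identity.
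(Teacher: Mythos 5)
Your proposal has a genuine gap, and in fact the overall strategy is misaligned with how the result is actually obtained.

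First, the logical dependence is reversed. You propose Theorem A as the main input, but the Fitting-ideal identity ${\rm Fitt}_A\frac{U(\phi/B)}{U_{St}(\phi/B)}= {\rm Fitt}_AH(\phi/B)$ is Theorem \ref{TheoremS4.1} in the paper, which is proved \emph{before} Theorem A, independently of it, and for an \emph{arbitrary} Drinfeld module $\rho/O_E$ of any rank $r\geq 1$ (not just sign-normalized rank one). Theorem A, by contrast, only holds for sign-normalized rank one modules. Basing the proof on Theorem A would therefore both misorder the development and needlessly restrict the generality. Moreover, knowing $U_{St}(\phi/B)=\mathcal L(\phi/B)B$ does not by itself help to compare $U/U_{St}$ with $H$: you would still have no closed form for $U(\phi/B)$, and the $z$-identity $\exp_{\widetilde\phi}\big(\mathcal L(\phi/B;1;z)\big)\in B[z]$ you invoke does not feed directly into a snake-lemma diagram involving only $\exp_\phi$.

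Second, the bridge you sketch is not the right one. The mechanism in the paper is not a snake lemma built from $\exp_\phi$ on two rows. One must pass to the $z$-deformed class module $H(\widetilde\phi/B[z])=\mathbb T_z(H_\infty)/\big(B[z]+\exp_{\widetilde\phi}(\mathbb T_z(H_\infty))\big)$, which is a finite $\mathbb F_q[z]$-module. Multiplication by $z-1$ on it produces a four-term exact sequence whose kernel $V=H(\widetilde\phi/B[z])[z-1]$ satisfies ${\rm Fitt}_A V={\rm Fitt}_A H(\phi/B)$. The decisive construction is the $\mathbb F_q[z]$-linear map
\[
\alpha(x)=\frac{\exp_{\widetilde\phi}(x)-\exp_\phi(x)}{z-1},
\]
which induces a surjection $\bar\alpha: U(\phi/B)\twoheadrightarrow V$ with kernel exactly $U_{St}(\phi/B)$. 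This yields the Fitting-ideal equality via $V$ as an intermediary. Note that there is in general \emph{no} isomorphism $U(\phi/B)/U_{St}(\phi/B)\cong H(\phi/B)$; the equality holds only at the level of Fitting ideals, precisely because it is mediated by $V$ rather than by $H(\phi/B)$ itself. Your proposal neither identifies the map $\alpha$ nor the role of the module $V$, which is the substance of the proof, and the "reduction modulo a large ideal" compatibility you flag as the main obstacle is not what is actually needed.
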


Observe that $\mathcal L(\phi/B)$ induces a $K_\infty$-linear map on $H_\infty,$ and we denote by $\det_{K_\infty} \mathcal L(\phi/B)$ its determinant. We prove the following formula (see Theorem \ref{TheoremS6.1}):
 $${\det}_{K_\infty} \mathcal L(\phi/B)=\zeta_B(1):=\prod_{\frak P}(1-\frac{1}{[\frac{B}{\frak P}]_A})^{-1}\in \overline{K}_\infty^\times,$$
where $\frak P$ runs through the maximal ideals of $B.$ Note that $\zeta_B(1)$ is a special value at $s=1$ of some zeta function $\zeta_B(s)$ introduced by Goss (see \cite{GOS}, Chapter 8). Therefore, Theorem \ref{key result} and Theorem \ref{regulator} imply Taelman's class formula for $\phi/B$ (see Theorem \ref{TheoremS6.3}):
\begin{theoremx} \label{intro class formula}
We have:
 $$\zeta_B(1)=[B:U_{St}(\phi/B)]_A=[B:U(\phi/B)]_A[H(\phi/B)]_A.$$
\end{theoremx}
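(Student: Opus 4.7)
The plan is to derive the class formula by assembling three results already stated in the introduction: Theorem \ref{key result}, Theorem \ref{regulator}, and the determinant formula
$$
{\det}_{K_\infty} \mathcal L(\phi/B) = \zeta_B(1)
$$
stated just before Theorem \ref{intro class formula}. No further analytic input is needed; the whole content is organizational, converting Theorem \ref{key result} into the first equality and Theorem \ref{regulator} into the second.

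First I would identify the Stark regulator with $\zeta_B(1)$. By Theorem \ref{key result}, $U_{St}(\phi/B) = \mathcal L(\phi/B)\cdot B$ inside $H_\infty$. The element $\mathcal L(\phi/B) \in H_\infty[G]$ acts as a $K_\infty$-linear automorphism $m_{\mathcal L}$ of $H_\infty$ through the natural $K_\infty$-linear action of $G$, and $B$ is an $A$-lattice in $H_\infty$ (Theorem \ref{TheoremS4.1}). Hence $U_{St}(\phi/B) = m_{\mathcal L}(B)$ is also an $A$-lattice, and the very definition of the lattice index recalled in Section \ref{Prel} yields
$$
[B : U_{St}(\phi/B)]_A = {\det}_{K_\infty}(m_{\mathcal L}) = {\det}_{K_\infty} \mathcal L(\phi/B) = \zeta_B(1),
$$
which is the first equality.

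Next I would combine this with Theorem \ref{regulator}. Since $U_{St}(\phi/B) \subset U(\phi/B)$ and both are $A$-lattices in $H_\infty$, multiplicativity of the lattice index along the chain $U_{St}(\phi/B) \subset U(\phi/B) \subset H_\infty$ gives
$$
[B : U_{St}(\phi/B)]_A = [B : U(\phi/B)]_A \cdot [U(\phi/B) : U_{St}(\phi/B)]_A.
$$
Theorem \ref{regulator} equates ${\rm Fitt}_A (U(\phi/B)/U_{St}(\phi/B))$ with ${\rm Fitt}_A H(\phi/B)$, which by the conventions defining $[M]_A$ for a finite $A$-module $M$ translates into $[U(\phi/B) : U_{St}(\phi/B)]_A = [H(\phi/B)]_A$. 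Substituting into the previous display, together with the first equality, produces $\zeta_B(1) = [B : U(\phi/B)]_A \cdot [H(\phi/B)]_A$, completing the proof.

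The only real obstacle is notational compatibility. One must check that the conventions fixed in Section \ref{Prel} identify on the nose the index $[B : m_{\mathcal L}(B)]_A$ with ${\det}_{K_\infty} m_{\mathcal L}$, that lattice indices are genuinely multiplicative along chains of $A$-lattices in $H_\infty$, and that the Fitting-ideal equality of Theorem \ref{regulator} cleanly delivers $[U(\phi/B) : U_{St}(\phi/B)]_A = [H(\phi/B)]_A$ without any spurious unit or sign. Once these compatibilities are confirmed in the preliminaries, Theorem \ref{intro class formula} is a three-line corollary of Theorems \ref{key result}, \ref{regulator}, and the determinant formula.
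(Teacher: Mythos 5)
Your proof is correct, and it is a valid (and somewhat more streamlined) route to the statement than the one the paper actually takes. The paper proves Theorem~\ref{intro class formula} as a special case of the more general Theorem~\ref{TheoremS6.3} (which allows any finite abelian $E/K$ with $H \subset E$, hence possibly wildly ramified primes). Because Theorem~\ref{TheoremS6.2} only gives an equality $\mathcal L(\phi/O_E;1;z)O_E[z]=U(\widetilde{\phi}/O_E[z])$ when $S_E=\emptyset$, the paper cannot use it directly in that generality and instead works with finite truncations $\mathcal L_J(\phi/O_E;1;z)$ (attached to a finite product $J$ of maximal ideals), sets up a Snake-Lemma exact sequence to compare $U_{St}$ with $\psi(J)\mathcal L_J(\phi/O_E)O_E$, and then takes a limit $J=J_N\to\infty$ to recover $[O_E:U_{St}(\phi/O_E)]_A=L_A(\phi/O_E)$, finally invoking Theorem~\ref{TheoremS4.1} and Proposition~\ref{PropositionS6.2}. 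In the specific case $E=H$, one has $S_H=\emptyset$ (since $H/K$ is unramified away from $\infty$), so Theorem~\ref{key result} applies directly and your shortcut is legitimate: $U_{St}(\phi/B)=\mathcal L(\phi/B)B$ is the image of the $A$-lattice $B$ under the $K_\infty$-linear automorphism $m_{\mathcal L}$, whence $[B:U_{St}(\phi/B)]_A = {\det}_{K_\infty}\mathcal L(\phi/B)/\sgn({\det}_{K_\infty}\mathcal L(\phi/B))$ by the very definition in Section~\ref{Prel}; this equals ${\det}_{K_\infty}\mathcal L(\phi/B)=\zeta_B(1)$ because each Euler factor of $\zeta_B(1)$ is a principal unit, so $\zeta_B(1)\in U_\infty$ and $\sgn(\zeta_B(1))=1$. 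The second equality follows, exactly as you say, from multiplicativity of lattice indices and the fact (stated in Section~\ref{Prel}) that $[M:N]_A=[M/N]_A$ when $N\subset M$, combined with Theorem~\ref{regulator}. The one point you flag as an ``obstacle'' --- the $\sgn$ normalization --- is genuine, and the resolution is precisely the observation that $\sgn(\zeta_B(1))=1$; had you wanted a fully self-contained argument you would spell this out, but flagging it is enough to show you understand where the check lies. What the paper's longer argument buys, which your shortcut does not, is a precise comparison $[\,U_{St}(\phi/O_E):\psi(I(O_E))\mathcal L(\phi/O_E)O_E\,]_A=[\phi(O_E/I(O_E)O_E)]_A$ valid even with wild ramification; for $E=H$ that extra information is vacuous.
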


When the genus of $K$ is zero and $d_\infty=1$, Taelman's class formula, its higher dimensional versions,  and its arithmetic consequences are now well-understood  due to the recent works (\cite{APT}, \cite{AT}, \cite{ATR}, \cite{DEM}, \cite{FAN1}, \cite{FAN2}, \cite{FAN0}, \cite{TAE1}, \cite{TAE2}). All these works are based on the crucial fact that when $g=0$ and $d_\infty=1$, the ring $A$ is a principal ideal domain (when $A$ is not assumed to be principal, the existence of a class formula is still an open problem in general). Using the module of Stark units, we are able to overcome this difficulty, and Theorem \ref{intro class formula} provides a large class of  examples of Taelman's class formula when $A$ is no longer principal. We refer the reader to Section \ref{RegS} for a more detailed discussion.

\medskip

Secondly, we apply Theorem \ref{key result} to prove a several variable log-algebraicity theorem, generalizing Anderson's log-algebraicity theorems (see Theorem \ref{TheoremS6.4}). (The theorem below is valid for any finite abelian extension $E/K$, $H\subset E,$ see Theorem \ref{TheoremS6.4} for the precise statement).

\begin{theoremx} \label{log algebraicity}
Let $n\geq 0$ and let $X_1, \ldots, X_n,z$ be $n+1$ indeterminates over $K.$ Let $\tau: K[X_1, \ldots, X_n][[z]]\rightarrow K[X_1, \ldots, X_n][[z]]$ be the continuous $\mathbb F_q[[z]]$-algebra homomorphism for the $z$-adic topology such that $\forall x\in K[X_1, \ldots, X_n], \tau (x)=x^q.$ Then:
 $$\forall b\in B, \quad \exp_{\widetilde{\phi}}(\sum_I \frac{\sigma_I(b)}{\psi(I)} \phi_I(X_1)\cdots \phi_I(X_n) z^{\deg I})\in B[X_1, \ldots, X_n, z],$$
where $I$ runs through the non-zero ideals of $A.$
\end{theoremx}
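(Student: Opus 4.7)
The case $n=0$ is an immediate consequence of Theorem~\ref{key result}: for $b\in B$, the series $\sum_I\frac{\sigma_I(b)}{\psi(I)}z^{\deg I}$ equals $\mathcal{L}(\phi/B;1;z)\cdot b$, which lies in $U(\widetilde{\phi}/B[z])=\mathcal{L}(\phi/B;1;z)\,B[z]$ by Theorem~\ref{key result}, and so by definition of the module of $z$-units its image under $\exp_{\widetilde{\phi}}$ belongs to $B[z]$.

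For general $n\geq 1$, first observe that
$$\mathcal{F}_n(b;X;z):=\sum_I\frac{\sigma_I(b)\,\phi_I(X_1)\cdots\phi_I(X_n)}{\psi(I)}\,z^{\deg I}$$
is a well-defined element of $H[X_1,\ldots,X_n][[z]]$, since for each fixed $d$ only finitely many ideals $I$ contribute to the coefficient of $z^d$. The goal is to show that $\exp_{\widetilde{\phi}}(\mathcal{F}_n)$ is actually a polynomial in $z,X_1,\ldots,X_n$ with coefficients in $B$.

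The plan is to establish a multi-variable avatar of Theorem~\ref{key result} over the polynomial ring $\widetilde{B}:=B[X_1,\ldots,X_n]$, equipped with the extension of $\tau$ by $\tau(X_j)=X_j^q$. Concretely, one introduces the Tate algebra $\mathbb{T}_z(H_\infty)\otimes_B\widetilde{B}$, on which $\widetilde{\phi}$ and $\exp_{\widetilde{\phi}}$ act naturally, and defines the corresponding module of $z$-units $U(\widetilde{\phi}/\widetilde{B}[z])$. The enriched harmonic series
$$\widetilde{\mathcal{L}}_n(z;X):=\sum_I\frac{\phi_I(X_1)\cdots\phi_I(X_n)\,z^{\deg I}}{\psi(I)}\,\sigma_I$$
satisfies $\widetilde{\mathcal{L}}_n(z;X)\cdot b=\mathcal{F}_n(b;X;z)$; the multi-variable analogue of Theorem~\ref{key result} then asserts $\widetilde{\mathcal{L}}_n(z;X)\cdot\widetilde{B}[z]\subseteq U(\widetilde{\phi}/\widetilde{B}[z])$, yielding the theorem at once.

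The main obstacle is proving this enriched version of Theorem~\ref{key result}. The crucial inputs are the Hayes-module commutations relating $\phi_I$ to the Galois action $\sigma_I$, which allow the $\phi_I$-twists on the $X_j$ to be absorbed into Galois data, together with convergence estimates for $\widetilde{\mathcal{L}}_n$ in the enlarged Tate algebra. With these in place, the shtuka-theoretic and equivariant machinery developed for Theorem~\ref{key result} should extend essentially verbatim, since the variables $X_j$ are inert for the key structural results (discreteness, cocompactness, and the generating property of the harmonic series).
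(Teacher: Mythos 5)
Your handling of the base case $n=0$ via Theorem~\ref{key result} is correct and matches the paper. For $n\geq 1$, however, the plan rests on the assertion that the machinery behind Theorem~\ref{key result} "extends essentially verbatim" because "the variables $X_j$ are inert for the key structural results." They are not inert, and that is the crux of the matter. The map $I\mapsto\phi_I(X_1)\cdots\phi_I(X_n)$ is not a group-like cocycle: the composition law $\phi_{IJ}=\sigma_I(\phi_J)\,\phi_I$ gives $\phi_{IJ}(X)=\sigma_I(\phi_J)\bigl(\phi_I(X)\bigr)$ (a substitution, not a product), so the Euler-product identities that make $\mathcal L(\phi/B;1;z)$ invertible in $\mathbb T_z(H_\infty)[G]$ and drive the local computation in Theorem~\ref{TheoremS6.2} --- in particular the characterization of $U(\widetilde\phi/R_P[[z]])$ via $\widetilde\phi_P\exp_{\widetilde\phi}=\sigma_P(\exp_{\widetilde\phi})\psi(P)$ --- have no direct analogue for your $\widetilde{\mathcal L}_n$. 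The paper's Section~\ref{Section one variable} exists precisely to repair this: replacing $\phi_I(X_j)$ by $\rho_j(u_I)\in W(B)$, where the lemma $u_{IJ}=\sigma_I(u_J)u_I$ \emph{does} restore multiplicativity, and then transporting Corollary~\ref{CorollaryS7.1} back to polynomials via the isomorphism $\gamma\colon f\cdots f^{(i-1)}\mapsto X^{q^i}$, $\rho_j(u_I)\mapsto\phi_I(X_j)$ (this is Corollary~\ref{CorollaryS7.2}). Writing down a multi-variable avatar over $B[X_1,\ldots,X_n]$ and declaring that the proof goes through is where the argument has a genuine gap.

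It may also help to know that the paper's first proof of Theorem~\ref{TheoremS6.4} sidesteps any multi-variable analogue of Theorem~\ref{key result} altogether: one specializes $X_i\mapsto\lambda_i$ for nonzero roots $\lambda_i$ of $\phi_{P_i}$, with $P_1,\ldots,P_n$ distinct unramified primes, and uses the explicit Frobenius action on torsion ($\sigma_Q(\lambda_i)=\phi_Q(\lambda_i)$ for $Q\neq P_i$, $\sigma_{P_i}(\lambda_i)=0$) to reduce to the $n=0$ statement over $E(\lambda_1,\ldots,\lambda_n)$. Integrality of the coefficients $g_m$ of $z^m$ then follows from $O_{E(\lambda_i)}=O_E[\lambda_i]$ (Lemma~\ref{LemmaS6.6}), and vanishing of $g_m$ for $m\gg 0$ from uniform valuation estimates on $\exp_{\phi^\sigma}$ applied to lattice points, rather than from any unit-module description. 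Either route would work, but in both cases the step from $n=0$ to $n\geq 1$ still needs to be carried out in detail rather than claimed.
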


For $n \leq 1$ and $d_\infty=1,$ this theorem was due to G. Anderson (\cite{AND}, Theorem 5.1.1 and \cite{AND2}, Theorem 3):
 $$\forall b\in B, \quad \exp_{\widetilde{\phi}}(\sum_I \frac{\sigma_I(b)}{\psi(I)} z^{\deg I})\in B[z],$$
 $$\forall b\in B, \quad \exp_{\widetilde{\phi}}(\sum_I \frac{\sigma_I(b)}{\psi(I)} \phi_I(X) z^{\deg I})\in B[X, z],$$
where the sum runs through the non-zero ideals of $A.$ Again, this result is now well-understood  when the genus of $K$ is zero (and $d_\infty=1$) due to the recent works of many people (\cite{APT}, \cite{APTR}, \cite{AT}, \cite{ATR}, \cite{TAE2}, \cite{THA} Sections  8.9 and 8.10, and the forthcoming work of M. Papanikolas \cite{PAP}). However, to our knowledge, Anderson's log-algebraicity remains quite mysterious for $g>0$ until now.

\medskip

Thirdly, we present an alternative approach to the previous several variable log-algebraicity theorem (Theorem \ref{log algebraicity}) via Drinfeld's correspondence between Drinfeld modules and shtukas. Using the shtuka function attached to $\phi/B$ via Drinfeld's correspondence, we introduce one variable versions of the previous objects, i.e. the modules of $z$-units and Stark units, the equivariant $A$-harmonic series and the $L$-series (see Section \ref{Section one variable}). We prove an analogue of Theorem \ref{key result} in this one variable context (see Theorem \ref{TheoremS7.1}). More generally, we also obtain a several variable log-algebraicity theorem (see Corollary \ref{CorollaryS7.1}). In the case $g=0$ and $d_\infty=1$, we rediscover the Pellarin's $L$-series \cite{PEL} and its several variable variants studied in \cite{ANDTR2}, \cite{APT},  \cite{APTR}, \cite{ATR}. We deduce from this another proof of Theorem \ref{log algebraicity} (see Section \ref{Section another proof}).

\medskip

Finally, we prove some results concerning the module of Stark units for Drinfeld modules of arbitrary rank in Section \ref{StarkL}. In particular, Theorem \ref{regulator} is still valid for any Drinfeld module.


\section{Notation} \label{Notation}

Let $K/\mathbb F_q$ be a global function field of genus $g$, where $\mathbb F_q$ is a finite field  of characteristic $p$, having $q$ elements ($\mathbb F_q$ is algebraically closed in $K$). We fix a place $\infty$ of $K$ of degree $d_\infty,$ and denote by $A$ the ring of elements of $K$ which are regular outside of $\infty.$  The  completion $K_\infty$ of $K$ at the place $\infty$ has residue field $\mathbb F_\infty$ and comes with the $\infty$-adic valuation $v_\infty : K_\infty \twoheadrightarrow \mathbb Z\cup \{+\infty\}$. We fix an algebraic closure $\overline{K}_\infty$ of $K_\infty$ and still denote by $v_\infty: \mathbb C_\infty \twoheadrightarrow \mathbb Q\cup \{+\infty\}$ the extension of $v_\infty$ to  the completion $\mathbb C_\infty$ of $\overline{K}_\infty$.

\medskip

We will fix a uniformizer $\pi$ of $K_\infty.$ Set $\pi_1=\pi,$ and for $n\geq 2,$ choose $\pi_n\in \overline{K}_\infty^\times$ such that $\pi_n^n=\pi_{n-1}.$ If $z\in \mathbb Q,$ $z=\frac{m}{n!}$ for some $m\in \mathbb Z, n\geq 1,$ we set:
 $$\pi^z := \pi_n^m.$$
Let $\overline{\mathbb F}_q$ be the algebraic closure of $\mathbb F_q$ in $\overline{K}_\infty,$ and let $U_\infty=\{ x\in \overline{K}_\infty, v_\infty(x-1)>0\}.$ Then:
$$\overline{K}_\infty^\times= \pi^{\mathbb Q}\times \overline{\mathbb F}_q^\times\times U_\infty.$$
Therefore, if $x\in \overline{K}_\infty^\times,$ one can write in a unique way:
 $$x=\pi^{v_\infty(x)} \sgn (x) \langle x \rangle, \quad \sgn(x)\in \overline{\mathbb F}_q^\times, \langle x \rangle \in U_\infty.$$

\medskip

Let $\mathcal I(A)$ be the group of non-zero fractional ideals of $A.$ For $I\in \mathcal I(A), I\subset A,$ we set:
 $$\deg I :=\dim_{\mathbb F_q} A/I.$$
Then, the function $\deg$ on non-zero ideals of $A$ extends  into a group homomorphism:
$$\deg: \mathcal I(A)\twoheadrightarrow \mathbb Z.$$
Let's observe that, for $x\in K^\times,$ we have:
 $$\deg(x):= \deg(xA)=-d_\infty v_\infty(x).$$

\medskip

Let $I\in \mathcal I(A),$ then there exists an integer $h \geq 1$ such that $I^h=xA, $ $x\in K^\times.$ We set:
$$\langle I \rangle:= \langle x \rangle^{\frac{1}{h}}\in U_\infty.$$
Then one shows (see \cite{GOS}, Section 8.2) that the map $[\cdot]: \mathcal I(A) \rightarrow \overline{K}_\infty^\times, I\mapsto \langle I \rangle\pi^{-\frac{\deg I}{d_\infty}}$ is a group homomorphism such that:
 $$\forall x\in K^\times, \quad [xA]=\frac{x}{\sgn(x)}.$$
Observe that:
$$\forall I\in \mathcal I(A), \quad \sgn ([I])=1.$$
If $M$ is a finite $A$-module, and ${\rm Fitt}_A (M)$ is the Fitting ideal of $M$, we set:
$$[M]_A:=[{\rm Fitt}_A (M)].$$
Let's observe that, if $0\rightarrow M_1\rightarrow M\rightarrow M_2\rightarrow 0$ is a short exact sequence of finite $A$-modules, then:
$$[M]_A=[M_1]_A[M_2]_A.$$

\medskip

Let $E/K$ be a finite extension, and let $O_E$ be the integral closure of $A$ in $E.$ Let $\mathcal I(O_E)$ be the group of non-zero fractional ideals of $O_E.$ We denote by $N_{E/K}:  \mathcal I(O_E)\rightarrow \mathcal I(A)$ the group homomorphism such that, if $\frak P$ is a maximal ideal of $O_E$ and $P=\frak P\cap A,$ we have:
 $$N_{E/K}(\frak P)=P^{[\frac{O_E}{\frak P}: \frac{A}{P}]}.$$
Note that, if $\frak P=xO_E, x\in E^\times,$ then:
    $$N_{E/K} (\frak P) =N_{E/K} (x)A,$$
where $N_{E/K}: E\rightarrow K$ also denotes the usual norm map.


\section{Stark units and $L$-series attached to Drinfeld modules}\label{StarkL}

\subsection{$L$-series attached to Drinfeld modules}\label{L}${}$

\medskip

Let $E/K$ be a finite extension, and let $O_E$ be the integral closure of $A$ in $E.$ Let $\tau:E\rightarrow E, x\mapsto x^q.$ Let $\rho$ be an Drinfeld $A$-module (or a \textit{Drinfeld module} for short) of rank $r\geq 1$ defined over $O_E,$ i.e. $\rho:A \hookrightarrow O_E\{\tau\}$ is an $\mathbb F_q$-algebra homomorphism such that:
 $$\forall a \in A\setminus\{0\}, \quad \rho_a=\rho_{a,0}+\rho_{a,1}\tau +\cdots +\rho_{a,r\deg a}\tau^{r\deg a},$$
where $ \rho_{a,0}, \ldots, \rho_{a,r\deg a}\in O_E,$ $\rho_{a,0}=a,$ and $\rho_{a,r\deg a}\not =0.$

Let $\frak P$ be a maximal ideal of $O_E,$ we denote by $\rho(O_E/\frak P)$ the finite dimensional $\mathbb F_q$-vector space $O_E/\frak P$ equipped with the structure of $A$-module induced by $\rho.$

\begin{proposition}\label{PropositionS3.1}
The following product converges to a principal unit in $K_\infty^\times$ (i.e. an element in $U_\infty\cap K_\infty^\times$):
$$L_A(\rho/O_E):=\prod_{\frak P} \frac{[\frac{O_E}{\frak P}]_A}{[\rho(\frac{O_E}{\frak P})]_A},$$
where $\frak P$ runs through the maximal ideals of $O_E.$
\end{proposition}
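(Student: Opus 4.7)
The plan is two-fold: first establish that each local factor is a principal unit in $\overline{K}_\infty^\times$, then show the infinite product converges in $K_\infty^\times$.

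For the first part, the key input is the identity $\deg \Fitt_A(M) = \dim_{\mathbb F_q} M$ for every finite $A$-module $M$. This follows from the structure theorem over the Dedekind domain $A$: writing $M \cong \bigoplus_i A/I_i$ gives $\Fitt_A(M) = \prod_i I_i$, while $\dim_{\mathbb F_q}(A/I) = \deg I$ for any nonzero ideal $I$. Applying this to both $A$-module structures on $O_E/\frak P$ (the natural one and the one induced by $\rho$), one sees their Fitting ideals share the common degree $f(\frak P|P)\cdot \deg P$. Since $\sgn([I])=1$ and $v_\infty([I]) = -\deg I/d_\infty$ for every $I \in \mathcal I(A)$, the ratio $[O_E/\frak P]_A/[\rho(O_E/\frak P)]_A$ has $v_\infty = 0$ and sign $1$, and therefore lies in $U_\infty$.

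For convergence, I would group the factors by the prime $P \in \mathrm{MaxSpec}(A)$ lying below $\frak P$. Multiplicativity of $\Fitt_A$ along the $\frak P$-isotypic decomposition of $O_E/PO_E$ recasts the contribution of primes above $P$ as
$$E_P := \prod_{\frak P \mid P} \frac{[O_E/\frak P]_A}{[\rho(O_E/\frak P)]_A} = \frac{[O_E/PO_E]_A}{[\rho(O_E/PO_E)]_A}.$$
The numerator equals $[P^n]$ with $n = [E:K]$, since $O_E/PO_E$ is free of rank $n$ over $A/P$. The denominator will be analyzed through the reduction $\bar\rho \bmod P$: the congruence $\rho_a \equiv a \pmod{\tau}$ combined with the rank-$r$ Drinfeld structure should force $\Fitt_A(\rho(O_E/PO_E))$ to equal $P^n$ modulo a correction whose image under $[\cdot]$ lies in $1 + PA_{(P)}$. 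This simultaneously places $E_P$ in $K_\infty^\times$ and yields $v_\infty(E_P - 1) \geq c \deg P$ for some positive constant $c$. Combined with the elementary fact that the number of primes of $A$ of degree $d$ is $O(q^d)$, convergence of $\prod_P E_P$ in the non-archimedean field $K_\infty^\times$ follows.

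The main obstacle will be the quantitative comparison in the second step: showing that $\Fitt_A(\rho(O_E/PO_E))$ differs from $P^n$ by a factor whose $[\cdot]$-image is $1 + O(P)$. For the rank-one sign-normalized $\phi$ of the main body of the paper, this should follow cleanly from the identities satisfied by the $\psi(I)$-function. For arbitrary rank, it requires a more delicate analysis involving the characteristic polynomial of Frobenius on $\bar\rho$, together with the fact that this polynomial is monic of degree $r$ with coefficients having the expected $P$-adic valuations.
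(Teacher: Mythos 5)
Your first step is sound as far as it goes: matching $\mathbb F_q$-dimensions does force the two Fitting ideals to have equal degree, hence $v_\infty$ of the ratio is $0$ and each factor lies in $U_\infty$. But this only places the factor in $U_\infty \subset \overline K_\infty^\times$, not in $U_\infty\cap K_\infty^\times$; you need both Fitting ideals to be \emph{principal} for that, which is not automatic. The paper gets principality of $\Fitt_A(O_E/\frak P) = N_{E/K}(\frak P)$ from the inclusion $H_A \subset E$ (Goss, Remark 7.1.8.2), and principality of $\Fitt_A\bigl(\rho(O_E/\frak P)\bigr)$ from Gekeler's Theorem 5.1, which produces a monic polynomial $P(X)\in A[X]$ of degree $r'\le r$ with $N_{E/K}(\frak P)=P(0)A$ and $\Fitt_A\bigl(\rho(O_E/\frak P)\bigr)=P(1)A$.

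The real gap is in your convergence step, which you yourself flag as ``the main obstacle.'' The passage from a $P$-adic congruence (``a correction in $1+PA_{(P)}$'') to an $\infty$-adic estimate ($v_\infty(E_P-1)\ge c\deg P$) is not a valid inference: the $P$-adic and $\infty$-adic topologies on $K$ are unrelated, and an element $P$-adically close to $1$ can have arbitrarily negative $v_\infty$. What is actually used in the paper is precisely the characteristic polynomial of Frobenius that you allude to but do not exploit: since $P(X)$ is a power of the minimal polynomial of the Frobenius $F$ of $O_E/\frak P$ (Gekeler, Lemma 3.3) and $K(F)/K$ is \emph{totally imaginary}, all conjugates of $F$ have the common valuation $-\deg(N_{E/K}(\frak P))/(r'd_\infty)$, and then $\frac{(-1)^{r'}P(0)}{P(1)}=\prod_i (1-F_i^{-1})^{-1}$ gives $v_\infty\!\left(\frac{(-1)^{r'}P(0)}{P(1)}-1\right)\ge\frac{\deg(N_{E/K}(\frak P))}{r'd_\infty}$, which is exactly the $\infty$-adic bound driving convergence. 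This Weil-bound/RH input is indispensable, and a $P$-adic congruence cannot substitute for it. Incidentally, your remark that one also needs to count primes of degree $d$ (``$O(q^d)$'') is superfluous: over a non-archimedean field the Euler product $\prod(1+u_\frak P)$ converges as soon as $u_\frak P\to 0$, so the only thing required is the $v_\infty$ bound you did not establish.
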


\begin{proof}
By \cite{GOS}, Remark 7.1.8.2, we have: $H_A\subset E,$ where $H_A/K$ is the maximal unramified abelian extension of $K$ such that $\infty$ splits completely in $H_A.$ Thus $N_{E/K}(\frak P)$ is a principal ideal. Observe that:
$${\rm Fitt}_A\frac{O_E}{\frak P}= N_{E/K}(\frak P).$$
Thus:
$$[\frac{O_E}{\frak P}]_A=[N_{E/K}(\frak P)].$$
By \cite{GEK}, Theorem 5.1, there exists a unitary polynomial $P(X)\in A[X]$ of degree $r'\leq r$ such that:
\begin{align*}
N_{E/K}(\frak P) &=P(0)A, \\
{\rm Fitt}_A\rho(\frac{O_E}{\frak P})&=P(1)A, \\
v_\infty(\frac{(-1)^{r'}P(0)}{P(1)}-1) &\geq \frac{\deg (N_{E/K}(\frak P))}{r'd_\infty} .
\end{align*}
This last assertion comes from the fact that $P(X)$ is a power of the minimal polynomial over $K$ of the Frobenius $F$ of $\frac{O_E}{\frak P}$  (see \cite{GEK}, Lemma 3.3), and that $K(F)/K$ is totally imaginary (i.e. there exists a unique place of $K(F)$ over $\infty$). By the properties of $[\cdot]$ (see Section \ref{Notation}), we have:
$$\frac{[\frac{O_E}{\frak P}]_A}{[\rho(\frac{O_E}{\frak P})]_A}= \frac{(-1)^{r'}P(0)}{P(1)}.$$
The proposition follows.
\end{proof}

\begin{remark} \label{RemarkS3.1}
The element $L_A(\rho/O_E) \in K_\infty^\times$ is called \textit{the $L$-series} attached to $\rho/O_E.$ By the proof of Proposition \ref{PropositionS3.1}, $L_A(\rho/O_E)$ depends on $A, \rho$ and $O_E,$ but not on the choice of $\pi.$
\end{remark}

Let $F/K$ be a finite extension with $F\subset E,$ and such that  there exists a unique place of $F$ above $\infty$ (still denoted by $\infty$). Let $A'$ be the integral closure of $A$ in $F,$ then $A'$ is the set of elements in $F$ which are regular outside $\infty.$ We assume that $\rho$ extends into a Drinfeld $A'$-module: $\rho: A'\hookrightarrow O_E\{\tau\}.$ Let $[\cdot]_{A'}: \mathcal I(A')\rightarrow \overline{K}_\infty^\times$ be the map constructed as in Section \ref{Notation} with the help of the choice of a uniformizer $\pi'\in F_\infty^\times.$ Let $N_{F_\infty/K_\infty}: F_\infty \rightarrow K_\infty$ be the usual norm map.

\begin{corollary}\label{CorollaryS3.1}
We have:
 $$N_{F_\infty/K_\infty} (L_{A'}(\rho/O_E))=L_A(\rho/O_E).$$
\end{corollary}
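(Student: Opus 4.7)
My plan is to pass $N_{F_\infty/K_\infty}$ through the convergent Euler product for $L_{A'}(\rho/O_E)$ and match each local factor against the corresponding factor of $L_A(\rho/O_E)$. Since $N_{F_\infty/K_\infty}\colon F_\infty\to K_\infty$ is continuous and multiplicative, and since by Proposition \ref{PropositionS3.1} (applied to $A'$) the local factors of $L_{A'}(\rho/O_E)$ converge rapidly to $1$ in $F_\infty$, I would exchange product and norm to reduce the claim to the local identity
$$N_{F_\infty/K_\infty}\!\left(\frac{[O_E/\frak P]_{A'}}{[\rho(O_E/\frak P)]_{A'}}\right)=\frac{[O_E/\frak P]_A}{[\rho(O_E/\frak P)]_A}$$
for every maximal ideal $\frak P$ of $O_E$.

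To attack this local identity, I would invoke Gekeler's Theorem~5.1 (as used in the proof of Proposition \ref{PropositionS3.1}) in both the $A$- and $A'$-settings. This yields monic polynomials $P_A(X)\in A[X]$ and $P_{A'}(X)\in A'[X]$ of degrees $r'_A$ and $r'_{A'}$, respectively, together with the closed forms
$$\frac{[O_E/\frak P]_A}{[\rho(O_E/\frak P)]_A}=\frac{(-1)^{r'_A}P_A(0)}{P_A(1)},\qquad \frac{[O_E/\frak P]_{A'}}{[\rho(O_E/\frak P)]_{A'}}=\frac{(-1)^{r'_{A'}}P_{A'}(0)}{P_{A'}(1)}.$$
The heart of the argument is the identity $P_A(X)=N_{F/K}(P_{A'}(X))$ in $A[X]$, where $N_{F/K}$ denotes the polynomial norm $F[X]\to K[X]$ induced by the field extension; this implies the degree match $r'_A=[F:K]\cdot r'_{A'}$ and the identities $P_A(c)=N_{F/K}(P_{A'}(c))$ for $c\in K$. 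The conceptual reason is that each Gekeler polynomial is a power of the characteristic polynomial of the Frobenius endomorphism of $\rho(O_E/\frak P)$ on an $\ell$-adic Tate module of $\rho$, and the change from $A'$- to $A$-module structure on this Tate module transforms characteristic polynomials via the determinant formula for restriction of scalars along $A_\ell\hookrightarrow A'_\ell$.

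To finish, I would use the fact that $\infty$ has a unique place of $F$ above it, so $F\otimes_K K_\infty\to F_\infty$ is an isomorphism and $N_{F_\infty/K_\infty}$ restricted to $F^\times$ coincides with the algebraic norm $N_{F/K}$. Substituting this into the local factor gives
$$N_{F_\infty/K_\infty}\!\left(\frac{(-1)^{r'_{A'}}P_{A'}(0)}{P_{A'}(1)}\right)=\frac{(-1)^{r'_{A'}[F:K]}P_A(0)}{P_A(1)}=\frac{(-1)^{r'_A}P_A(0)}{P_A(1)},$$
which is exactly the $A$-local factor. The hard part will be verifying $P_A=N_{F/K}(P_{A'})$ with the precise normalizations from Gekeler's theorem rather than merely up to a unit: this should be obtained by combining the uniqueness/characterization of $P$ in Gekeler's result (determined by $N_{E/K}(\frak P)$, $\mathrm{Fitt}_A\rho(O_E/\frak P)$, and the valuation bound) with the Tate-module description of both polynomials, noting in particular the transitivity $N_{E/K}=N_{F/K}\circ N_{E/F}$ of norms of ideals.
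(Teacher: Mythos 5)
Your opening moves match the paper's proof exactly: use the continuity of $N_{F_\infty/K_\infty}$ and the rapid convergence of the Euler factors (from the proof of Proposition~\ref{PropositionS3.1}) to pull the norm inside the product, then observe that since each factor $\frac{[O_E/\frak P]_{A'}}{[\rho(O_E/\frak P)]_{A'}}$ lies in $F^\times$, the topological norm $N_{F_\infty/K_\infty}$ agrees with the algebraic norm $N_{F/K}$ on it. The divergence is in how you establish the local identity $N_{F/K}\bigl(\frac{[O_E/\frak P]_{A'}}{[\rho(O_E/\frak P)]_{A'}}\bigr)=\frac{[O_E/\frak P]_A}{[\rho(O_E/\frak P)]_A}$, and this is where your proposal has a genuine gap.

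You propose to prove the polynomial identity $P_A(X)=N_{F/K}(P_{A'}(X))$ between the two Gekeler polynomials via restriction of scalars on Tate modules, and you acknowledge that this is the hard step without carrying it out. Even if this identity holds (with correct normalizations and degrees --- note the rank of $\rho$ as an $A'$-module is $r/[F:K]$ rather than $r$, so the degrees differ by a factor of $[F:K]$, which needs to be tracked), the argument imports heavy machinery that the paper never invokes. The paper instead uses the elementary commutative-algebra fact that for a finite $A'$-module $M$ one has $N_{F/K}(\mathrm{Fitt}_{A'}M)=\mathrm{Fitt}_A M$ (a local length computation over the maximal ideals of $A'$), applied to $M=O_E/\frak P$ and $M=\rho(O_E/\frak P)$. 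This shows at once that $N_{F/K}$ of the $A'$-ratio and the $A$-ratio are two generators of the same fractional ideal of $A$. Since both are principal units (again by the proof of Proposition~\ref{PropositionS3.1} and the fact that $N_{F/K}$ of a principal unit is a principal unit), and two principal-unit generators of a fractional ideal of $A$ must coincide (their quotient is a unit of $A$, hence in $\mathbb F_q^\times$, hence $=1$ if it is a principal unit), the local identity follows without any reference to Gekeler polynomials beyond what was already used in Proposition~\ref{PropositionS3.1}. You should replace the Tate-module step by this Fitting-ideal norm argument, or at minimum fill in the proof of $P_A=N_{F/K}(P_{A'})$; as written, the proposal does not close the central local identity.
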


\begin{proof}
Recall that:
 $$L_{A'}(\rho/O_E):=\prod_{\frak P} \frac{[\frac{O_E}{\frak P}]_{A'}}{[\rho(\frac{O_E}{\frak P})]_{A'}},$$
where $\frak P$ runs through the maximal ideals of $O_E.$ Since $N_{F_\infty/K_\infty}$ is continuous, we get by the proof of Proposition \ref{PropositionS3.1}:
 $$N_{F_\infty/K_\infty} (L_{A'}(\rho/O_E))=\prod_{\frak P} N_{F_\infty/K_\infty}(\frac{[\frac{O_E}{\frak P}]_{A'}}{[\rho(\frac{O_E}{\frak P})]_{A'}}).$$
Let $\frak P$ be a maximal ideal of $O_E.$ Since $\frac{[\frac{O_E}{\frak P}]_{A'}}{[\rho(\frac{O_E}{\frak P})]_{A'}}\in F^\times,$ we get:
 $$N_{F_\infty/K_\infty}(\frac{[\frac{O_E}{\frak P}]_{A'}}{[\rho(\frac{O_E}{\frak P})]_{A'}})=N_{F/K}(\frac{[\frac{O_E}{\frak P}]_{A'}}{[\rho(\frac{O_E}{\frak P})]_{A'}}).$$
But, observe that if $M$ is a finite $A'$-module, we have:
 $$N_{F/K}({\rm Fitt}_{A'} M)= {\rm Fitt}_{A} M.$$
By the proof of Proposition \ref{PropositionS3.1}, $\frac{[\frac{O_E}{\frak P}]_{A'}}{[\rho(\frac{O_E}{\frak P})]_{A'}}$ is a principal unit in $F_\infty^\times,$ and therefore $N_{F/K}(\frac{[\frac{O_E}{\frak P}]_{A'}}{[\rho(\frac{O_E}{\frak P})]_{A'}})$ is also a principal unit in $K_\infty^\times.$ Again, by the proof of Proposition \ref{PropositionS3.1}, we get:
 $$N_{F/K}(\frac{[\frac{O_E}{\frak P}]_{A'}}{[\rho(\frac{O_E}{\frak P})]_{A'}})= \frac{[\frac{O_E}{\frak P}]_{A}}{[\rho(\frac{O_E}{\frak P})]_{A}}.$$
The corollary follows.
\end{proof}


\subsection{Stark units and the Taelman class module}\label{S}${}$

\medskip

Let $E/K$ be a finite extension of degree $n$, and let $O_E$ be the integral closure of $A$ in $E.$ Set:
$$E_\infty= E\otimes_KK_\infty.$$
Let $M$ be an $A$-module, $M\subset E_\infty,$ we say that  $M$ is an $A$-lattice in $E_\infty$ if $M$ is discrete and cocompact in $E_\infty.$ Observe that if $M$ is an $A$-lattice in $E_\infty,$ then there exist $e_1, \ldots, e_n \in E_\infty$ (recall that $n=[E:K]$) such that $E_\infty=\oplus_{i=1}^n K_\infty e_i,$ $N:= \oplus _{i=1}^nAe_i \subset M$ and $\frac{M}{N}$ is a finite $A$-module. Note also that $O_E$ is an $A$-lattice in $E_\infty.$\par

Let $\tau: E_\infty\rightarrow E_\infty, x\mapsto x^q.$ Let $\rho: A\hookrightarrow O_E\{\tau\}$ be a Drinfeld module of rank $r\geq 1.$ Then, there exist unique elements $\exp_\rho, \log_\rho \in E\{\{\tau\}\}$ such that
 $$\exp_\rho,\log_\rho \in 1+ E\{\{\tau\}\}\tau,$$
 $$\forall a\in A, \quad \exp_\rho a= \rho_a\exp_\rho,$$
 $$\exp_\rho \log_\rho=\log_\rho \exp_\rho =1.$$
The formal series $\exp_\rho,$ and $\log_\rho$ are respectively called \textit{the exponential series} and \textit{the logarithm series} associated to $\rho/O_E.$ We will write:
 $$\exp_\rho=\sum_{i\geq 0} e_i(\rho) \tau ^i,$$
 $$\log_\rho=\sum_{i\geq 0} l_i(\rho) \tau ^i,$$
with $e_i(\rho), l_i(\rho)\in E.$ Moreover, $\exp_\rho$ converges on $E_\infty$ (see \cite{GOS}, proof of Theorem 4.6.9).

\begin{definition} \label{unit module}
We define \textit{the Taelman module of units} associated to $\rho/O_E$ as follows:
 $$U(\rho/O_E)=\{ x\in E_\infty, \exp_\rho (x)\in O_E\}.$$
\end{definition}

Then, as a consequence of \cite{TAE1}, Theorem 1, the $A$-module $U(\rho/O_E)$ is an $A$-lattice in $E_\infty.$

\begin{definition} \label{class module}
We define \textit{the Taelman class module} associated to $\rho/O_E$ by:
 $$H(\rho/O_E)= \frac{E_\infty}{O_E+\exp_\rho (E_\infty)}.$$
\end{definition}

Note that $H(\rho/O_E)$ is an $A$-module via $\rho,$ and by \cite{TAE1}, Theorem 1, $H(\rho/O_E)$ is a finite $A$-module.\par
Let $z$ be an indeterminate over $K_\infty,$ and let $\mathbb T_z(K_\infty)$ be the Tate algebra in the variable $z$ with coefficients in $K_\infty.$ We set:
 $$\mathbb T_z(E_\infty)=E\otimes_K\mathbb T_z(K_\infty).$$
Observe that $E_\infty\subset \mathbb  T_z(E_\infty),$ and $\mathbb T_z(E_\infty)$ is a free $\mathbb T_z(K_\infty)$-module of rank $[E:K].$ Let $\tau: \mathbb  T_z(E_\infty)\rightarrow \mathbb T_z(E_\infty)$ be the continuous $\mathbb F_q[z]$-algebra homomorphism such that:
 $$\forall x\in E_\infty, \quad \tau (x)=x^q.$$
Let $\ev:\mathbb T_z(E_\infty)\twoheadrightarrow E_\infty$ be the surjective $E_\infty$-algebra homomorphism given by:
 $$\forall f\in \mathbb T_z(E_\infty), \quad \ev(f)=f\mid_{z=1}.$$
We have: ${\ker}\,  \ev= (z-1)\mathbb T_z(E_\infty),$ and:
 $$\forall f\in \mathbb T_z(E_\infty), \quad \ev(\tau (f))=\tau (\ev(f)).$$
Recall that:
 $$\exp_\rho=\sum_{i\geq 0} e_i(\rho) \tau ^i, \quad \text{with } e_i(\rho)\in E.$$
We set:
 $$\exp_{\widetilde{\rho}}= \sum_{i\geq 0} e_i(\rho) z^i \tau ^i\in E[z]\{\{\tau\}\}.$$
Observe that $\exp_{\widetilde{\rho}}$ converges on $\mathbb T_z(E_\infty),$ and:
 $$\forall f\in \mathbb T_z(E_\infty), \quad \ev(\exp_{\widetilde{\rho}}(f))= \exp_\rho (\ev(f)).$$
Let $\widetilde{\rho}:A\hookrightarrow O_E[z]\{\tau\}$ be the $\mathbb F_q$-algebra homomorphism given by:
$$\forall a \in A, \quad \widetilde{\rho}_a=a+\rho_{a,1} z\tau +\cdots +\rho_{a,r\deg a}z^{r\deg a}\tau^{r\deg a},$$
where $\rho_a=a+\rho_{a,1}\tau +\cdots +\rho_{a,r\deg a}\tau^{r\deg a}.$ Then:
 $$\forall a\in A, \quad \exp_{\widetilde{\rho}} a= \widetilde{\rho}_a  \exp_{\widetilde{\rho}}.$$

\begin{definition} \label{Stark units}
\textit{The module of $z$-units} associated to $\rho/O_E$ is defined by:
 $$U(\widetilde{\rho}/O_E[z])= \{ f\in \mathbb T_z(E_\infty), \exp_{\widetilde{\rho}}(f)\in O_E[z]\}.$$
And \textit{the module of Stark units} associated to $\rho/O_E$ is defined by:
 $$U_{St}(\rho/O_E) := \ev(U(\widetilde{\rho}/O_E[z])).$$
\end{definition}

Observe that $U_{St}(\rho/O_E)\subset U(\rho/O_E).$

\begin{theorem}\label{TheoremS4.1}
The $A$-module $U_{St}(\rho/O_E)$ is an $A$-lattice in $E_\infty.$ Furthermore:
$$[\frac{U(\rho/O_E)}{U_{St}(\rho/O_E)}]_A=[H(\rho/O_E)]_A.$$
\end{theorem}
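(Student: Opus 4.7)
The plan is to compare the $z$-deformed setting with its specialization at $z=1$ via the snake lemma, and then reduce both the lattice property and the Fitting-ideal identity to an abstract lemma about finite $A[z]$-modules.

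First I would introduce the $z$-analogue of the Taelman class module,
$$H(\widetilde{\rho}/O_E[z]) := \mathbb{T}_z(E_\infty)\bigm/\bigl(O_E[z] + \exp_{\widetilde{\rho}}(\mathbb{T}_z(E_\infty))\bigr),$$
together with the maps $\Psi(f) = \exp_{\widetilde{\rho}}(f) \bmod O_E[z]$ and $\Psi_0(x) = \exp_\rho(x) \bmod O_E$. Since $\exp_{\widetilde{\rho}}$ is $\mathbb{F}_q[z]$-linear (as $\tau$ is $\mathbb{F}_q[z]$-linear and $z$ commutes with $\tau$), there is a commutative diagram with exact rows
$$
\begin{array}{ccccccccc}
0 & \to & \mathbb{T}_z(E_\infty) & \xrightarrow{z-1} & \mathbb{T}_z(E_\infty) & \xrightarrow{\ev} & E_\infty & \to & 0 \\
 & & \downarrow \Psi & & \downarrow \Psi & & \downarrow \Psi_0 & & \\
0 & \to & \mathbb{T}_z(E_\infty)/O_E[z] & \xrightarrow{z-1} & \mathbb{T}_z(E_\infty)/O_E[z] & \xrightarrow{\ev} & E_\infty/O_E & \to & 0.
\end{array}
$$
The crucial input, both for exactness of the bottom row and for the later identification of $\ker(\ev|_{U(\widetilde{\rho}/O_E[z])})$ with $(z-1)U(\widetilde{\rho}/O_E[z])$, is that $(z-1)h \in O_E[z]$ with $h \in \mathbb{T}_z(E_\infty)$ forces $h \in O_E[z]$: evaluating at $z=1$ shows the polynomial $(z-1)h$ vanishes there, and the factor theorem then lets one divide by $z-1$ while keeping the quotient in $O_E[z]$.

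Running the snake lemma yields the six-term exact sequence
$$0 \to U(\widetilde{\rho}/O_E[z]) \xrightarrow{z-1} U(\widetilde{\rho}/O_E[z]) \xrightarrow{\ev} U(\rho/O_E) \xrightarrow{\delta} H(\widetilde{\rho}/O_E[z]) \xrightarrow{z-1} H(\widetilde{\rho}/O_E[z]) \xrightarrow{\ev} H(\rho/O_E) \to 0.$$
Combining this with $\mathrm{im}(\ev|_{U(\widetilde{\rho}/O_E[z])}) = U_{St}(\rho/O_E)$ (by definition) and with the $\ker$ identification above, I would read off the two $A$-module isomorphisms
$$U(\rho/O_E)/U_{St}(\rho/O_E) \,\cong\, H(\widetilde{\rho}/O_E[z])[z-1], \qquad H(\widetilde{\rho}/O_E[z])/(z-1) \,\cong\, H(\rho/O_E),$$
where $M[z-1]$ denotes the $(z-1)$-torsion of $M$. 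The theorem then reduces to the finiteness of $H(\widetilde{\rho}/O_E[z])[z-1]$ (which forces $U_{St}(\rho/O_E)$ to be a finite-index submodule of the Taelman lattice $U(\rho/O_E)$, hence itself a lattice) together with the identity $[H(\widetilde{\rho}/O_E[z])[z-1]]_A = [H(\widetilde{\rho}/O_E[z])/(z-1)]_A$.

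To conclude, I would invoke the abstract lemma: for any finite $A[z]$-module $N$, $[N[z-1]]_A = [N/(z-1)N]_A$, which follows immediately from the multiplicativity of Fitting ideals applied to the two short exact sequences extracted from $0 \to N[z-1] \to N \xrightarrow{z-1} N \to N/(z-1)N \to 0$. Since $M := H(\widetilde{\rho}/O_E[z])$ is a priori not a finite $A$-module, I would reduce to the lemma by constructing a finite $A[z]$-submodule $N \subset M$ with $N[z-1] = M[z-1]$ and $N$ surjecting onto $M/(z-1)M$. The main obstacle is the existence and finiteness of such an $N$: I expect to build it by a compactness argument in $\mathbb{T}_z(E_\infty)$ patterned on Taelman's proof that $H(\rho/O_E)$ is finite, exploiting the local bijectivity of $\exp_{\widetilde{\rho}}$ near $0$ (via $\log_{\widetilde{\rho}}$) to absorb large elements into $\exp_{\widetilde{\rho}}(\mathbb{T}_z(E_\infty)) + O_E[z]$ and leave only finitely generated residual classes.
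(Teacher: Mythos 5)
Your proposal takes essentially the same route as the paper: introduce the $z$-deformed class module $H(\widetilde\rho/O_E[z])$, run the multiplication-by-$(z-1)$ exact sequence through a snake-lemma/Fitting-ideal argument, and identify the connecting map with the obstruction to $U_{St}(\rho/O_E)=U(\rho/O_E)$; your $\delta$ is exactly the map $\bar\alpha(x)=\bigl(\exp_{\widetilde\rho}(x)-\exp_\rho(x)\bigr)/(z-1)$ that the paper constructs explicitly. Two small remarks. First, the paper delegates the one genuinely non-formal input --- finiteness of $H(\widetilde\rho/O_E[z])$ --- to \cite{ATR}, Proposition 2 (a Taelman-style compactness argument, as you correctly anticipate), so that part of your plan is a re-derivation rather than a gap. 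Second, your detour through a finite $A[z]$-submodule $N\subset M$ is unnecessary: once $H(\widetilde\rho/O_E[z])$ is shown to be a finite $\mathbb F_q[z]$-module (finite cardinality), it is automatically a finite $A$-module, and your abstract lemma $[M[z-1]]_A=[M/(z-1)M]_A$ applies directly to $M$ itself, giving the stated identity and, via the finiteness of $U(\rho/O_E)/U_{St}(\rho/O_E)\cong M[z-1]$, the lattice property of $U_{St}(\rho/O_E)$.
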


\begin{proof}
This is a consequence of the proof of \cite{ATR}, Theorem 1.  For the convenience of the reader, we give a sketch of the proof. Let's set:
 $$H(\widetilde{\rho}/O_E[z])= \frac{\mathbb T_z (E_\infty)}{O_E[z]+\exp_{\widetilde{\rho}}(\mathbb T_z(E_\infty))}.$$
Observe that $H(\widetilde{\rho}/O_E[z])$ is an $A[z]$-module via $\widetilde{\rho},$ and furthermore $H(\widetilde{\rho}/O_E[z])$ is a finite $\mathbb F_q[z]$-module (\cite{ATR}, Proposition 2). Let's set:
 $$V=\{ x\in H(\widetilde{\rho}/O_E[z]), (z-1)x=0\}.$$
Since ${\ker}\,  \ev = (z-1)\mathbb T_z(E_\infty),$ the multiplication by $z-1$ on $H(\widetilde{\rho}/O_E)$ gives rise to an exact sequence of finite $A$-modules:
 $$0\rightarrow V\rightarrow H(\widetilde{\rho}/O_E[z])\rightarrow H(\widetilde{\rho}/O_E[z])\rightarrow H(\rho/O_E)\rightarrow 0.$$
Thus:
 $${\rm Fitt}_A V= {\rm Fitt}_A H(\rho/O_E).$$
Now, let's consider the homomorphism of $\mathbb F_q[z]$-modules $\alpha: \mathbb T_z(E_\infty) \rightarrow \mathbb T_z(E_\infty)$ given by:
 $$\forall x\in \mathbb T_z(E_\infty), \quad \alpha (x) =\frac{\exp_{\widetilde{\rho}}(x)-\exp_\rho(x)}{z-1}.$$
Observe that:
 $$(z-1)\alpha (U(\rho/O_E))\subset O_E+\exp_{\widetilde{\rho}}(\mathbb T_z(E_\infty)),$$
 $$\forall a\in A, \forall x\in U(\rho (O_E)), \quad \alpha (ax)-\widetilde{\rho}_a(\alpha(x)) \in O_E[z].$$
Thus $\alpha$ induces a homomorphism of $A$-modules:
 $$\bar {\alpha} : U(\rho/O_E)\rightarrow V.$$
By \cite{ATR}, Proposition 3, this homomorphism is surjective and its kernel is precisely $U_{St}(\rho/O_E).$ The theorem follows.
\end{proof}


\subsection{Co-volumes}\label{Prel}${}$


\medskip


Let $V$ be a finite dimensional $K_\infty$-vector space of dimension $n \geq 1.$ An $A$-lattice in $V$ is a discrete and cocompact sub-$A$-module of $V.$

\begin{lemma}\label{LemmaS5.1}
Let $M, N$ be two $A$-lattices in $V.$ Then there exists an isomorphism of $K_\infty$-vector spaces $\sigma: V\rightarrow V$ such that:
$$\sigma (M)\subset N.$$
\end{lemma}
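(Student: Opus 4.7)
The plan is to reduce to the structural description of $A$-lattices that is recalled at the beginning of Section \ref{S}: for any $A$-lattice $L$ in an $n$-dimensional $K_\infty$-vector space $V$, there is a $K_\infty$-basis $e_1,\dots,e_n$ of $V$ with $L_0 := \bigoplus_{i=1}^n A e_i \subset L$ and $L/L_0$ finite. Apply this twice, once to $M$ and once to $N$: choose $K_\infty$-bases $(e_i)_{i=1}^n$ and $(f_i)_{i=1}^n$ of $V$ with
$$ \bigoplus_{i=1}^n A e_i \ \subset \ M, \qquad \bigoplus_{i=1}^n A f_i \ \subset \ N, $$
each inclusion having finite cokernel.

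Next I would observe that the finite $A$-module $M/\bigoplus A e_i$ is annihilated by some nonzero $b\in A$. Indeed, for every element $\bar m$ of this finite module, the cyclic submodule $A\bar m$ is finite, hence $\bar m$ is killed by some nonzero $b_{\bar m}\in A$; taking the product of such annihilators over the (finitely many) elements yields $b\in A\setminus\{0\}$ with $bM \subset \bigoplus A e_i$.

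Then I would define the $K_\infty$-linear map $\sigma: V\to V$ by $\sigma(e_i)=b f_i$ for $i=1,\dots,n$. Since $b\neq 0$ and $(f_i)$ is a $K_\infty$-basis, $\sigma$ is a $K_\infty$-vector space isomorphism. To check $\sigma(M)\subset N$, take $m\in M$; writing $bm=\sum_{i=1}^n a_i e_i$ with $a_i\in A$, one has
$$ b\,\sigma(m) \ = \ \sigma(bm) \ = \ \sum_{i=1}^n a_i\, b f_i, $$
so $\sigma(m) = \sum_{i=1}^n a_i f_i \in \bigoplus_{i=1}^n A f_i \subset N$, as required.

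The argument is entirely formal, so there is no real obstacle: the only step that uses anything beyond definitions is the existence of a nonzero annihilator $b\in A$ of the finite $A$-module $M/\bigoplus A e_i$, which is immediate from finiteness. The rest is simply the linear algebra dictated by the requirement that $\sigma$ be $K_\infty$-linear, rather than only $A$-linear.
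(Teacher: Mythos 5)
Your proof is correct, and it takes a genuinely more elementary route than the paper's. The paper invokes the structure theory of $A$-lattices over a Dedekind domain to write $M=\bigoplus_{j=1}^{n-1}Ae_j\oplus Ie_n$ and $N=\bigoplus_{j=1}^{n-1}Af_j\oplus Jf_n$ exactly, for suitable nonzero ideals $I,J$ and $K_\infty$-bases $(e_j),(f_j)$; it then picks $x\in I^{-1}J\setminus\{0\}$ and sends $e_j\mapsto f_j$ for $j<n$, $e_n\mapsto xf_n$. You instead use only the weaker (and explicitly recalled) fact that each lattice \emph{contains} a free $A$-submodule $\bigoplus Ae_i$ of full rank with finite quotient, clear denominators by a single nonzero annihilator $b\in A$ of $M/\bigoplus Ae_i$, and set $\sigma(e_i)=bf_i$. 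Both approaches are sound; yours is shorter and avoids the classification of torsion-free modules over a Dedekind domain. The one thing the paper's proof buys that yours does not is the closing observation, used nowhere in the lemma's statement but mentioned at the end, that if $M$ and $N$ are isomorphic as $A$-modules then $x$ can be taken so that $\sigma(M)=N$ exactly; your $b$ is generally far from optimal and need not produce equality. For the lemma as stated, though, your argument is complete.
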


\begin{proof}
Since $A$ is a Dedekind domain, there exist two non-zero ideals $I,J$ of $A,$ and two $K_\infty$-basis $\{e_1, \ldots e_n\}, \{f_1, \ldots, f_n\}$ of $V,$ such that:
 $$M=\oplus_{j=1}^{n-1} Ae_j\oplus Ie_n,$$
 $$N=\oplus_{j=1}^{n-1} Af_j\oplus Jf_n.$$
Furthermore, $M$ and $N$ are isomorphic as $A$-modules if and only if $I$ and $J$ have the same class in the ideal class group ${\rm Pic}(A)$ of $A$. Let $x\in I^{-1}J\setminus\{0\}.$ Let $\sigma: V\rightarrow V$ such that:
\begin{align*}
\sigma(e_j) &=f_j, \quad j=1, \ldots, n-1, \\
\sigma (e_n) &= xf_n.
\end{align*}
Then:
 $$\sigma (M)\subset N.$$
Note that if $M$ and $N$ are isomorphic $A$-modules then we can select $x\in K^\times$ such that $I^{-1}J= xA$ and in this case $\sigma (M)=N.$
\end{proof}

\begin{lemma}\label{LemmaS5.2}
Let $M, N$ be two $A$-lattices in $V.$ Let  $\sigma_1, \sigma_2: V\rightarrow V$ be two isomorphisms of $K_\infty$-vector spaces such that $\sigma_i (M)\subset N, i=1,2.$ Then:
$$\frac{\det_{K_\infty} \sigma_1}{\sgn (\det_{K_\infty} \sigma_1)} [\frac{N}{\sigma_1(M)}]_A^{-1}=\frac{\det_{K_\infty} \sigma_2}{\sgn (\det_{K_\infty} \sigma_2)} [\frac{N}{\sigma_2(M)}]_A^{-1}.$$
\end{lemma}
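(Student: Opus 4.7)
The plan is to show that the expression $(\det_{K_\infty} \sigma)/\sgn(\det_{K_\infty} \sigma) \cdot [N/\sigma(M)]_A^{-1}$ depends only on the pair $(M,N)$ and not on $\sigma$. Writing $\lambda = \sigma_2 \circ \sigma_1^{-1}$, a $K_\infty$-linear automorphism of $V$, the equality to prove reduces (by multiplicativity of $\det_{K_\infty}$ and of $\sgn$) to
$$\frac{[N/\sigma_2(M)]_A}{[N/\sigma_1(M)]_A} = \frac{\det_{K_\infty} \lambda}{\sgn(\det_{K_\infty} \lambda)}.$$
Set $L_1 = \sigma_1(M)$ and $L_2 = \lambda(L_1) = \sigma_2(M)$, two $A$-lattices in $V$ contained in $N$.

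First I would pick $a \in A \setminus \{0\}$ with $aL_1 \subset L_2$; this exists because $L_1$ and $L_2$ are both full-rank $A$-lattices in $V$, so every generator of $L_1$ has a common "denominator" landing in $L_2$. Then $aL_1 \subset L_1 \cap L_2$, and the multiplicativity of $[\cdot]_A$ along the two short exact sequences
$$0 \to L_i/aL_1 \to N/aL_1 \to N/L_i \to 0 \qquad (i = 1,2)$$
yields
$$\frac{[N/L_2]_A}{[N/L_1]_A} = \frac{[L_1/aL_1]_A}{[L_2/aL_1]_A}.$$

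The heart of the argument is the computation of the two Fitting ideals on the right, which I would carry out via the following standard fact: any $A$-lattice $L$ in $V$ is a finitely generated projective $A$-module of rank $n = \dim_{K_\infty} V$, and for any injective $A$-linear endomorphism $\phi$ of $L$, one has $\Fitt_A(L/\phi(L)) = \det(\phi) \cdot A$, where $\det(\phi) \in A$ is the scalar by which $\phi$ acts on the invertible $A$-module $\Lambda^n L$; moreover this $\det(\phi)$ coincides with $\det_{K_\infty}\phi$ after base change to $K_\infty$. Applied to $\phi = a \cdot \mathrm{id}$ on $L_1$, this gives $[L_1/aL_1]_A = [a^n A]$. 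Applied to $\phi = a \lambda^{-1}$ on $L_2$ (which preserves $L_2$, since $a \lambda^{-1}(L_2) = aL_1 \subset L_2$), it both forces $\det_{K_\infty} \lambda \in K^\times$ (because $a^n/\det_{K_\infty} \lambda$ must lie in $A$) and gives $[L_2/aL_1]_A = [(a^n/\det \lambda) \cdot A]$. Using $[xA] = x/\sgn(x)$ for $x \in K^\times$, the ratio collapses to $\det \lambda/\sgn(\det \lambda)$, completing the argument.

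The main obstacle is the Fitting-ideal/determinant formula for projective $A$-lattices over the Dedekind domain $A$, together with the slightly subtle observation that $\det_{K_\infty} \lambda$ actually lies in $K^\times$ despite $\lambda$ being only $K_\infty$-linear. Once this is in hand, the bookkeeping with $\sgn$ and with multiplicativity in short exact sequences is routine. An alternative route is to pick Steinitz decompositions of $L_1$ and $L_2$ and write $\lambda$ as an explicit matrix, but this rediscovers the same determinant calculation in less invariant form.
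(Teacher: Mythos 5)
Your proof is correct, and it shares the paper's broad strategy: normalize to a single automorphism $\lambda$ (the paper works with $\sigma = \sigma_1\sigma_2^{-1} = \lambda^{-1}$), clear denominators with an auxiliary $a \in A$, and exploit multiplicativity of $[\cdot]_A$ across exact sequences of finite $A$-modules. The middle of the argument is packaged differently, though. The paper introduces $U = \frac{1}{a}\sigma_2(M) \cap N$ and compares two four-term exact sequences obtained from the maps $a$ and $a\sigma$ acting on $N/\sigma_2(M)$, then invokes $[\frac{N}{a\sigma(N)}]_A = \frac{\det_{K_\infty}(a\sigma)}{\sgn(\det_{K_\infty}(a\sigma))}$ to finish. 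You instead interpose the common sublattice $aL_1 \subset L_1 \cap L_2$, reducing the target ratio to $[L_1/aL_1]_A / [L_2/aL_1]_A$, and evaluate each term via the identity $\Fitt_A(L/\phi(L)) = \det(\Lambda^n\phi)\cdot A$ for an injective $A$-linear endomorphism $\phi$ of a rank-$n$ projective $A$-lattice $L$ (which holds by localizing at the primes of the Dedekind ring $A$), applied once with $\phi = a$ and once with $\phi = a\lambda^{-1}$. This is the same determinant--Fitting-ideal fact the paper leans on tacitly for $[\frac{N}{a\sigma(N)}]_A$; making it the explicit crux, as you do, keeps all your exact sequences short, and it lets the rationality $\det_{K_\infty}\lambda \in K^\times$ drop out as a byproduct rather than being left implicit. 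The two routes give the same result with comparable effort; yours is marginally more self-contained because the determinant lemma over the Dedekind domain $A$ is stated as the key step rather than absorbed into the final line.
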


\begin{proof}
Let $\sigma=\sigma_1\sigma_2^{-1}.$ Since $\sigma(\sigma_2(M))=\sigma_1(M)\subset N$, with $\sigma_2(M)\subset N$, we can find $a\in A$ with $\sgn a =1$ such that $a\sigma(N)\subset N$. Set $U = \frac 1a \sigma_2(M) \cap N$.
Then the multiplication by $a$ induces an exact sequence of finite $A$-modules:
$$ 0\longrightarrow \frac U{\sigma_2(M)} \longrightarrow \frac{N}{\sigma_2(M)} \overset a \longrightarrow \frac{N}{\sigma_2(M)} \longrightarrow \frac{N}{aN} \longrightarrow 0$$
from which we deduce
$$[\frac{U}{\sigma_2(M)}]_A = [\frac{N}{aN}]_A = a^n.$$
And $a\sigma$ similarly induces an exact sequence of finite $A$-modules:
$$ 0\longrightarrow \frac U{\sigma_2(M)} \longrightarrow \frac{N}{\sigma_2(M)} \overset {a\sigma}\longrightarrow \frac{N}{\sigma_1(M)} \longrightarrow \frac{N}{a\sigma(N)} \longrightarrow 0.$$
We get:
\begin{eqnarray*}
[\frac{N}{\sigma_1(M)}]_A&=&[\frac{N}{\sigma_2(M)}]_A[\frac{U}{\sigma_2(M)}]_A^{-1} [\frac{N}{a\sigma(N)}]_A = [\frac{N}{\sigma_2(M)}]_A a^{-n} \frac{\det_{K_\infty}(a\sigma)}{\sgn(\det_{K_\infty}(a\sigma))}\\
& =&  [\frac{N}{\sigma_2(M)}]_A\frac{\det_{K_\infty}(\sigma)}{\sgn(\det_{K_\infty}(\sigma))}.
\end{eqnarray*}
The lemma follows.
\end{proof}

Let $M, N$ be two $A$-lattices in $V.$ By Lemma \ref{LemmaS5.1}, there exists an isomorphism of $K_\infty$-vector spaces $\sigma:V\rightarrow V$ such that $\sigma(M)\subset N,$ we set:
 $$[M:N]_A=\frac{\det_{K_\infty} \sigma}{\sgn (\det_{K_\infty} \sigma)} [\frac{N}{\sigma(M)}]_A^{-1}.$$
By Lemma \ref{LemmaS5.2}, this is well-defined.
In particular, if $M,N$ are two $A$-lattices in $V$ such that $N\subset M,$ then:
 $$[M:N]_A=[\frac{M}{N}]_A.$$
If $M,N,U$ are three $A$-lattices in $V,$ we get:
 $$[M:N]_A=[M:U]_A [U:N]_A.$$
\par
Let $F/K$ be a finite extension  such that  there exists a unique place of $F$ above $\infty$ (still denoted by $\infty$). Let $A'$ be the integral closure of $A$ in $F.$ We assume that $V$ is also an $F_\infty$-vector space. Let $[\cdot]_{A'}: \mathcal I(A')\rightarrow \overline{K}_\infty^\times$ be the map constructed as in Section \ref{Notation} with the help of the choice of a uniformizer $\pi'\in F_\infty^\times.$ Let $N_{F_\infty/K_\infty}: F_\infty \rightarrow K_\infty$ be the usual norm map.

\begin{lemma}\label{LemmaS5.3}
Let $M, N$ be two $A'$-lattices in $V.$ Then there exists an integer $m\geq 1$ such that $[M:N]_{A'}^{m} \in F_\infty^\times,[M:N]_A^{m}\in K_\infty^\times,$ and:
 $$N_{F_\infty/K_\infty}([M:N]_{A'}^{m})= [M:N]_A^{m}.$$
\end{lemma}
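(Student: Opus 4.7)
The plan is to pick one auxiliary isomorphism $\sigma:V\to V$ adapted to both $A$- and $A'$-structures, and then reduce the norm compatibility to the standard multiplicativity properties of determinants and Fitting ideals. The integer $m$ is introduced to absorb (i) non-principality of the fractional ideals that appear, and (ii) the mismatch between the uniformizer $\pi'$ of $F_\infty$ and the uniformizer $\pi$ of $K_\infty$.

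First, the proof of Lemma \ref{LemmaS5.1} depends only on the Dedekind property and applies verbatim to the $A'$-lattices $M,N$ inside the $F_\infty$-vector space $V$, yielding an $F_\infty$-linear isomorphism $\sigma$ with $\sigma(M)\subset N$. Since $\sigma$ is \textit{a fortiori} $K_\infty$-linear and $M,N$ are also $A$-lattices in $V$, this same $\sigma$ can be used to compute both $[M:N]_{A'}$ and $[M:N]_A$. Setting $W:=N/\sigma(M)$, the whole argument rests on two basic identities: $\det_{K_\infty}\sigma=N_{F_\infty/K_\infty}(\det_{F_\infty}\sigma)$, and $\Fitt_A W=N_{F/K}(\Fitt_{A'}W)$, the latter being precisely the identity already invoked in the proof of Corollary \ref{CorollaryS3.1}. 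The unique-place hypothesis on $F/K$ at $\infty$ further ensures $N_{F/K}(\alpha)=N_{F_\infty/K_\infty}(\alpha)$ for every $\alpha\in F^\times$.

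Choose $m\geq 1$ divisible by $|\Pic(A)|$, $|\Pic(A')|$ and $|\mathbb F_\infty^\times|$. After raising to the $m$-th power one may write $\Fitt_{A'}(W)^m=\alpha A'$ with $\alpha\in F^\times$ normalized to have sign $1$ in $F_\infty$. Then $[\Fitt_{A'}(W)]_{A'}^m=\alpha$ is a principal unit of $F_\infty^\times$, and since $N_{F_\infty/K_\infty}$ sends principal units to principal units, $[\Fitt_A(W)]_A^m=N_{F_\infty/K_\infty}(\alpha)$ is a principal unit of $K_\infty^\times$, settling the Fitting-ideal part of the norm relation. For the determinant part, set $y:=\det_{F_\infty}\sigma$ and write $y=\pi'^{v_{F_\infty}(y)}\cdot(\text{sign of }y)\cdot\langle y\rangle$ and $N_{F_\infty/K_\infty}(\pi')=\pi^{f}w$, with $f$ the residue degree of $F_\infty/K_\infty$ and $w\in O_{K_\infty}^\times$. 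A short direct computation then shows that the two sign-normalizations $y/\sgn(y)$ and $N_{F_\infty/K_\infty}(y)/\sgn(N_{F_\infty/K_\infty}(y))$ differ, after applying $N_{F_\infty/K_\infty}$, only by the factor $\sgn(w)^{v_{F_\infty}(y)}\in\mathbb F_\infty^\times$, which is killed by the $m$-th power thanks to the divisibility of $m$ by $|\mathbb F_\infty^\times|$. Assembling the two parts yields $[M:N]_{A'}^m\in F_\infty^\times$, $[M:N]_A^m\in K_\infty^\times$, and the desired equality $N_{F_\infty/K_\infty}([M:N]_{A'}^m)=[M:N]_A^m$.

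The main obstacle is precisely this last sign bookkeeping: since $\pi$ and $\pi'$ are chosen independently, $N_{F_\infty/K_\infty}(\pi')$ agrees with $\pi^f$ only up to a unit whose $K_\infty$-sign need not be trivial, and it is this discrepancy that forces the auxiliary power $m\geq 1$ in the statement rather than the naive identity $N_{F_\infty/K_\infty}([M:N]_{A'})=[M:N]_A$.
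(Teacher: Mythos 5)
Your overall strategy is the same as the paper's: choose a single $F_\infty$-linear (hence $K_\infty$-linear) isomorphism $\sigma$ with $\sigma(M)\subset N$, use the two compatibilities $\det_{K_\infty}\sigma = N_{F_\infty/K_\infty}(\det_{F_\infty}\sigma)$ and $\Fitt_A(W) = N_{F/K}(\Fitt_{A'}(W))$, and absorb the residual sign and ideal-class obstructions by a suitable power $m$. That matches the published proof.

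There is, however, a concrete error in your treatment of the Fitting-ideal factor. You assert that after principalizing, $[\Fitt_{A'}(W)]_{A'}^m = \alpha$ is a \emph{principal unit} of $F_\infty^\times$, and then invoke ``norms send principal units to principal units'' to conclude $[\Fitt_A(W)]_A^m = N_{F_\infty/K_\infty}(\alpha)$. This is false: $W=N/\sigma(M)$ is a nonzero finite $A'$-module, so $\Fitt_{A'}(W)$ is a proper integral ideal of $A'$ and its generator $\alpha$ is a non-unit of $A'$, hence has $v_\infty(\alpha)<0$; it is not even a unit of the valuation ring of $F_\infty$, let alone a principal unit. (Moreover, the normalization $\sgn'(\alpha)=1$ is not generally available: generators of a principal ideal of $A'$ differ only by units of $A'$, i.e.\ by constants, and these need not exhaust $\mathbb F_\infty'^\times$.) The consequence is that the Fitting-ideal part is \emph{not} ``settled'' by your argument; exactly the same sign discrepancy you carefully analyze for the determinant factor also appears here, namely $N_{F_\infty/K_\infty}\bigl(\tfrac{x}{\sgn'(x)}\bigr)$ differs from $\tfrac{N_{F/K}(x)}{\sgn(N_{F/K}(x))}$ by the element $\sgn(N_{F/K}(x))/N_{F_\infty/K_\infty}(\sgn'(x))\in\mathbb F_\infty^\times$. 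The paper handles both factors uniformly by first establishing $N_{F_\infty/K_\infty}([M:N]_{A'}^m)\in[M:N]_A^m\,\mathbb F_\infty^\times$ and only then passing to a further power of $|\mathbb F_\infty^\times|$ to kill the residual root of unity. Your choice of $m$ divisible by $|\mathbb F_\infty^\times|$ is exactly the right ingredient, so the gap is fixable: drop the ``principal unit'' shortcut, record the $\mathbb F_\infty^\times$-valued discrepancy for \emph{both} the determinant and the Fitting factor, and then kill the combined discrepancy by the power of $|\mathbb F_\infty^\times|$ already built into $m$.

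Two smaller points. First, the appeal to Lemma \ref{LemmaS5.1} ``verbatim'' is fine, but note you only need it for $A'$-lattices; the resulting $\sigma$ then serves for both index computations precisely as you say. Second, the identity $N_{F/K}=N_{F_\infty/K_\infty}$ on $F^\times$ is indeed what the unique-place-above-$\infty$ hypothesis buys you, and you use it correctly.
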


\begin{proof}
Let $\sigma:V\rightarrow V$ be an isomorphism of $F_\infty$-vector spaces such that $\sigma(M)\subset N,$ and we set: $I'={\rm Fitt}_{A'}\frac{N}{\sigma(M)}.$ Then:
 $${\rm Fitt}_{A}\frac{N}{\sigma(M)}=N_{F/K}(I').$$
Let $m \geq 1$ be an integer such that:
 $$I'^m=xA', \quad x\in A'\setminus\{0\}.$$
Then:
 $$[M:N]_{A'}^m= (\frac{\det_{F_\infty} \sigma}{\sgn' (\det_{F_\infty} \sigma)})^m\frac{\sgn'(x)}{x}.$$
Furthermore, we have:
 $${\det}_{K_\infty} \sigma= N_{F_\infty/K_\infty}({\det}_{F_\infty} \sigma).$$
Thus:
 $$[M:N]_A^m= (\frac{N_{F_\infty/K_\infty}({\det}_{F_\infty} \sigma)}{\sgn(N_{F_\infty/K_\infty}(\rm{det}_{F_\infty} \sigma))})^m \frac{\sgn(N_{F/K}(x))}{N_{F/K}(x)}.$$
Therefore:
 $$N_{F_\infty/K_\infty}([M:N]_{A'}^m)\in [M:N]_A^m \mathbb F_\infty^\times.$$
The lemma follows.
\end{proof}


\subsection{Regulator of Stark units and $L$-series}\label{RegS}${}$

\medskip

Let $E/K$ be a finite extension, $E\subset \mathbb C_\infty.$ Recall that $E_\infty=E\otimes_KK_\infty.$ If $M$ is an $A$-lattice in $E_\infty,$ then we call $[O_E:M]_A$ the \emph{$A$-regulator of $M$}.

\begin{definition}
Let $\rho:A\hookrightarrow O_E\{\tau\}$ be a Drinfeld module of rank $r\geq1.$ We define \textit{the regulator of Stark units} associated to $\rho/O_E$ by $[O_E:U_{St}(\rho/O_E)]_A.$
\end{definition}

\begin{proposition}\label{PropositionS5.1}
Let $\rho:A\hookrightarrow O_E\{\tau\}$ be a Drinfeld module of rank $r\geq1.$ We have:
 $$[O_E:U_{St}(\rho/O_E)]_A\in U_\infty.$$
Furthermore, the regulator of Stark units relative to $\rho/O_E$  depends on $\rho, A$ and $O_E,$ not on the choice of $\pi.$
\end{proposition}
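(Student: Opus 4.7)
The statement has two parts: (a) $[O_E : U_{St}(\rho/O_E)]_A \in U_\infty$, and (b) this quantity does not depend on $\pi$. Write $U_{St}=U_{St}(\rho/O_E)$, $U=U(\rho/O_E)$, $H=H(\rho/O_E)$. Using the decomposition $\overline{K}_\infty^\times=\pi^{\mathbb Q}\times\overline{\mathbb F}_q^\times\times U_\infty$ of Section \ref{Notation}, assertion (a) reduces to checking that $\sgn([O_E:U_{St}]_A)=1$ and $v_\infty([O_E:U_{St}]_A)=0$. The first is immediate from the definition of $[\cdot:\cdot]_A$ in Section \ref{Prel}: the factor $\frac{\det\sigma}{\sgn(\det\sigma)}$ has sign one by construction, and $\sgn([I])=1$ for every $I\in\mathcal I(A)$.

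For the valuation, I factor through $U$ using multiplicativity of $[\cdot:\cdot]_A$ and the inclusion $U_{St}\subset U$:
\begin{equation*}
[O_E:U_{St}]_A=[O_E:U]_A\cdot[U:U_{St}]_A=[O_E:U]_A\cdot[H]_A,
\end{equation*}
the second equality coming from $[U:U_{St}]_A=[U/U_{St}]_A$ (since $U_{St}\subset U$) together with Theorem \ref{TheoremS4.1}. The identity $\deg\Fitt_A M=\dim_{\mathbb F_q}M$ for finite $A$-modules yields $v_\infty([H]_A)=-\dim_{\mathbb F_q}H/d_\infty$, so the task reduces to proving $v_\infty([O_E:U]_A)=\dim_{\mathbb F_q}H/d_\infty$.

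This last identity is a Haar-measure statement. Fix any Haar measure $\mu$ on the locally compact abelian group $E_\infty$. A direct manipulation of the formula in Section \ref{Prel}, using $|x|_\infty=q^{-d_\infty v_\infty(x)}$ and $|[I]|_\infty=q^{\deg I}$, shows that for any $A$-lattice $M$ in $E_\infty$,
\begin{equation*}
|[O_E:M]_A|_\infty=\frac{\mu(E_\infty/M)}{\mu(E_\infty/O_E)}.
\end{equation*}
The definition of $U$ produces the short exact sequence of compact groups
\begin{equation*}
0\to E_\infty/U\xrightarrow{\overline{\exp_\rho}}E_\infty/O_E\to H\to 0.
\end{equation*}
Because $\exp_\rho\in 1+E\{\{\tau\}\}\tau$ is a bijective isometry on a sufficiently small open neighborhood of $0$ in $E_\infty$, translation invariance of the Haar modulus for continuous group endomorphisms of a locally compact abelian group forces $\exp_\rho$ itself to have modulus $1$ globally. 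Hence $\overline{\exp_\rho}$ is measure-preserving, $\mu(E_\infty/U)=\mu(E_\infty/O_E)/|H|$, and the required valuation identity follows.

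For (b), I would verify directly that under $\pi\mapsto u\pi$ with $u$ a principal unit of $K_\infty$, the changes in $\langle I\rangle$ and $\pi^{-\deg I/d_\infty}$ cancel, so $[\cdot]\colon\mathcal I(A)\to\overline{K}_\infty^\times$ is $\pi$-independent. Combined with the $\pi$-independence of $\det_{K_\infty}\sigma\in K_\infty^\times$ and of the chosen sign function, this makes the formula for $[O_E:U_{St}]_A$ intrinsic. The main technical obstacle throughout is the Haar-modulus-$1$ property of $\exp_\rho$, which is essentially the content of Taelman's Theorem 1 in \cite{TAE1} and can be justified by approximating $\exp_\rho$ on the open subgroups where it is a bijective isometry.
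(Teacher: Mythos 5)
Your part (a) takes a genuinely different route from the paper. The paper fixes $\theta\in A\setminus\mathbb F_q$, passes to $B=\mathbb F_q[\theta]$, applies Taelman's co-volume formula (\cite{TAE1}, Theorem 2) in that genus-zero, $d_\infty=1$ setting, and then comes back to $A$ via the norm-compatibility Lemma \ref{LemmaS5.3} to deduce $v_\infty([O_E:U_{St}(\rho/O_E)]_A)=0$. You instead work directly over $A$ by proving the Haar-measure identity $|[O_E:M]_A|_\infty=\mu(E_\infty/M)/\mu(E_\infty/O_E)$ and combining it with the exact sequence $0\to E_\infty/U\to E_\infty/O_E\to H\to 0$, which amounts to re-deriving the measure-theoretic content of Taelman's theorem in the general setting rather than reducing to $\mathbb F_q[\theta]$. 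Your formula for $|[O_E:M]_A|_\infty$ is correct (it follows from $\deg\Fitt_A N=\dim_{\mathbb F_q}N$ and $|[I]|_\infty=q^{\deg I}$), and the exact sequence is exact for exactly the reasons you give. The one soft spot is the assertion that $\overline{\exp_\rho}$ is measure-preserving: you argue it from the local-isometry property plus ``translation invariance of the Haar modulus,'' which is morally right (and is indeed the heart of Taelman's result), but you should say carefully in what sense a non-surjective, non-injective continuous endomorphism of $E_\infty$ has a modulus; the clean way is to restrict to the induced isomorphism $E_\infty/\ker\exp_\rho\xrightarrow{\sim}\exp_\rho(E_\infty)$ of locally compact groups, whose modulus is computed on a small open subgroup where $\exp_\rho$ is a bijective isometry. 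Granting that, your route is sound and avoids Lemma \ref{LemmaS5.3} for this part. One buys directness; the paper buys a self-contained deduction from the cited $\mathbb F_q[\theta]$ result.

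For part (b), however, there is a genuine gap. Your plan rests on the claim that $[\cdot]:\mathcal I(A)\to\overline{K}_\infty^\times$ is $\pi$-independent, and you only sketch a check when $\pi\mapsto u\pi$ with $u$ a \emph{principal} unit. But ``the choice of $\pi$'' in the paper also encompasses replacing $\pi$ by $\zeta\pi$ for $\zeta\in\mathbb F_\infty^\times$, and it further includes the choice of the tower $\pi_n$ with $\pi_n^n=\pi_{n-1}$, which your argument does not address at all. For a non-principal ideal $I$, $[I]$ is an $h$-th root ($h=|\Pic(A)|$) of the $\pi$-independent quantity $x/\sgn(x)$ (where $I^h=xA$), pinned down by the requirement $\sgn([I])=1$ inside the decomposition $\overline{K}_\infty^\times=\pi^{\mathbb Q}\times\overline{\mathbb F}_q^\times\times U_\infty$; changing the tower or replacing $\pi$ by $\zeta\pi$ changes the subgroup $\pi^{\mathbb Q}$ and can therefore change $[I]$ by a root of unity. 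So $[\cdot]$ is \emph{not} intrinsic in general, and this is precisely why the paper does not argue that way. What the paper does is to note (via Lemma \ref{LemmaS5.3} with $F=K$) that a suitable power $[O_E:U_{St}(\rho/O_E)]_A^{m'}$ is the same for $\pi$ and $\pi'$, and then to combine this with part (a): both regulators lie in $U_\infty$, and in characteristic $p$ the group $U_\infty$ is torsion-free, so equality of $m'$-th powers forces equality. Your write-up is missing exactly this torsion-free-plus-power-agreement step; without it, part (b) does not follow.
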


\begin{proof}
Let $\theta \in A\setminus\mathbb F_q,$ and let $L=\mathbb F_q(\theta), B=\mathbb F_q[\theta].$ Let $[\cdot]_B: \mathcal I(B)\rightarrow L_\infty^\times$ be the map as in Section \ref{Notation} associated to the choice of $\frac{1}{\theta}$ as a uniformizer of $L_\infty.$ Then, by  Theorem \ref{TheoremS4.1}, we have:
 $$[O_E:U_{St}(\rho/O_E)]_B= [O_E:U(\rho/O_E)]_B [H(\rho/O_E)]_B.$$
Then, by \cite{TAE1}, Theorem 2, we get:
 $$[O_E:U_{St}(\rho/O_E)]_B\in 1+\frac{1}{\theta} \mathbb F_q[[\frac{1}{\theta}]].$$
Now, by Lemma \ref{LemmaS5.3}, there exists an integer $m\geq 1$ such that:
 $$N_{K_\infty/L_\infty}([O_E:U_{St}(\rho/O_E)]_A^m)=[O_E:U_{St}(\rho/O_E)]_B^m.$$
This implies:
 $$v_\infty([O_E:U_{St}(\rho/O_E)]_A)=0.$$
Thus:
 $$[O_E:U_{St}(\rho/O_E)]_A\in \overline{\mathbb F}_q^\times\times U_\infty.$$
But $\sgn ([O_E:U_{St}(\rho/O_E)]_A)=1,$ thus:
 $$[O_E:U_{St}(\rho/O_E)]_A\in  U_\infty.$$\par
Let $\pi'$ be another uniformizer of $K_\infty,$ and let $[\cdot]'_A:\mathcal I(A)\rightarrow \overline{K}_\infty^\times$ be the map as in Section \ref{Notation} associated to $\pi'.$ Then, by the above discussion, we get:
 $$[O_E:U_{St}(\rho/O_E)]'_A\in  U_\infty.$$
Again, by Lemma \ref{LemmaS5.3}, there exists an integer $m'\geq 1$ such that:
 $$([O_E:U_{St}(\rho/O_E)]'_A)^{m'}=[O_E:U_{St}(\rho/O_E)]_A^{m'}.$$
Since $[O_E:U_{St}(\rho/O_E)]'_A,[O_E:U_{St}(\rho/O_E)]_A\in  U_\infty,$ we get:
 $$[O_E:U_{St}(\rho/O_E)]'_A=[O_E:U_{St}(\rho/O_E)]_A.$$
This concludes the proof of the proposition.
\end{proof}

Let's set:
 $$\alpha_A(\rho/O_E):=\frac{L_A(\rho/O_E)}{[O_E:U_{St}(\rho/O_E)]_A}\in \overline{K}_\infty^\times.$$
By Proposition \ref{PropositionS5.1} and Remark \ref{RemarkS3.1}, $\alpha_A(\rho/O_E)$ depends on $A, \rho,$ and $O_E,$ not on the choice of $\pi.$ Furthermore:
 $$\alpha_A(\rho/O_E)\in U_\infty.$$
Let's also observe that, if $p^k$ is the exact power of $p$ dividing $\mid {\rm Pic}(A)\mid,$ then:
 $$\alpha_A(\rho/O_E)^{p^kd_\infty}\in K_\infty^\times.$$
We have the fundamental result due to L. Taelman (\cite{TAE2}, Theorem 1):

\begin{theorem}[Taelman] \label{TheoremS5.1}
Assume that the genus of $K$ is zero and $d_\infty=1.$ Then:
$$\alpha_A(\rho/O_E)=1.$$
\end{theorem}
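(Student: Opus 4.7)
Since $g = 0$ and $d_\infty = 1$, Riemann--Roch gives an element $\theta \in A$ of degree one, so $A = \mathbb F_q[\theta]$ is a principal ideal domain. In this setting the bracket $[\cdot]_A$ constructed in Section \ref{Notation} (with $\pi = 1/\theta$) is exactly the monic-generator normalization used by Taelman, so there is nothing subtle in comparing normalizations. The first step of my proof is therefore to identify $L_A(\rho/O_E)$ with Taelman's $L$-series: each local factor $[O_E/\mathfrak P]_A/[\rho(O_E/\mathfrak P)]_A$ equals $(-1)^{r'}P(0)/P(1)$ by the proof of Proposition \ref{PropositionS3.1}, where $P$ is (a power of) the characteristic polynomial of Frobenius over $A = \mathbb F_q[\theta]$, and this is precisely the shape of the local Euler factor appearing in \cite{TAE2}.

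Next I would invoke Taelman's class number formula (\cite{TAE2}, Theorem 1), which is available exactly in the polynomial-ring case:
\begin{equation*}
L_A(\rho/O_E) \;=\; [O_E : U(\rho/O_E)]_A \cdot [H(\rho/O_E)]_A.
\end{equation*}
The entire content of Taelman's theorem is used here as a black box; the present theorem is a formal repackaging of it in terms of Stark units.

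The third step is to rewrite the right-hand side using the module of Stark units. Because $U_{St}(\rho/O_E) \subset U(\rho/O_E) \subset O_E$ are three $A$-lattices in $E_\infty$, the multiplicativity of $[M : N]_A$ under a chain of lattices (established at the end of Section \ref{Prel}) together with Theorem \ref{TheoremS4.1} gives
\begin{align*}
[O_E : U_{St}(\rho/O_E)]_A
&= [O_E : U(\rho/O_E)]_A \cdot \bigl[\tfrac{U(\rho/O_E)}{U_{St}(\rho/O_E)}\bigr]_A \\
&= [O_E : U(\rho/O_E)]_A \cdot [H(\rho/O_E)]_A.
\end{align*}
Combining this with Taelman's formula yields $L_A(\rho/O_E) = [O_E : U_{St}(\rho/O_E)]_A$, whence $\alpha_A(\rho/O_E) = 1$ by definition.

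The proof is thus essentially formal once Theorem \ref{TheoremS4.1} is in hand: all of the genuine analytic/arithmetic work is hidden in Taelman's original theorem. The only point requiring any care is the bookkeeping in step one, namely checking that the definitions of $L_A(\rho/O_E)$, $U(\rho/O_E)$ and $H(\rho/O_E)$ in the present paper coincide with Taelman's when $A$ is a polynomial ring; this is the mild obstacle, but it is resolved by inspection of the local factors and the explicit formulas in Proposition \ref{PropositionS3.1}.
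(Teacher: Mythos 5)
Your proposal is correct and follows essentially the same route as the paper: both choose $\theta$ of degree one so that $A=\mathbb F_q[\theta]$, invoke Taelman's class formula (\cite{TAE2}, Theorem 1) as a black box, and then use Theorem \ref{TheoremS4.1} together with the multiplicativity of $[\,\cdot:\cdot\,]_A$ to convert $[O_E:U(\rho/O_E)]_A[H(\rho/O_E)]_A$ into $[O_E:U_{St}(\rho/O_E)]_A$. The only cosmetic difference is that the paper explicitly cites Proposition \ref{PropositionS5.1} to dispose of the dependence on the uniformizer, whereas you address the same normalization issue informally.
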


\begin{proof}
Select $\theta\in A\setminus\mathbb F_q$ such that $v_\infty(\theta)=1.$ Then $A=\mathbb F_q[\theta].$ Let $[\cdot]_A:\mathcal I(A)\rightarrow K_\infty^\times$ be the map as in Section \ref{Notation} associated to the choice of $\frac{1}{\theta}$ as a uniformizer of $K_\infty.$ Then, by Proposition \ref{PropositionS5.1}, Theorem \ref{TheoremS4.1} and \cite{TAE2}, Theorem 1:
$$[O_E:U_{St}(\rho/O_E)]_A= [O_E:U(\rho/O_E)]_A[H(\rho/O_E)]_A= L_A(\rho/O_E).$$
This concludes the proof of the theorem.
\end{proof}

\begin{corollary}\label{CorollaryS5.1} ${}$\par
\noindent 1) Let $F/K$ be a finite extension, $F\subset E,$ and such that  there exists a unique place of $F$ above $\infty$ (still denoted by $\infty$). Let $A'$ be the integral closure of $A$ in $F.$ Let $N_{F_\infty/K_\infty}: F_\infty \rightarrow K_\infty$ be the usual norm map. Then, there exists an integer $k\geq 1$ such that $\alpha_{A'}(\rho/O_E)^k\in F_\infty^\times, \alpha_A(\rho/O_E)^k\in K_\infty^\times,$ and:
$$N_{F_\infty/K_\infty}(\alpha_{A'}(\rho/O_E)^k)=\alpha_A(\rho/O_E)^k.$$
In particular, $\alpha_{A'}(\rho/O_E)=1\Rightarrow \alpha_A(\rho/O_E)=1.$

\medskip

\noindent 2) If there exists an integer $m\geq 1$ such that $\alpha_A(\rho/O_E)^m \in K^\times,$ then $\alpha_A(\rho/O_E)=1.$ In particular, if $\sigma (\alpha_A(\rho/O_E))= \alpha_A(\rho^{\sigma}/\sigma (O_E))$ for all $\sigma \in {\rm Aut}_K(\mathbb C_\infty),$ then $\alpha_A(\rho/O_E)=1.$
\end{corollary}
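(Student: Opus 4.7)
The plan for Part 1 is to combine the norm-compatibility of the $L$-series (Corollary \ref{CorollaryS3.1}) with the norm-compatibility of co-volumes (Lemma \ref{LemmaS5.3}) applied to the $A'$-lattices $O_E$ and $U_{St}(\rho/O_E)$ in $E_\infty$. Writing $\alpha_A(\rho/O_E)=L_A(\rho/O_E)\cdot[O_E:U_{St}(\rho/O_E)]_A^{-1}$, Lemma \ref{LemmaS5.3} produces an integer $k\geq 1$ such that the two co-volumes raised to the $k$-th power lie in $F_\infty^\times$ and $K_\infty^\times$ respectively and are related by $N_{F_\infty/K_\infty}$; multiplying by the $L$-series identity (whose two sides already lie in the appropriate completions) yields $N_{F_\infty/K_\infty}(\alpha_{A'}(\rho/O_E)^k)=\alpha_A(\rho/O_E)^k$. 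The implication $\alpha_{A'}=1\Rightarrow\alpha_A=1$ is then immediate from the torsion-freeness of $U_\infty$: the only root of unity in $U_\infty$ is $1$ (since $U_\infty\cap\overline{\mathbb F}_q^\times=\{1\}$), so $\alpha_A^k=1$ forces $\alpha_A=1$.

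For the first claim of Part~2, I would choose $\theta\in A\setminus\mathbb F_q$, set $L=\mathbb F_q(\theta)$ and $B=\mathbb F_q[\theta]$ (so that $L$ has genus zero and the restriction of $\infty$ to $L$ has degree one), and combine Taelman's formula $\alpha_B(\rho/O_E)=1$ (Theorem \ref{TheoremS5.1}) with the Part~1 relation applied to $L\subset K$: for some $k\geq 1$, $N_{K_\infty/L_\infty}(\alpha_A^k)=1$. Assume $\alpha_A^m\in K^\times$. Writing $m=p^{k_0}m'$ with $\gcd(m',p)=1$ and passing from $\alpha_A$ to $\alpha_A^{p^{k_0}}$ (allowed by torsion-freeness of $U_\infty$), we reduce to the case $\gcd(m,p)=1$. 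A Galois argument then places $\alpha_A$ in $K_\infty^\times$: for any $\sigma\in\Gal(\overline{K}_\infty/K_\infty)$, the quotient $\sigma(\alpha_A)/\alpha_A$ is an $m$-th root of unity in the Galois-stable group $U_\infty$, hence equals $1$, and the separability of $X^m-\alpha_A^m$ over $K_\infty$ forces $\alpha_A\in K_\infty^{\mathrm{sep}}\cap K_\infty^{\mathrm{perf}}=K_\infty$. Consequently $N_{K_\infty/L_\infty}(\alpha_A)\in 1+\mathfrak{m}_{L_\infty}$ is a $k$-th root of unity, so equals $1$, and $N_{K/L_\theta}(\alpha_A^m)=1$ in $L^\times$ for every $\theta\in A\setminus\mathbb F_q$.

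The step I expect to be the main obstacle is to extract $\alpha_A^m=1$ from this family of norm identities. The divisor $D=\mathrm{div}(\alpha_A^m)$ on the smooth projective curve $X/\mathbb F_q$ attached to $K$ is supported at closed points distinct from $\infty$ (since $v_\infty(\alpha_A^m)=0$), and $N_{K/L_\theta}(\alpha_A^m)=1$ translates into the vanishing $\pi_{\theta,*}(D)=0$ under the finite morphism $\pi_\theta\colon X\to\mathbb P^1_{\mathbb F_q}$ induced by $\theta$. The key geometric input is a separation lemma: given any closed point $Q_0\in\mathrm{supp}(D)$, Riemann--Roch furnishes $\theta\in A$ taking pairwise distinct values at $Q_0$ and at the (finitely many) other points of $\mathrm{supp}(D)$; then $Q_0$ is alone in its $\pi_\theta$-fiber, and the coefficient of $\pi_\theta(Q_0)$ in $\pi_{\theta,*}(D)$ equals the nonzero integer $n_{Q_0}[k(Q_0):k(\pi_\theta(Q_0))]$, contradicting $\pi_{\theta,*}(D)=0$. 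Hence $D=0$, $\alpha_A^m\in\mathbb F_q^\times\cap U_\infty=\{1\}$, and $\alpha_A=1$ by torsion-freeness. Finally, for the ``in particular'' clause, the equivariance $\sigma(\alpha_A)=\alpha_A(\rho^\sigma/\sigma(O_E))$, applied to $\sigma$ fixing $E$, gives $\alpha_A\in E^\times$ with finite Galois orbit (so $\alpha_A\in\overline{K}$); a Galois-descent argument exploiting once more the torsion-freeness of $U_\infty$ produces an integer $m\geq 1$ with $\alpha_A^m\in K^\times$, at which point the first claim of Part~2 yields $\alpha_A=1$.
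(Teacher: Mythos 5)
Your Parts~1 and the first claim of Part~2 are essentially sound, though the latter takes a somewhat different route from the paper. For Part~1 you reproduce the paper's argument. For the first claim of Part~2, the paper works prime by prime: for each maximal ideal $P$ of $A$, it chooses $\theta$ with $\theta A = P^{l}$ (possible since $\mathrm{Pic}(A)$ is finite); then $P$ is the unique maximal ideal of $A$ above $\theta\mathbb F_q[\theta]$, so $N_{K/\mathbb F_q(\theta)}(x)\in\mathbb F_q^\times$ directly gives $v_P(x)=0$. You instead encode everything in the divisor $D=\mathrm{div}(\alpha_A^m)$ and argue $\pi_{\theta,*}(D)=0$ for a separating $\theta$. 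Both routes are fine. Two small points about yours: the Galois argument placing $\alpha_A$ itself in $K_\infty^\times$, and the reduction to $\gcd(m,p)=1$, are unnecessary — Part~1 already furnishes $\alpha_A^k\in K_\infty^\times$, and one can simply work with $x=\alpha_A^m\in K^\times$ throughout, as the paper does; and ``taking pairwise distinct values'' is not quite the right hypothesis (distinct values in different residue fields can still lie in the same closed fibre of $\pi_\theta$) — what you want is $\theta\equiv 0\pmod{Q_0}$ and $\theta\not\equiv 0\pmod{Q_i}$ for the other $Q_i$, which Riemann--Roch does give and which isolates $Q_0$ in the fibre over $0\in\mathbb A^1$.

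There is, however, a genuine gap in your treatment of the ``in particular'' clause. You apply the equivariance hypothesis only to $\sigma$ fixing $E$, which gives that $\alpha_A$ is fixed by $\mathrm{Aut}_E(\mathbb C_\infty)$, hence $\alpha_A\in E^{\mathrm{perf}}$ (not $E^\times$ as you write, since $\mathbb C_\infty$ has nontrivial purely inseparable extensions of $E$). From $\alpha_A\in E^{\mathrm{perf}}$ one cannot conclude $\alpha_A^m\in K^\times$: for instance $\alpha_A^{p^j}$ could be a primitive element of $E/K$, in which case no power lies in $K$. The torsion-freeness of $U_\infty$ does not repair this, since nothing constrains the Galois conjugates $\sigma(\alpha_A)=\alpha_A(\rho^\sigma/\sigma(O_E))$ to differ from $\alpha_A$ by roots of unity. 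What the paper actually does is prove, \emph{independently of the hypothesis}, that $\alpha_A(\rho^\sigma/\sigma(O_E))=\alpha_A(\rho/O_E)$ for every $\sigma\in\mathrm{Aut}_K(\mathbb C_\infty)$: it checks that $L_A(\rho^\sigma/\sigma(O_E))=L_A(\rho/O_E)$ (since $\sigma$ permutes maximal ideals and preserves Fitting ideals over $A$), and that $[\sigma(O_E):U_{St}(\rho^\sigma/\sigma(O_E))]_A=[O_E:U_{St}(\rho/O_E)]_A$ (using $U(\widetilde\rho/O_E[z])\subset E[[z]]$ to transport the unit module by $\sigma$). Feeding this into the hypothesis then yields $\sigma(\alpha_A)=\alpha_A$ for \emph{all} $\sigma$, hence $\alpha_A\in K^{\mathrm{perf}}$, hence $\alpha_A^{p^k}\in K^\times$, and the first claim finishes. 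Your outline omits this invariance computation, which is the substantive content of the ``in particular'' statement.
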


\begin{proof}${}$\par
\noindent 1) The first assertion  is a consequence of Corollary \ref{CorollaryS3.1} and Lemma \ref{LemmaS5.3}. If $\alpha_{A'}(\rho/O_E)=1,$ then there exists an integer $k\geq 1$ such that $\alpha_A(\rho/O_E)^k=1.$ But, since $\sgn(\alpha_A(\rho/O_E))=1,$ we get $\alpha_A(\rho/O_E)=1.$

\medskip

\noindent 2) Let $x= \alpha_A(\rho/O_E)^m \in K^\times.$ Let $P$ be a maximal ideal of $A,$ and select an integer $l\geq 1$ such that $P^l$ is a principal ideal. Let $\theta \in A\setminus \mathbb F_q$ such that $P^l=\theta A.$ Let $L=\mathbb F_q(\theta)$ and $B=\mathbb F_q[\theta].$ Then, by Taelman's Theorem (Theorem \ref{TheoremS5.1}), we have:
 $$\alpha_B(\rho/O_E)=1.$$
Therefore, by 1), we have:
 $$N_{K/L}(x)\in \mathbb F_q^\times.$$
Since $P$ is the only maximal ideal of $A$ above $\theta B,$ we deduce that $x$ is a $P$-adic unit. Since this is true for all maximal ideal of $A,$ we get:
 $$x\in \mathbb  F_q^\times.$$
But, $\sgn(\alpha_A(\rho/O_E))=1,$ thus: $\alpha_A(\rho/O_E)=1.$

\medskip

\noindent Let's assume that $\sigma (\alpha_A(\rho/O_E))= \alpha_A(\rho^{\sigma}/\sigma (O_E))$ for all $\sigma \in {\rm Aut}_K(\mathbb C_\infty).$ Let $\sigma \in
{\rm Aut}_K(\mathbb C_\infty).$  Let $\frak P$ be a maximal ideal of $O_E,$ then:
 $$[\frac{\sigma(O_E)}{\sigma (\frak P)}]_A=[\frac{O_E}{\frak P}]_A,$$
 $$[\rho^{\sigma}(\frac{\sigma(O_E)}{\sigma (\frak P)})]_A=[\rho (\frac{O_E}{\frak P})]_A.$$
Thus:
 $$L_A(\rho^{\sigma}/\sigma(O_E))=L_A(\rho/O_E).$$
Observe that $\sigma$ induces a $K_\infty$-algebra isomorphism:
 $$E_\infty\simeq \sigma(E)_\infty.$$
Note that $\exp_{\widetilde{\rho}}: E[[z]]\rightarrow E[[z]]$ is an $\mathbb F_q[[z]]$-algebra isomorphism. Therefore:
 $$U(\widetilde{\rho}/O_E[z])\subset E[[z]].$$
Thus:
 $$U(\widetilde{\rho^{\sigma}}/\sigma(O_E)[z])=\sigma (U(\widetilde{\rho}/O_E[z])).$$
By the definition of Stark units, we get:
 $$U_{St}(\rho^{\sigma}/\sigma(O_E))=\sigma(U_{St}(\rho/O_E)).$$
Thus:
 $$[\sigma(O_E): U_{St}(\rho^{\sigma}/\sigma(O_E))]_A=[O_E:U_{St}(\rho/O_E)]_A.$$
Therefore:
 $$\alpha_A(\rho^{\sigma}/\sigma (O_E))=\alpha_A(\rho/O_E).$$
We get:
$$\forall \sigma \in {\rm Aut}_K(\mathbb C_\infty), \quad \sigma (\alpha_A(\rho/O_E))= \alpha_A(\rho/O_E).$$
This implies that $\alpha_A(\rho/O_E)$ is algebraic over $K$ and that there exists an integer $k\geq 0$ such that:
$$\alpha_A(\rho/O_E)^{p^k}\in K^\times.$$
Therefore:
$$\alpha_A(\rho/O_E)=1.$$
\end{proof}

We do not know whether $\alpha_A(\rho/O_E)$ is algebraic over $K,$ and  it might be too naive to expect that $\alpha_A(\rho/O_E)=1$ in general. However, in the next section, we will prove that, if $\phi$ is a sign-normalized rank one Drinfeld module and $E/K$ is a finite abelian extension such that $H \subset E,$ then $\alpha_A(\phi/O_E)=1$ (Theorem \ref{TheoremS6.3}). L. Taelman informed us that the class formula (\cite{TAE2}, Theorem 1) has been generalized by C. Debry to the case where $A$ is a principal ideal domain.

\medskip

We also prove below that $\alpha_A(\phi/O_E)$ is invariant under isogeny, which could be considered as an analogue of the isogeny invariance of the Birch and Swinnerton-Dyer conjecture due to Tate \cite{TAT1}:

\begin{theorem}\label{TheoremS5.2}
Let $E/K$ be a finite extension and let $\rho, \phi:A\rightarrow O_E\{\tau\}$ be two Drinfeld $A$-modules such that there exists $u\in O_E\{\tau\}\setminus \{0\}$ with the following property:
$$\forall a\in A, \quad \rho_au=u\phi_a,$$
then:
 $$\alpha_A(\rho/O_E)=\alpha_A(\phi/O_E).$$
\end{theorem}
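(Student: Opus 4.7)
The plan is to show that the two constituents of $\alpha_A(\,\cdot\,/O_E)$---namely the $L$-series $L_A$ and the regulator of Stark units $[O_E:U_{St}(\,\cdot\,/O_E)]_A$---are each isogeny invariants, whence so is their ratio.

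For the $L$-series, I would work locally: at each maximal ideal $\mathfrak P$ of $O_E$, after dividing $u$ by the largest power of a uniformizer of $\mathfrak P$ contained in all its coefficients, the reduction $\bar u$ lies in $(O_E/\mathfrak P)\{\tau\}\setminus\{0\}$ and provides a nonzero isogeny $\bar\phi_{\mathfrak P}\to\bar\rho_{\mathfrak P}$ of Drinfeld $A$-modules over the finite residue field. Isogenous Drinfeld modules over a finite field share the same characteristic polynomial of Frobenius---precisely the ingredient from \cite{GEK} underlying Proposition \ref{PropositionS3.1}---so the local Euler factors of $\rho$ and $\phi$ at $\mathfrak P$ agree; taking the product over $\mathfrak P$ yields $L_A(\rho/O_E) = L_A(\phi/O_E)$.

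For the regulator, substituting $z\tau$ for $\tau$ in $\rho_a u = u\phi_a$ yields $\tilde\rho_a\tilde u = \tilde u\tilde\phi_a$, and uniqueness of the exponential in $E[z]\{\{\tau\}\}$ forces the operator identity $\tilde u\circ\exp_{\tilde\phi} = \exp_{\tilde\rho}\circ u_0$ (note $u_0\neq 0$, as otherwise the commutation relations $\rho_a u = u\phi_a$ for varying $a\in A$ force $u=0$). Applied to $f\in U(\tilde\phi/O_E[z])$, this gives $\exp_{\tilde\rho}(u_0 f) = \tilde u(\exp_{\tilde\phi}(f))\in O_E[z]$, so $u_0 f\in U(\tilde\rho/O_E[z])$; evaluating at $z=1$ yields the key containment $u_0\,U_{St}(\phi/O_E)\subset U_{St}(\rho/O_E)$. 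Applying Lemma \ref{LemmaS5.2} with $\sigma = u_0\cdot\mathrm{id}$ gives the formula $[M:u_0 M]_A = N_{E/K}(u_0)/\sgn(N_{E/K}(u_0))$ for any $A$-lattice $M\subset E_\infty$. Multiplicativity of $[\,\cdot:\cdot\,]_A$ along the chain $O_E,\ U_{St}(\phi/O_E),\ u_0 U_{St}(\phi/O_E),\ U_{St}(\rho/O_E)$ then reduces the equality of Stark regulators to the Fitting-ideal identity
\[ \Fitt_A\bigl(U_{St}(\rho/O_E)\,/\,u_0\,U_{St}(\phi/O_E)\bigr) \;=\; N_{E/K}(u_0)\,A. \]

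The main obstacle is proving this last identity. I plan to apply the Snake Lemma to the commutative square whose horizontal arrows are $u_0:U(\tilde\phi/O_E[z])\to U(\tilde\rho/O_E[z])$ and $\tilde u:O_E[z]\to O_E[z]$, joined vertically by the two exponentials, and then descend by evaluating at $z=1$, combining the resulting exact sequences with the short exact sequence $0\to U_{St}\to U\to V\to 0$ of Theorem \ref{TheoremS4.1} (recall $[V]_A = [H]_A$). Invoking the standard existence of a dual isogeny $v:\rho\to\phi$ with $vu = \phi_a$, $uv = \rho_a$ for some $a\in A\setminus\{0\}$ (modulo a $\mathrm{CM}$-correction when $\mathrm{End}(\phi)\neq A$) produces the symmetric containment $v_0\,U_{St}(\rho/O_E)\subset U_{St}(\phi/O_E)$, and the product of the two cokernel Fitting ideals equals $N_{E/K}(v_0 u_0)A = N_{E/K}(a)A$. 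Matching one factor with $\Fitt_A(O_E/u(O_E)) = N_{E/K}(u_0)A$---the determinant of $u_0$ as a $K_\infty$-linear endomorphism of the Lie algebra of the isogeny---then isolates the desired identity and concludes the proof.
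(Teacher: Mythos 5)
Your plan to prove the equality by showing $L_A(\rho/O_E)=L_A(\phi/O_E)$ and $[O_E:U_{St}(\rho/O_E)]_A=[O_E:U_{St}(\phi/O_E)]_A$ separately does not work, and this is precisely the obstacle the paper has to circumvent. The first half of your argument already fails: if $\mathfrak P$ is a maximal ideal of $O_E$ at which every coefficient of $u$ vanishes, you propose to divide $u$ by a power $\varpi^m$ of a uniformizer and reduce. But in the twisted ring $E\{\tau\}$ the scalar $\varpi^{-m}$ does not commute past $\tau$ (one has $\tau^j\varpi^{-m}=\varpi^{-mq^j}\tau^j$), so the relation $\rho_a(\varpi^{-m}u)=(\varpi^{-m}u)\phi_a$ is simply false in general; the normalized $\bar u$ is not an isogeny $\bar\phi_{\mathfrak P}\to\bar\rho_{\mathfrak P}$, and the local Euler factors at such $\mathfrak P$ need not agree. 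The same phenomenon undermines the claimed Fitting-ideal identity $\Fitt_A(U_{St}(\rho/O_E)/\zeta\,U_{St}(\phi/O_E))=N_{E/K}(\zeta)A$: there is no reason for the cokernel of $\zeta$ on the Stark-unit lattices to have the ``naive'' Fitting ideal one would get for $O_E/\zeta O_E$, and the dual-isogeny bookkeeping you sketch would only recover the product of two such cokernels, not isolate either factor.

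The paper's argument is structured precisely to avoid proving either equality outright. Using Gekeler it shows only that the local factors agree at primes $\mathfrak P$ with $u\not\equiv 0\pmod{\mathfrak P}$, so the quotient $L_A(\rho/O_E)/L_A(\phi/O_E)$ equals $[I]$ for some fractional ideal $I$; similarly the containment $\zeta U_{St}(\phi/O_E)\subset U_{St}(\rho/O_E)$ and the multiplicativity of $[\cdot:\cdot]_A$ yield $[O_E:U_{St}(\rho/O_E)]_A/[O_E:U_{St}(\phi/O_E)]_A=[J]$ for some fractional ideal $J$, with no claim that $I$ or $J$ is trivial. Thus $\alpha_A(\rho/O_E)/\alpha_A(\phi/O_E)=[IJ^{-1}]$, and a suitable power of this ratio lies in $K^\times$. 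The proof is then completed by a genuinely global argument: combining Corollary \ref{CorollaryS5.1}, Taelman's theorem (Theorem \ref{TheoremS5.1}), and the norm compatibility of Lemma \ref{LemmaS5.3} to show that $N_{K/\mathbb F_q(\theta)}$ of this element is trivial for every $\theta\in A\setminus\mathbb F_q$, whence it must be $1$. This ``run over all subrings $\mathbb F_q[\theta]$'' step is the key idea missing from your proposal, and no amount of local Euler-factor bookkeeping will substitute for it.
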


\begin{proof} Let $\frak P$ be a maximal ideal of $O_E$ such that $u\not \equiv 0\pmod{\frak P}.$ Then by \cite{GEK}, Theorem 3.5 and Theorem 5.1, we get:
 $$[\rho(\frac{O_E}{\frak P})]_A=[\phi(\frac{O_E}{\frak P})]_A.$$
This implies that there exists an ideal $I\in \mathcal I(A)$ such that:
 $$\frac{L_A(\rho/O_E)}{L_A(\phi/O_E)}=[I].$$
Let $\zeta\in O_E\setminus\{0\}$ be the constant coefficient of $u.$ Then we have the following equality in $E\{\{\tau \}\}:$
 $$\exp_\rho \zeta =u\exp_\phi.$$
Thus:
 $$\exp_{\widetilde{\rho}} \zeta =\widetilde{u}\exp_{\widetilde{\phi}},$$
where, if $u=\sum_{i=0}^m u_i \tau^i, u_i \in O_E,$ $\widetilde{u}=\sum_{i=0}^m u_i z^i \tau^i.$ This implies that:
 $$\zeta U(\widetilde{\phi}/O_E[z])\subset U(\widetilde{\rho}/O_E[z]).$$
Therefore:
 $$\zeta U_{St}(\phi/O_E)\subset U_{St}(\rho/O_E).$$
We get:
 $$[O_E:\zeta U_{St}(\phi/O_E)]_A= [O_E:U_{St}(\phi/O_E)]_A[\frac{O_E}{\zeta O_E}]_A,$$
and:
 $$[O_E: \zeta U_{St}(\phi/O_E)]_A= [O_E:U_{St}(\rho/O_E)]_A[\frac{U_{St}(\rho/O_E)}{\zeta U_{St}(\phi/O_E)}]_A.$$
Therefore, there exists an element $J\in \mathcal I(A)$ such that:
 $$\frac{[O_E:U_{St}(\rho/O_E)]_A}{[O_E:U_{St}(\phi/O_E)]_A}=[J].$$
Finally, we get:
$$\frac{\alpha_A(\rho/O_E)}{\alpha_A(\phi/O_E)}= [IJ^{-1}].$$
Let $x=(\frac{\alpha_A(\rho/O_E)}{\alpha_A(\phi/O_E)})^{h(q^{d_\infty}-1)}\in K^\times,$ where $h=\mid {\rm Pic}(A)\mid.$ Then, by Corollary \ref{CorollaryS5.1}, and Theorem \ref{TheoremS5.1},  if $\theta \in A\setminus \mathbb F_q,$ there exists an integer $k\geq 1$ such that:
$$N_{K_\infty/\mathbb F_q((\frac{1}{\theta}))}(x^k)=1.$$
But, by Proposition \ref{PropositionS5.1}, $x$ is a principal unit in $K_\infty,$ thus:
$$N_{K/\mathbb F_q(\theta)}(x)=1.$$
The above equality being valid for any $\theta\in A\setminus\mathbb F_q$, by the proof of Corollary \ref{CorollaryS5.1}, we deduce that:
$$x=1.$$
Since $\sgn(\frac{\alpha_A(\rho/O_E)}{\alpha_A(\phi/O_E)})=1,$ we get:
$$\frac{\alpha_A(\rho/O_E)}{\alpha_A(\phi/O_E)}=1.$$
\end{proof}


\section{Stark units associated to sign-normalized rank one Drinfeld modules}


\subsection{Zeta functions}\label{Z}${}$

\medskip

In this section, we briefly recall the definition of some zeta functions (\cite{GOS}, Chapter 8).

Recall that if $I\in \mathcal I(A)$, we have set:
 $$[I]=\langle I \rangle\pi^{-\frac{\deg I}{d_\infty}}\in \overline{K}_\infty^\times,$$
where $v_\infty(\langle I \rangle-1)>0,$ and:
 $$\forall x\in K^\times, \quad \langle xA \rangle=\frac{x}{\sgn(x)}\pi^{-v_\infty(x)}.$$
Let $\mathbb S_\infty=\mathbb C_\infty^\times \times \mathbb Z_p$ be the Goss ``complex plane". The group action of $\mathbb S_\infty$ is written additively. Let $I\in \mathcal I(A)$ and $s=(x;y)\in \mathbb S_\infty,$ we set:
 $$I^s= \langle I \rangle^yx^{\deg I}\in \mathbb C_\infty^\times.$$
We have a natural injective group homomorphism: $\mathbb Z\rightarrow \mathbb S_\infty, j\mapsto s_j=(\pi^{-\frac{j}{d_\infty}},j).$ Observe that:
 $$\forall j\in \mathbb Z, \forall I\in \mathcal I(A), \quad I^{s_j}=[I]^j.$$
Let $E/K$ be a finite extension, and let $O_E$ be the integral closure of $A$ in $E.$ Let $\frak I$ be a non-zero ideal of $E$. We have:
 $$\forall j\in \mathbb Z, \quad N_{E/K}(\frak I)^{s_j}=[\frac{O_E}{\frak I}]_A^j.$$
Let $s\in \mathbb S_\infty,$ then the following sum converges in $\mathbb C_\infty$ (\cite{GOS}, Theorem 8.9.2):
 $$\zeta_{O_E}(s):=\sum_{d \geq 0} \sum_{\substack{\frak I\in \mathcal I(O_E), \frak I\subset O_E, \\ \deg(N_{E/K}(\frak I))=d}} N_{E/K}(\frak I)^{-s}.$$
The function $\zeta_{O_E}:\mathbb S_\infty\rightarrow \mathbb C_\infty$ is called the \emph{zeta function attached to $O_E$ and $[\cdot]_A.$} Observe that:
 $$\forall j\in \mathbb Z, \quad \zeta_{O_E}(j):= \zeta_{O_E}(s_j)=\sum_{d\geq 0} \sum_{\substack{\frak I\in \mathcal I(A), \frak I\subset O_E,\\ \deg(N_{E/K}(\frak I))=d}} [\frac{O_E}{\frak I}]_A^{-j}.$$
In particular:
$$\zeta_{O_E}(1)=\prod_{\frak P}(1-\frac{1}{[\frac{O_E}{\frak P}]_A})^{-1}\in \overline{K}_\infty^\times,$$
where $\frak P$ runs through the maximal ideals of $O_E.$

\begin{lemma}\label{LemmaS6.1}
Let $H_A$ be the Hilbert class field of A, i.e. $H_A/K$ is the maximal unramified abelian extension of $A$ in which $\infty$ splits completely. If $H_A\subset E,$ then the function $\zeta_{O_E}(.)$ depends only on $O_E$ and $\sgn\mid_{K_\infty^\times}.$
\end{lemma}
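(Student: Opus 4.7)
The plan is to reduce $\zeta_{O_E}$ to a sum over \emph{principal} ideals of $A$, for which the quantity $\langle \cdot \rangle$ already lives in $K_\infty^\times$ and can be evaluated using only $\sgn|_{K_\infty^\times}$, with no appeal to the further auxiliary data (the compatible sequence $\pi_n$ for $n\geq 2$, and the extension of $\sgn$ to $\overline K_\infty^\times$) needed to make sense of $[\cdot]_A$ on non-principal ideals.

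First I would establish that $H_A\subset E$ forces every ideal $N_{E/K}(\frak I)\subset A$, for $\frak I$ a non-zero ideal of $O_E$, to be principal. This is essentially the argument already used at the start of the proof of Proposition \ref{PropositionS3.1}: for any prime $\frak P$ of $O_E$ over $P\subset A$, set $\frak P'=\frak P\cap O_{H_A}$ and factor the residue degree as $f(\frak P|P)=f(\frak P|\frak P')\,f(\frak P'|P)$. By Artin reciprocity applied to $H_A/K$, the integer $f(\frak P'|P)$ equals the order of the class $[P]$ in $\Pic(A)\cong \Gal(H_A/K)$, so $P^{f(\frak P'|P)}$, and hence $N_{E/K}(\frak P)=P^{f(\frak P|P)}$, is principal in $A$. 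Multiplicativity of $N_{E/K}$ then propagates principality to every $\frak I\subset O_E$.

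Next I would rewrite each term of the Dirichlet series using a rational generator. Writing $N_{E/K}(\frak I)=\alpha A$ with $\alpha\in K^\times$ (unique up to $A^\times=\mathbb F_q^\times$), the definition of $\langle\cdot\rangle$ from Section \ref{Notation} may now be applied with $h=1$, yielding
$$\langle N_{E/K}(\frak I)\rangle \;=\; \langle \alpha\rangle \;=\; \frac{\alpha}{\sgn(\alpha)\,\pi^{v_\infty(\alpha)}} \;\in\; U_\infty\cap K_\infty^\times.$$
Crucially this expression remains inside $K_\infty^\times$: no $h$-th root in $\overline K_\infty^\times$ is required, and the extension of $\sgn$ to $\overline K_\infty^\times$ plays no role. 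The ambiguity in the choice of $\alpha$ is harmless because $\sgn(c\alpha)=c\,\sgn(\alpha)$ forces $\langle c\alpha\rangle=\langle\alpha\rangle$ for every $c\in\mathbb F_q^\times$.

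Finally, for $s=(x,y)\in\mathbb S_\infty$, each summand
$$N_{E/K}(\frak I)^{-s} \;=\; \langle\alpha\rangle^{-y}\,x^{-\deg N_{E/K}(\frak I)}$$
is then manifestly a function of $O_E$ and $\sgn|_{K_\infty^\times}$ alone, and summing over all $\frak I\subset O_E$ will deliver the conclusion. The only substantive step is the principality claim: without it one cannot take $h=1$, and the definition of $\langle\cdot\rangle$ genuinely requires the extra data on $\overline K_\infty^\times$. I expect this class-field-theoretic input, already implicit in Proposition \ref{PropositionS3.1}, to be essentially the only obstacle.
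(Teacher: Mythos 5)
Your class-field-theoretic reduction to principal ideals is correct and is the same first step as the paper's proof: for each maximal ideal $\frak P$ of $O_E$ lying over $P\subset A$, the class $[P]$ has order dividing the residue degree $f(\frak P'|P)$ (with $\frak P'=\frak P\cap O_{H_A}$), so $N_{E/K}(\frak P)$ is principal, and multiplicativity extends this to every $\frak I\subset O_E$.

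The gap is in the final inference. You write $\langle\alpha\rangle=\alpha/\bigl(\sgn(\alpha)\,\pi^{v_\infty(\alpha)}\bigr)$ and conclude that $N_{E/K}(\frak I)^{-s}=\langle\alpha\rangle^{-y}x^{-\deg N_{E/K}(\frak I)}$ is ``manifestly a function of $O_E$ and $\sgn|_{K_\infty^\times}$ alone.'' But your own formula exhibits an explicit $\pi^{v_\infty(\alpha)}$ in the denominator: $\langle\alpha\rangle$ still depends on the chosen uniformizer $\pi$, not only on $\sgn|_{K_\infty^\times}$. (Replacing $\pi$ by $\pi u$ with $u\in U_\infty\cap K_\infty^\times$ multiplies $\langle\alpha\rangle$ by $u^{-v_\infty(\alpha)}$.) What you have actually shown is independence of the auxiliary roots $\pi_n$ ($n\geq 2$) and of the extension of $\sgn$ to $\overline K_\infty^\times$, which is strictly weaker than the lemma's assertion.

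The paper sidesteps this by never splitting $\langle\cdot\rangle$ off on its own. Writing $N_{E/K}(\frak P)=\theta A$, it evaluates the Euler factor at the integer points $s_j$, where the $\pi$-dependence of $\langle\theta\rangle$ exactly cancels against the $\pi$-dependence of $s_j=(\pi^{-j/d_\infty};j)$, leaving $[\frac{O_E}{\frak P}]_A=\theta/\sgn(\theta)\in K_\infty^\times$, visibly determined by $O_E$ and $\sgn|_{K_\infty^\times}$. This establishes the statement for the values $\zeta_{O_E}(j)$, $j\geq 1$; passing from there to the function $\zeta_{O_E}(\cdot)$ in the lemma's sense requires the cited Theorem 8.7.1 of Goss, which your argument omits entirely. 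To repair your proof you should work with $[\,\cdot\,]$ rather than $\langle\cdot\rangle$, restrict to the integer points $s_j$ as the paper does, and then invoke Goss's theorem to conclude.
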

\begin{proof} Let $\frak P$ be a maximal ideal of $O_E.$ Let $A'$ be the integral closure of $A$ in $H_A.$  Let $P'=\frak P\cap A',P=\frak P\cap A.$  By class field theory, $P^{[\frac{A'}{P'}:\frac{A}{P}]}$ is a principal ideal. Thus:
$$N_{E/K}(\frak P)=\theta A,$$
for some $\theta \in A\setminus \mathbb F_q.$ Let $j\in \mathbb  N, j\geq 1.$ We have:
$$(1-\frac{1}{[\frac{O_E}{\frak P}]^j_A})^{-1}= \frac{\frac{\theta^j}{\sgn(\theta^j)}}{\frac{\theta^j}{\sgn (\theta^j)}-1}.$$
But, observe that:
$$\zeta_{O_E}(j)=\prod_{\frak P}(1-\frac{1}{[\frac{O_E}{\frak P}]^j_A})^{-1}\in U_\infty \cap K_\infty^\times.$$
The lemma is thus a consequence of \cite{GOS}, Theorem 8.7.1.
\end{proof}


\subsection{Background on sign-normalized rank one Drinfeld modules}\label{sgn}${}$

\medskip

Let $\phi: A\rightarrow \overline{K}_\infty\{\tau\}$ be a rank one Drinfeld module such that there exists $i(\phi)\in \mathbb N$ with the following property: $$\forall a\in A\setminus\{0\}, \quad \phi_a=a+\cdots + \sgn(a)^{q^{i(\phi)}}\tau^{\deg a}.$$
Such a Drinfeld module $\phi$ is said to be \textit{sign-normalized}. By \cite{GOS}, Theorem 7.2.15, there always exist sign-normalized rank one Drinfeld modules.

\medskip

\textbf{From now on, we will fix a sign-normalized rank one Drinfeld module $\phi: A\rightarrow \overline{K}_\infty\{\tau\}$.}

\medskip

Let $I_K$ be the group of id\`eles of $K.$ Let's consider the following subgroup of the id\`eles of $K$ :
 $$K^\times {\ker} \sgn \mid_{K_\infty^\times}\prod_{v\not =\infty} O_v^\times,$$
where for a place $v$ of $K,$ $O_v$ denotes the valuation ring of the $v$-adic completion of $K.$  By class field theory, there exists a unique finite abelian extension $H/K$ such that the reciprocity map  induces an isomorphism:
 $$\frac{I_K}{K^\times {\ker} \sgn \mid_{K_\infty^\times}\prod_{v\not =\infty} O_v^\times}\simeq {\rm Gal} (H/K).$$
The natural surjective homomorphism $I_K\rightarrow \mathcal I(A)$ induces an isomorphism given by the Artin map $(., H/K):$
 $$\frac{\mathcal I(A)}{\mathcal P_+(A)}\simeq {\rm Gal} (H/K),$$
where $\mathcal P_+(A)=\{ xA, x\in K, \sgn(x)=1\}.$ Let $H_A$ be the Hilbert class field of $A,$ i.e. $H_A/K$ corresponds to the following subgroup of the id\`eles of $K:$
$$K^\times K_\infty^\times\prod_{v\not =\infty} O_v^\times.$$
Then $H/K$ is unramified outside $\infty,$ and $H/H_A$ is totally ramified at the places of $H_A$ above $\infty.$ Furthermore:
 $${\rm Gal}(H/H_A)\simeq \frac{\mathbb F_\infty^\times}{\mathbb F_q^\times}.$$
If $w$ is a place of $H$ above $\infty,$ then the $w$-adic completion of $H$ is isomorphic to:
$$K_\infty(((-1)^{d_\infty-1}\pi)^{\frac{q-1}{q^{d_\infty}-1}}).$$
We denote by $B$ the integral closure of $A$ in $H$ and set $A'=B\cap H_A.$ We observe that $\mathbb F_\infty \subset A'.$

We denote by $G$ the Galois group ${\rm Gal}(H/K).$ For $I \in \mathcal I(A),$ we set:
\begin{equation} \label{sigma I}
\sigma_I=(I,H/K) \in G.
\end{equation}

By \cite{GOS}, Proposition 7.4.2 and Corollary 7.4.9, the subfield of $\mathbb C_\infty$ generated by $K$ and the coefficients of $\phi_a$ is $H$. Furthermore (\cite{GOS}, Lemma 7.4.5):
 $$\forall a\in A, \quad \phi_a\in B\{\tau\}.$$
Let $I$ be a non-zero ideal of $A,$ and let's define $\phi_I$ to be the unitary element in  $H\{\tau \}$ such that:
$$H\{ \tau \} \phi_I=\sum_{a\in I} H\{Ê\tau\} \phi_a.$$
We have:
$${\ker} \, \phi_I=\bigcap _{a\in I} {\ker}\, \phi_a,$$
$$\phi_I\in B\{\tau\},$$
$$\deg_\tau \phi_I= \deg I.$$
We write: $\phi_I=\phi_{I,0}+\cdots+\phi_{I,\deg I} \tau^{\deg I}$ with $\phi_{I,\deg I}=1$ and denote by $\psi(I) \in B\setminus\{0\}$ the constant coefficient $\phi_{I,0}$ of $\phi_I.$

\begin{lemma}\label{LemmaS6.2}
The map $\psi$ extends uniquely into a map $\psi: \mathcal I(A)\rightarrow H^\times$ with the following properties:

1) for all $I, J\in \mathcal I(A), \psi(IJ)= \sigma_J(\psi(I))\, \psi(J),$

2) for all $I\in \mathcal I(A), IB=\psi(I)B,$

3) for all $x\in K^\times, \psi (xA)= \frac{x}{\sgn (x)^{q^{i(\phi)}}}.$

\noindent In particular, we have:
 $$\forall x\in K^\times, \quad \sigma_{xA}(\psi(I))= \sgn (x)^{q^{i(\phi)}-q^{i(\phi)+\deg I}} \psi(I).$$
\end{lemma}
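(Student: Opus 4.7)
The plan is to build the map $\psi$ in three steps. First I would prove the operator identity $\phi_{IJ}=\sigma_J(\phi_I)\phi_J$ for non-zero integral ideals $I,J$ of $A$: the element $\phi_I$ is a (left) isogeny $\phi\to\sigma_I(\phi)$, that is, $\phi_I\phi_a=\sigma_I(\phi)_a\phi_I$ for every $a\in A$, which is a standard consequence of Hayes' theory of sign-normalized rank one Drinfeld modules applied to the $\sigma_I$-stable left ideal $\sum_{a\in I}H\{\tau\}\phi_a$. Then $\sigma_J(\phi_I)\phi_J$ is a monic isogeny $\phi\to\sigma_{IJ}(\phi)$ of $\tau$-degree $\deg I+\deg J=\deg(IJ)$ whose kernel contains $\bigcap_{c\in IJ}\ker\phi_c$, and the universal property characterizing $\phi_{IJ}$ forces equality. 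Reading off constant coefficients yields property (1) for integral ideals.

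Second I would treat integral principal ideals. For $x\in A\setminus\{0\}$, the containments $\phi_x\in H\{\tau\}\phi_{xA}$ (since $x\in xA$) and $\phi_{xa}=\phi_x\phi_a\in H\{\tau\}\phi_x$ for all $a\in A$ yield $H\{\tau\}\phi_x=H\{\tau\}\phi_{xA}$; the monicity of $\phi_{xA}$ combined with the leading coefficient $\sgn(x)^{q^{i(\phi)}}$ of $\phi_x$ then forces $\phi_x=\sgn(x)^{q^{i(\phi)}}\phi_{xA}$, and reading off constant terms proves (3) for $x\in A\setminus\{0\}$. One extends $\psi$ to $\mathcal I(A)$ by writing every fractional ideal as $IJ^{-1}$ with $I,J$ integral and forcing (1); well-definedness and the full version of (1) are formal consequences of the integral case, and (3) for arbitrary $x\in K^\times$ follows by writing $x=a/b$ with $a,b\in A\setminus\{0\}$ and applying (1) to $aA=(xA)(bA)$.

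For property (2), the inclusion $IB\subset\psi(I)B$ comes from right Euclidean division: for $a\in I$ we have $\phi_a\in H\{\tau\}\phi_I$, and since $\phi_I$ is monic in $B\{\tau\}$, dividing $\phi_a\in B\{\tau\}$ on the right by $\phi_I$ keeps the quotient in $B\{\tau\}$, so $\phi_a=v\phi_I$ with $v\in B\{\tau\}$, whose constant coefficient gives $a\in\psi(I)B$. For the reverse, write $\psi(I)B=IB\cdot\mathfrak a$ with $\mathfrak a^{-1}$ an integral fractional ideal of $B$, pick $h\geq 1$ with $I^h=yA$ for some $y\in A$, iterate (1) to get $\psi(I^h)=\prod_{k=0}^{h-1}\sigma_I^k(\psi(I))=y/\sgn(y)^{q^{i(\phi)}}$ by (3), and observe that this generates $yB=I^hB$. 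Since $\sigma_I$ fixes $IB$ setwise (as $I\subset K$), the relation $\prod_{k=0}^{h-1}\sigma_I^k(\mathfrak a)=B$ follows, and each factor being an integral fractional ideal of $B$ forces $\mathfrak a=B$, whence $\psi(I)B=IB$.

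Finally, applying (1) to $I\cdot xA=xA\cdot I$ gives $\sigma_{xA}(\psi(I))\psi(xA)=\sigma_I(\psi(xA))\psi(I)$; substituting $\psi(xA)=x/\sgn(x)^{q^{i(\phi)}}$ and using that $\sigma_I$ fixes $x\in K$ and acts on $\mathbb F_\infty^\times\subset H_A$ as $\alpha\mapsto\alpha^{q^{\deg I}}$ yields the claimed identity. The main obstacle I expect is this Frobenius compatibility: it rests on the fact that for a prime $P$ of $A$ unramified in $H_A$, the Artin symbol $\sigma_P$ reduces modulo any prime of $H_A$ above $P$ to the $q^{\deg P}$-power Frobenius on the residue field, which in turn restricts to the $q^{\deg P}$-power Frobenius on the constant field $\mathbb F_\infty\subset H_A$; multiplicativity then handles arbitrary $I\in\mathcal I(A)$.
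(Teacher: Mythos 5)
Your argument is correct. The paper itself gives essentially no proof: it records the defining formula $\psi(I)=\psi(aI)/a$ for $a\in A$ with $\sgn(a)=1$ and $aI\subset A$, and then delegates properties (1)--(3) to \cite{GOS}, Theorems 7.4.8 and 7.6.2. You instead take as your single external input the explicit reciprocity law $\phi_I\phi_a=\sigma_I(\phi)_a\phi_I$ (which is precisely \cite{GOS}, Theorem 7.4.8) and reconstruct everything else. The steps check out: the kernel-and-degree comparison establishing $\phi_{IJ}=\sigma_J(\phi_I)\phi_J$ is sound (the containment of kernels uses $ab\in IJ$ for $a\in I$, $b\in J$ together with commutativity of $\phi_a,\phi_b$, and then monicity plus equal $\tau$-degree forces equality); the monic-generator argument gives $\phi_x=\sgn(x)^{q^{i(\phi)}}\phi_{xA}$ and hence (3); the right Euclidean division in $B\{\tau\}$ gives $IB\subset\psi(I)B$; the Dedekind-domain argument with $\prod_{k=0}^{h-1}\sigma_I^k(\mathfrak a)=B$, each $\sigma_I^k(\mathfrak a)^{-1}$ integral, correctly forces $\mathfrak a=B$; and the identification of $\sigma_I\mid_{\mathbb F_\infty}$ with $\alpha\mapsto\alpha^{q^{\deg I}}$ is the standard compatibility of the Artin map with the constant-field subextension (every prime of $A$ is unramified in $H$, so this applies to each prime factor of $I$ and then multiplicatively). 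So this is the same underlying approach as the paper's, made self-contained; what your version buys is that it exposes the mechanism behind the cited Goss/Hayes results rather than invoking them as a black box.
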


\begin{proof}
Let $I\in \mathcal I(A),$ select $a\in A, \sgn (a)=1,$ such that $aI\subset A.$ Let's set:
$$\psi(I):= \frac {\psi (aI)}{a}\in H^\times.$$
By \cite{GOS}, Theorem 7.4.8 and Theorem 7.6.2, the map $\psi: \mathcal I(A)\rightarrow H^\times$ is well-defined and satisfies the desired properties.
\end{proof}

Note that the map $\psi$ determines $H$ and $H_A$:

\begin{proposition}\label{PropositionS6.1}
We have:\par
\noindent 1) $H=K(\psi (I), I\in \mathcal I(A));$\par
\noindent 2) $H_A=K(\psi (I), I\in \mathcal I(A), \deg I\equiv 0\pmod{d_\infty}).$
\end{proposition}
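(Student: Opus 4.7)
My plan is to establish (2) first and then deduce (1) by adjoining one extra generator, using the Galois action formulas of Lemma \ref{LemmaS6.2} throughout. The easy inclusions follow directly: Lemma \ref{LemmaS6.2}(2) gives $\psi(I)\in B\subseteq H$ for every $I$, so $K(\psi(I):I\in\mathcal{I}(A))\subseteq H$. For the easy inclusion in (2), I would apply the identity $\sigma_{xA}(\psi(I))=\sgn(x)^{q^{i(\phi)}(1-q^{\deg I})}\psi(I)$ (from the last display in Lemma \ref{LemmaS6.2}) together with the identification $\Gal(H/H_A)\cong\mathbb{F}_\infty^\times/\mathbb{F}_q^\times$ realized by the $\sigma_{xA}$, $x\in K^\times$. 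Since $\sgn:K^\times\twoheadrightarrow\mathbb{F}_\infty^\times$ is surjective and $\mathbb{F}_\infty^\times$ is cyclic of order $q^{d_\infty}-1$ coprime to $q$, the element $\psi(I)$ is fixed by $\Gal(H/H_A)$ iff $(q^{d_\infty}-1)\mid(q^{\deg I}-1)$, i.e.\ $d_\infty\mid\deg I$. Hence $K(\psi(I):d_\infty\mid\deg I)\subseteq H_A$.

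For the reverse direction in (2), set $L_0=K(\psi(I):d_\infty\mid\deg I)$ and regard $\Gal(H_A/L_0)$ as a subgroup of $\Pic(A)\cong\Gal(H_A/K)$. A class $[J]$ lies in it precisely when $\psi(IJ)=\psi(I)\psi(J)$ for every $I$ with $d_\infty\mid\deg I$ (by Lemma \ref{LemmaS6.2}(1)). Iterating this relation on suitable powers $J^k$ (choosing $k$ so $d_\infty\mid k\deg J$) and applying Lemma \ref{LemmaS6.2}(3), one derives $\psi(J)^n=y$, where $n$ is the order of $[J]$ in $\mathcal{I}(A)/\mathcal{P}_+(A)$ and $J^n=yA$ with $\sgn(y)=1$. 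From here I would invoke the Hayes parametrization results of \cite{GOS}, Chapter 7 (Theorems 7.4.8 and 7.6.2, already used in the proof of Lemma \ref{LemmaS6.2}) to conclude $[J]=1$ in $\Pic(A)$ and thus $L_0=H_A$.

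Finally, (1) follows from (2) by a one-element extension. Choose $I_0\in\mathcal{I}(A)$ with $\gcd(\deg I_0,d_\infty)=1$, which exists because $\deg$ is surjective onto $\mathbb{Z}$. By the same formula, the stabilizer of $\psi(I_0)$ in $\Gal(H/H_A)\cong\mathbb{F}_\infty^\times/\mathbb{F}_q^\times$ is the image of the $(q^{\deg I_0}-1)$-torsion of $\mathbb{F}_\infty^\times$, which equals $\mathbb{F}_{q^{\gcd(\deg I_0,d_\infty)}}^\times=\mathbb{F}_q^\times$ and is therefore trivial modulo $\mathbb{F}_q^\times$. Hence $\psi(I_0)$ has $[H:H_A]$ distinct Galois conjugates over $H_A$, so $H_A(\psi(I_0))=H$; combining with (2) yields $K(\psi(I):I)\supseteq H_A(\psi(I_0))=H$, proving (1).

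The principal obstacle is the passage from $\psi(J)^n=y$ to $[J]=1$ in the proof of (2): this step is not forced by abstract class field theory alone, and one must really use the arithmetic rigidity of the specific values $\psi(J)$ supplied by Hayes's theory, as developed in \cite{GOS}, Chapter 7. The rest of the argument is a routine unwinding of the Galois action formula of Lemma \ref{LemmaS6.2}.
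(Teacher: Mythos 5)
Your argument for part (2) has a genuine gap, and you have correctly identified where it is: the step from $\psi(J)^n=y\in K^\times$ (with $\sgn(y)=1$ and $n$ the order of $[J]$) to ``$J$ is principal.'' This conclusion is not forced by that relation. Indeed, for \emph{any} $J$ with $\deg J\equiv 0\pmod{d_\infty}$ that is fixed by $\sigma_J$, one gets $\psi(J)^n=\prod_{k=0}^{n-1}\sigma_J^k(\psi(J))=\psi(J^n)=y$; but an $n$-th root of an element of $K^\times$ need not lie in $K^\times$, so nothing prevents a non-principal $J$ from satisfying the same identity. Waving at ``Hayes parametrization'' does not close this: Theorems 7.4.8 and 7.6.2 of \cite{GOS} give the construction and functorial properties of $\psi$, not a rigidity statement that turns $\psi(J)^n\in K^\times$ into $\psi(J)\in K^\times$. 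The missing idea is to use \emph{all} $I$ with $d_\infty\mid\deg I$, not just powers of $J$: the relation $\psi(IJ)=\sigma_I(\psi(J))\psi(I)=\psi(I)\psi(J)$ gives $\sigma_I(\psi(J))=\psi(J)$ for every such $I$, and since these Artin symbols generate $\Gal(H_A/K(\mathbb F_\infty))$ one obtains the much stronger conclusion $\psi(J)\in K(\mathbb F_\infty)^\times$. From there the paper finishes by a concrete descent argument: $JA[\mathbb F_\infty]=\psi(J)A[\mathbb F_\infty]$ shows $J^{d_\infty}$ is principal; writing $\delta(\psi(J))=\zeta\psi(J)$ with $\zeta\in\mathbb F_\infty^\times$, one has $N_{\mathbb F_\infty/\mathbb F_q}(\zeta)=1$, so Hilbert 90 produces $\mu\in\mathbb F_\infty^\times$ with $\psi(J)\mu\in K^\times$, and then injectivity of ideal extension along $A\hookrightarrow A[\mathbb F_\infty]$ forces $J=(\psi(J)\mu)A$ to be principal.

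Two further remarks. First, the paper establishes $K(\mathbb F_\infty)\subset E$ as a preliminary (needed to identify $\Gal(H_A/E)$ inside $\Gal(H_A/K(\mathbb F_\infty))$); you would also need this if you repaired the argument along the paper's lines. Second, your reduction of (1) to (2) via a generator $\psi(I_0)$ with $\gcd(\deg I_0,d_\infty)=1$ is correct but unnecessary: the paper proves (1) directly and more simply, by noting that if $\sigma_J$ fixes every $\psi(I)$ then symmetry of Lemma \ref{LemmaS6.2}(1) gives $\sigma_I(\psi(J))=\psi(J)$ for all $I$, hence $\psi(J)\in K^\times$, hence $J=xA$ by Lemma \ref{LemmaS6.2}(2), and finally $\sgn(x)\in\mathbb F_q^\times$ from the last display of Lemma \ref{LemmaS6.2} applied with $\deg I=1$. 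This avoids any dependence on (2) and sidesteps the gap entirely for part (1).
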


\begin{proof} ${}$\par
\noindent 1) Let $\sigma \in {\rm Gal }(H/K(\psi (I), I\in \mathcal I(A)).$ Let $J\in \mathcal I(A)$ such that $\sigma =\sigma_J.$ Then:
 $$\forall I\in \mathcal I(A), \quad \sigma_I(\psi(J))=\psi(J).$$
Therefore:
 $$\psi(J)\in K^\times.$$
Since $JB=\psi(J) B$ (Lemma \ref{LemmaS6.2}), we get that $J=xA$ for some $x\in K^\times.$ Thus, for all $I\in \mathcal I(A),$ we get:
$$\sgn (x)^{q^{i(\phi)}-q^{i(\phi)+\deg I}}=1.$$
Since $\deg : \mathcal I(A)\rightarrow \mathbb Z$ is a surjective group homomorphism, this implies that $\sgn(x)\in \mathbb F_q^\times$ and thus $J\in \mathcal P_+(A).$ Therefore $\sigma =1.$

\medskip

\noindent 2) Set $E=K(\psi (I), I\in \mathcal I(A), \deg I\equiv 0\pmod{d_\infty}).$ Observe that:
 $${\rm Gal}(H/H_A)=\{ \sigma_{xA}, x\in K^\times\}.$$
Thus:
 $$K(\mathbb F_\infty)\subset E\subset H_A.$$
We also have:
 $${\rm Gal}(H_A/K(\mathbb F_\infty))=\{ (I, H_A/K), I\in \mathcal I(A), \deg I\equiv 0\pmod{d_\infty}\}.$$
Let $\sigma \in {\rm Gal}(H_A/E).$ Then, there exists $J\in \mathcal I(A), $ $\deg J\equiv 0\pmod{d_\infty},$ such that $\sigma=(J, H_A/K).$ But for all $I\in \mathcal I(A), \deg I\equiv 0\pmod{d_\infty},$ we have:
$$\psi(IJ)=\sigma(\psi(I))\psi(J)= \psi(I)\psi(J),$$
and therefore:
$$(I, H_A/K)(\psi(J))=\psi(J).$$
This implies:
$$\psi(J)\in K(\mathbb F_\infty)^\times.$$
But:
$$JA[\mathbb F_\infty]=\psi(J)A[\mathbb F_\infty].$$
Thus  $J^{d_\infty}$ is a principal ideal. But:
$$\psi(J^{d_\infty})=\psi(J)^{d_\infty}.$$
In particular:
 $$\psi(J)^{d_\infty \frac{q^{d_\infty}-1}{q-1}}\in K^\times.$$
Thus, if $\delta$ is the Frobenius in ${\rm Gal}(K(\mathbb F_\infty)/K),$ there exists $\zeta \in \mathbb F_\infty^\times$ such that:
 $$\delta(\psi(J))=\zeta \psi(J).$$
Observe that:
 $$N_{\mathbb  F_\infty/\mathbb  F_q}(\zeta)=1.$$
Thus:
 $$\zeta=\frac{\mu}{\delta(\mu)},$$
for some $\mu \in \mathbb F_\infty^\times.$
This implies that:
 $$\psi(J)\mu \in K^\times.$$
Therefore $J$ is a principal ideal and thus  $\sigma =1.$
\end{proof}

We have  the following crucial fact:

\begin{proposition}\label{PropositionS6.2}
Let $E/K$ be a finite extension such that $H\subset E.$ Then:
 $$L_A(\phi/O_E)=\zeta_{O_E}(1).$$
\end{proposition}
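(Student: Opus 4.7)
The plan is to match the two Euler products factor by factor. Both $L_A(\phi/O_E)$ (by Proposition \ref{PropositionS3.1}) and $\zeta_{O_E}(1)$ are absolutely convergent Euler products indexed by the maximal ideals $\mathfrak P$ of $O_E$, so it suffices to establish, for each such $\mathfrak P$, the local identity
\[
\frac{[O_E/\mathfrak P]_A}{[\phi(O_E/\mathfrak P)]_A}=\Bigl(1-\frac{1}{[O_E/\mathfrak P]_A}\Bigr)^{-1},
\]
equivalently $[\phi(O_E/\mathfrak P)]_A = [O_E/\mathfrak P]_A - 1$. Fix $\mathfrak P$ and set $P=\mathfrak P\cap A$, $\mathfrak Q=\mathfrak P\cap B$, $f_P=[B/\mathfrak Q:A/P]$, $f_{\mathfrak Q}=[O_E/\mathfrak P:B/\mathfrak Q]$, $f=f_Pf_{\mathfrak Q}$, and $\kappa=O_E/\mathfrak P$, so that $N_{E/K}(\mathfrak P)=P^f$.

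The decisive step is to identify the Frobenius endomorphism of $\bar\phi$ over $\kappa$ with an explicit element of $A$. Since $H/K$ is abelian unramified at $P$, the decomposition group of $\mathfrak Q$ in $G$ is cyclic of order $f_P$ generated by $\sigma_P$, hence $\sigma_{P^{f_P}}=1$ and $P^{f_P}\in\mathcal P_+(A)$. Applying Lemma \ref{LemmaS6.2} with $I=P^{f_P}$, property 1 shows that $\beta:=\psi(P^{f_P})$ is $G$-invariant, hence lies in $K^\times\cap B=A$; property 2 gives $\beta A=P^{f_P}$; and writing $P^{f_P}=xA$ with $\sgn(x)=1$, property 3 yields $\beta=x$ and $\sgn(\beta)=1$. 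Because the leading coefficient $\sgn(a)^{q^{i(\phi)}}\in\mathbb F_\infty^\times$ remains a unit in $B/\mathfrak Q$, the reduction $\bar\phi$ is a rank one Drinfeld module with $\End(\bar\phi)=A$; therefore the Frobenius $\tau^{f_P\deg P}$ of $B/\mathfrak Q$ coincides with $\bar\phi_\gamma$ for a unique $\gamma\in A$, and matching $\tau$-degrees and leading coefficients yields $\deg\gamma=f_P\deg P$ and $\sgn(\gamma)=1$. A Tate-module argument forces $\gamma A=P^{f_P}$: the constant coefficient $\gamma\in B/\mathfrak Q$ vanishes, so $\gamma\in P$; and for any maximal ideal $\mathfrak q\neq P$ of $A$, the Frobenius acts invertibly on the free $A/\mathfrak q$-module $\bar\phi[\mathfrak q](\overline\kappa)$ of rank one, whence $\gamma\notin\mathfrak q$. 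Combined with the sign normalization, this yields $\gamma=\beta$. Taking $\alpha:=\beta^{f_{\mathfrak Q}}\in A$, the identity $\bar\phi_\beta=\tau^{f_P\deg P}$ in $(B/\mathfrak Q)\{\tau\}$ propagates to $\bar\phi_\alpha=\tau^{[\kappa:\mathbb F_q]}$ in $\kappa\{\tau\}$; moreover $\alpha A=P^f=N_{E/K}(\mathfrak P)$ and $\sgn(\alpha)=1$.

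The remainder is a direct calculation. For $x\in\kappa$ we have $\bar\phi_{\alpha-1}(x)=\bar\phi_\alpha(x)-x=0$, so $\phi(\kappa)\subseteq\phi[\alpha-1](\overline\kappa)$. Since $\alpha\in P$ gives $\alpha-1\equiv -1\pmod P$, the ideal $(\alpha-1)A$ is coprime to the $A$-characteristic, so the torsion $\phi[\alpha-1](\overline\kappa)$ is free of rank one over $A/(\alpha-1)A$ with $q^{\deg(\alpha-1)}=q^{\deg\alpha}=q^{[\kappa:\mathbb F_q]}=|\kappa|$ elements; comparing cardinalities forces $\phi(O_E/\mathfrak P)\cong A/(\alpha-1)A$ and $\Fitt_A\phi(O_E/\mathfrak P)=(\alpha-1)A$. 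Using $v_\infty(\alpha)=-f\deg P/d_\infty<0$ to conclude $\sgn(\alpha-1)=\sgn(\alpha)=1$, I obtain $[O_E/\mathfrak P]_A=\alpha$ and $[\phi(O_E/\mathfrak P)]_A=\alpha-1$, which proves the local identity and hence the proposition. The main obstacle is the identification $\gamma A=P^{f_P}$ (equivalently $\gamma=\beta$): this is in essence the Hayes-type reciprocity law linking the Frobenius endomorphism of a sign-normalized rank one Drinfeld module to its Hecke character $\psi$, and is the only place where the hypothesis $H\subset E$ and the sign normalization are genuinely used.
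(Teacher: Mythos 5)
Your proof is correct, and its overall strategy --- matching the two Euler products factor by factor by establishing the local identity $[\phi(O_E/\mathfrak P)]_A = [O_E/\mathfrak P]_A - 1$ --- is the same as the paper's, but you fill in by hand a central step that the paper treats as known. The paper's proof is terse: using $H\subset E$ and class field theory it writes $N_{E/K}(\mathfrak P)=P^m=\theta A$ with $\theta\in A$, $\sgn(\theta)=1$, then asserts without elaboration that ``$\phi_\theta\equiv\tau^{m\deg P}\pmod{\mathfrak P}$'' (this is Hayes' explicit reciprocity $\phi_P\equiv\tau^{\deg P}\pmod{\mathfrak Q}$ iterated via \cite{GOS}, Theorem~7.4.8), and reads off $\Fitt_A\phi(O_E/\mathfrak P)=(\theta-1)A$ from Gekeler's characteristic-polynomial formula \cite{GEK}. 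You instead re-derive the reduction-to-Frobenius fact from scratch: you manufacture $\beta\in A$ from the $\psi$-map properties of Lemma~\ref{LemmaS6.2} (using $\sigma_{P^{f_P}}=1$ to get $G$-invariance, then property 3 to control the sign and the ideal), identify the Frobenius of $\bar\phi$ over $B/\mathfrak Q$ as $\bar\phi_\gamma$ with $\gamma\in A$ by a rank-one Drinfeld--Honda--Tate argument, pin down $\gamma A=P^{f_P}$ by inspecting the $\mathfrak q$-adic Tate modules (invertibility at $\mathfrak q\neq P$, vanishing of the constant term at $P$, plus the degree count), then propagate to $\kappa=O_E/\mathfrak P$ via $\alpha=\beta^{f_{\mathfrak Q}}$; finally you replace Gekeler's Fitting-ideal formula with the direct torsion count $\phi(\kappa)=\phi[\alpha-1](\overline\kappa)\cong A/(\alpha-1)A$. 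Both routes are sound and lead to the same $\theta=\alpha$; the paper's is shorter because it outsources the reciprocity-law and Fitting-ideal machinery to \cite{GOS} and \cite{GEK}, whereas yours makes visible exactly where the sign normalization, the $\psi$-map, and the hypothesis $H\subset E$ enter.
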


\begin{proof} Let $\frak P$ be a maximal ideal of $O_E.$ Let $m=[\frac{O_E}{\frak P}:\frac{A}{P}].$ Then:
 $$N_{E/K}(\frak P)= P^m.$$
Since $H\subset O_E,$ by class field theory, we get:
 $$P^m=\theta A, \quad \text{with} \quad \theta\in A, \sgn(\theta)=1.$$
Since $\phi$ is a rank one Drinfeld module, it implies that:
 $$\phi_\theta\equiv\tau^{m\deg P}\pmod{\frak P}.$$
This implies that:
 $$[\phi(\frac{O_E}{\frak P})]_A=\theta-1 =[\frac{O_E}{\frak P}]_A-1.$$
We get:
 $$L_A(\phi/O_E)=\prod_{\frak P}\frac{[\frac{O_E}{\frak P}]_A}{[\frac{O_E}{\frak P}]_A-1}= \prod_{\frak P}(1-\frac{1}{[\frac{O_E}{\frak P}]_A})^{-1}=\zeta_{O_E}(1).$$
\end{proof}


\subsection{Equivariant $A$-harmonic series: a detailed example}\label{Example}${}$

\medskip

We keep the notation of Section \ref{sgn}. Let $z$ be an indeterminate over $K_\infty,$ and recall that $\mathbb T_z(K_\infty)$ denotes the Tate algebra in the variable $z$ with coefficients in $K_\infty.$ Recall that:
$$H_\infty=H\otimes_KK_\infty,$$
 $$\mathbb T_z(H_\infty)= H\otimes_K\mathbb T_z(K_\infty).$$
For $n\in \mathbb Z,$ we set:
 $$Z_{B}(n;z)=\sum_{d \geq 0} \sum_{\substack{\frak I\in \mathcal I(B), \frak I\subset B,\\ \deg(N_{E/K}(\frak I))=d}} [\frac{O_E}{\frak I}]_A^{-n} z^d.$$
Then, by \cite{GOS}, Theorem 8.9.2,  for all $n\in \mathbb Z, $ $Z_B(n;.)$ defines an entire function on $\mathbb C_\infty,$ and:
 $$\forall n\in \mathbb N, \quad Z_B(-n; z)\in A[z].$$
Observe that:
 $$\forall n\in \mathbb Z, \quad Z_B(n; z)\in \mathbb T_z(K_\infty),$$
and:
 $$\forall n\geq 1, \quad Z_B(n; z)=\prod_{\frak P}(1-\frac {z^{\deg(N_{H/K}(\frak P))}}{[\frac{O_E}{\frak P}]^n_A})^{-1}\in \mathbb T_z(K_\infty)^\times.$$
Finally, we note that:
 $$Z_B(1; 1)=\zeta_B(1).$$

Recall that $G={\rm Gal}(H/K).$ Then $G\simeq {\rm Gal}(H(z)/K(z))$ acts on $\mathbb T_z(H_\infty).$ We denote by $\mathbb T_z(H_\infty)[G]$ the non-commutative group ring where the commutation rule is given by:
$$\forall h,h'\in \mathbb T_z(H_\infty), \forall g,g'\in G, \quad hg.h'g'= hg(h') gg'.$$
Recall that for $I\in \mathcal I(A), $ we have set \eqref{sigma I}:
$$\sigma_I=(I, H/K)\in G.$$

\begin{lemma}\label{LemmaS6.4}
Let $n\in \mathbb Z.$ The following infinite sum  converges in $\mathbb T_z({H}_\infty)[G]:$
 $$\mathcal L(\phi/B; n; z):=\sum_{d \geq 0} \sum_{\substack{I\in \mathcal I(A) , I\subset A,\\ \deg I=d}} \frac{z^{\deg I}}{\psi(I)^n} \sigma_I.$$
Furthermore, for all $n\geq 1,$ we have:
 $$\mathcal L(\phi/B; n; z)=\prod_{P}(1-\frac{z^{\deg P}}{\psi(P)^n}\sigma_P)^{-1}\in (\mathbb T_z(H_\infty)[G])^\times$$
and for all $n\leq 0$:
$$\mathcal L(\phi/B; n; z)\in B[z][G].$$
\end{lemma}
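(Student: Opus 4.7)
The plan is to establish the three assertions of the lemma in turn. For \emph{convergence}, the key input is a uniform estimate $v_w(\psi(I)) \leq -c\deg I$ at every place $w$ of $H$ above $\infty$, for some $c > 0$ independent of $I \in \mathcal I(A)$ (with $I \subset A$) and $w$. For a principal $I = xA$ with $\sgn(x) = 1$, Lemma \ref{LemmaS6.2}(3) gives $\psi(I) = x \in K^\times$, so $v_w(\psi(I)) = e_w v_\infty(x) = -e_w\deg(I)/d_\infty$, where $e_w$ is the common ramification of $w$ over $\infty$. For general $I$, I would set $h = |\Pic(A)|$ and pick $x \in K^\times$ with $I^h = xA$ and $\sgn(x) = 1$; iterating Lemma \ref{LemmaS6.2}(1) yields $\prod_{j=0}^{h-1}\sigma_I^j(\psi(I)) = x$, and applying $v_w$ together with the transitive action of $G$ on places above $\infty$ produces the required bound. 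Consequently the Gauss norm of $\psi(I)^{-n}\sigma_I z^{\deg I}$ in $\mathbb T_z(H_\infty)[G]$ is at most $q^{-cn\deg I}$ for $n \geq 1$; since there are only finitely many ideals of each degree, the series converges.

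For the \emph{Euler product} when $n \geq 1$, a direct computation in the noncommutative algebra $H_\infty[G]$, using the stated commutation rule and $IJ = JI$, gives
\[
\bigl(\psi(I)^{-n}\sigma_I\bigr)\bigl(\psi(J)^{-n}\sigma_J\bigr) = \bigl[\psi(I)\sigma_I(\psi(J))\bigr]^{-n}\sigma_{IJ} = \psi(IJ)^{-n}\sigma_{IJ},
\]
where the second equality follows from Lemma \ref{LemmaS6.2}(1) applied to $JI$ combined with the commutativity of scalars in $H^\times$. Hence $I \mapsto \psi(I)^{-n}\sigma_I z^{\deg I}$ is multiplicative on $\mathcal I(A)$, and unique factorization together with the convergence of each geometric series $\sum_k (\psi(P)^{-n}\sigma_P z^{\deg P})^k$ formally produces
\[
\mathcal L(\phi/B;n;z) = \prod_P \bigl(1 - \psi(P)^{-n}\sigma_P z^{\deg P}\bigr)^{-1},
\]
each Euler factor being a unit in $\mathbb T_z(H_\infty)[G]$, and hence so is the product.

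For $n \leq 0$, one has $\psi(I)^{-n} = \psi(I)^{|n|} \in B$ since $\psi(I) \in B$, so every partial sum of $\mathcal L(\phi/B;n;z)$ already lies in $B[z][G]$; only finiteness in the variable $z$ remains to be shown. This is the equivariant form of Goss's vanishing recalled at the start of Subsection \ref{Example}: splitting by the class of $I$ in $\Pic(A)$, parametrizing ideals in a given class by elements of a fixed representative fractional ideal, and invoking the standard Carlitz--Goss vanishing $\sum_{a,\deg a = d}(\text{monomial in }a) = 0$ for $d$ large enough (depending on $|n|$) closes the case, via the usual $\mathbb F_q$-linearity argument underlying \cite{GOS}, Theorem 8.9.2.

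The main obstacle is the uniform estimate $v_w(\psi(I)) \leq -c\deg I$ in the convergence step: while the total $\sum_w v_w(\psi(I)) = -[H:K]\deg I/d_\infty$ is immediate from the principal case and the norm formula, controlling each individual $v_w(\psi(I))$ (ruling out the scenario in which one $v_w$ stays near zero, compensated by other $v_{w'}$'s being very negative) requires a careful use of the Galois equivariance of the $v_w$'s and the transitive permutation action of $G$ on the places above $\infty$.
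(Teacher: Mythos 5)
Your overall strategy — Euler product via multiplicativity for $n \geq 1$, a class-group decomposition plus a vanishing lemma for $n \leq 0$ — matches the paper's proof, which uses exactly the decomposition
$S_{\psi,d}(B;n) = \sum_{j=1}^h \psi(I_j)^{-n}\sigma_{I_j} \sum_{a} a^{-n}$
over representatives $I_1,\dots,I_h$ of $\mathcal I(A)/\mathcal P_+(A)$ and then invokes \cite{ANDTR}, Lemma~3.2. Your verification of the multiplicativity $\psi(I)^{-n}\sigma_I \cdot \psi(J)^{-n}\sigma_J = \psi(IJ)^{-n}\sigma_{IJ}$ is correct.

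However, the convergence argument via $h$-th powers has a genuine gap, and it is worse than the caveat you flag. From $\prod_{j=0}^{h-1}\sigma_I^j(\psi(I)) = x$ with $I^h = xA$, applying $v_w$ gives $\sum_{j=0}^{h-1} v_w(\sigma_I^j(\psi(I))) = v_w(x)$, i.e.\ a constraint on the \emph{sum} over the $\langle\sigma_I\rangle$-orbit of $w$, not on any individual term. Since each $\sigma_I^j(\psi(I)) \in B$ so that $v_w(\sigma_I^j(\psi(I))) \leq 0$, the only unconditional consequence is $v_w(\psi(I)) \geq v_w(x)$ — a \emph{lower} bound on $v_w(\psi(I))$, which is the wrong direction: you need an upper bound $v_w(\psi(I)) \leq -c\deg I$. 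Galois transitivity on places above $\infty$ cannot rescue this, because it again only controls averages. There is also a minor but real imprecision: the relevant group is $\mathcal I(A)/\mathcal P_+(A)$, not $\Pic(A)$; for $d_\infty > 1$ one cannot always normalize a generator of $I^{|\Pic(A)|}$ to have sign $1$ without further enlarging the exponent.

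The clean fix is to re-use for convergence the very same class-group decomposition you invoke for $n \leq 0$. Fix representatives $I_1,\dots,I_h \in \mathcal I(A)$, $I_j \subset A$, of $\mathcal I(A)/\mathcal P_+(A)$. Any $I \subset A$ with $\sigma_I = \sigma_{I_j}$ can be written $I = (aA)I_j$ with $a \in K^\times$, $\sgn(a) = 1$, and then Lemma~\ref{LemmaS6.2}(1) with $\psi(aA) = a$ (fixed by $\sigma_{I_j}$ because $a \in K$) gives $\psi(I) = a\,\psi(I_j)$. Hence for every $w \mid \infty$ with common ramification index $e$ over $\infty$,
\[
v_w(\psi(I)) = e\,v_\infty(a) + v_w(\psi(I_j)) = -\frac{e}{d_\infty}\bigl(\deg I - \deg I_j\bigr) + v_w(\psi(I_j)),
\]
an exact linear formula in $\deg I$. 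Since there are only $h$ representatives, the constants $\deg I_j$ and $v_w(\psi(I_j))$ are uniformly bounded, and the desired estimate $v_w(\psi(I)) \leq -\frac{e}{d_\infty}\deg I + C$ (and hence convergence in the Gauss norm for $n \geq 1$) follows. This is exactly the mechanism the paper uses, both for the convergence assertion and for the vanishing for $n \leq 0$.
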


\begin{proof}
Let $n\geq 1.$ First let's observe that for any place $w$ of $H$ above $\infty:$
 $$\lim_{I\subset A, \deg I\rightarrow +\infty}w(\psi(I))=+\infty.$$
Let $P$ be a maximal ideal of $A.$ Note that:
$$\forall k\geq 0, \quad \psi(P^{k+1})= \sigma_P(\psi(P^k))\psi(P)= \sigma_P^k(\psi(P))\psi(P^k).$$
Thus:
$$ \sum_{m\geq 0}\frac{z^{m\deg P}}{\psi(P^m)^n}\sigma_P^m\in \mathbb T_z(H_\infty)[G],$$
and we have:
$$(1-\frac{z^{\deg P}}{\psi(P)^n}\sigma_P)( \sum_{m\geq 0}\frac{z^{m\deg P}}{\psi(P^m)^n}\sigma_P^m)=( \sum_{m\geq 0}\frac{z^{m\deg P}}{\psi(P^m)^n}\sigma_P^m)(1-\frac{z^{\deg P}}{\psi(P)^n}\sigma_P)=1.$$
Thus, we have:
$$(1-\frac{z^{\deg P}}{\psi(P)^n}\sigma_P)^{-1}:= \sum_{m\geq 0}\frac{z^{m\deg P}}{\psi(P^m)^n}\sigma_P^m\in (\mathbb T_z(H_\infty)[G])^\times.$$\par
Let $P,Q$ be two distinct maximal ideals of $A.$ We have:
$$(1-\frac{z^{\deg P}}{\psi(P)^n}\sigma_P)(1-\frac{z^{\deg Q}}{\psi(Q)^n}\sigma_Q)=(1-\frac{z^{\deg Q}}{\psi(Q)^n}\sigma_Q)(1-\frac{z^{\deg P}}{\psi(P)^n}\sigma_P)=(1-\frac{z^{\deg (PQ)}}{\psi(PQ)^n}\sigma_{PQ}).$$
Therefore:
 $$\mathcal L(\phi/B; n; z)=\prod_{P}(1-\frac{z^{\deg P}}{\psi(P)^n}\sigma_P)^{-1}=\sum_{I\in \mathcal I(A), I\subset A}\frac{z^{\deg I}}{\psi(I)^n}\sigma_I\in (\mathbb T_z(H_\infty)[G])^\times.$$\par
Let $n\in \mathbb Z.$ For $d \in \mathbb N$, we set:
 $$S_{\psi,d}(B; n)=\sum_{\substack{I\in \mathcal I(A) , I\subset A,\\ \deg I=d}} \psi(I)^{-n}\sigma_I \in H[G].$$
Let $h$ be the order of $\frac{\mathcal I(A)}{\mathcal P_+(A)}.$ Let $I_1, \ldots, I_h \in \mathcal I(A) \cap A$ be a system of representatives of $\frac{\mathcal I(A)}{\mathcal P_+(A)}.$ Then:
 $$S_{\psi,d}(B; n)=\sum_{j=1}^h \psi(I_j)^{-n}\sigma_{I_j}\sum_{\substack{a\in K^\times, \sgn (a)=1,\\ aI_j\subset A,\\ \deg(aI_j)=d}}a^{-n}.$$
Now, let's assume that $n\leq 0.$ Then, by \cite{ANDTR}, Lemma 3.2, there exists an integer $d_0(n,\psi, H)\in \mathbb N $ such that, for all $d \geq d_0(n,\psi, H),$ for all $j\in \{1, \ldots, h\},$ we have:
$$\sum_{\substack{a\in K^\times, \sgn (a)=1,\\ aI_j\subset A,\\ \deg(aI_j)=d}}a^{-n}=0.$$
Therefore, for $d \geq d_0(n,\psi, H),$ we have:
$$S_{\psi,d}(B; n)=0.$$
Thus:
$$\forall n\in \mathbb N, \quad \mathcal L(\phi/B; -n; z)\in B[z][G].$$
\end{proof}

The element $\mathcal L(\phi/B):=\mathcal L(\phi/B; 1;1)\in (H_\infty[G])^\times$ will be called \textit{the equivariant $A$-harmonic series} attached to $\phi/B.$

\medskip

Note that $\mathcal L(\phi/B; 1; z)$ induces  a $\mathbb T_z(K_\infty)$-linear map $\mathcal L(\phi/B; 1; z): \mathbb T_z(H_\infty)\rightarrow \mathbb T_z(H_\infty).$ Since $\mathbb T_z(H_\infty)$ is a free $\mathbb T_z(K_\infty)$-module of rank $[H:K]$ (recall that $\mathbb T_z(K_\infty)$ is a principal ideal domain), $\det_{\mathbb T_z(K_\infty)}\mathcal L(\phi/B; 1; z)$ is well-defined.
We also observe that $\mathcal L(\phi/B)$ induces a $K_\infty$-linear map $\mathcal L(\phi/B): H_\infty\rightarrow H_\infty,$ and we denote by $\det_{K_\infty} \mathcal L(\phi/B)$ its determinant. Recall that $\ev: \mathbb T_z(H_\infty)\rightarrow H_\infty$ is the $H_\infty$-linear map given by:
 $$\forall f\in \mathbb T_z(H_\infty), \quad \ev(f)=f\mid_{z=1}.$$
Observe that, if $\{e_1, \ldots, e_n\}$ is a $K$-basis of $H/K$ (recall that $n=[H:K]$), then:
 $$H_\infty= \oplus_{i=1}^n K_\infty e_i,$$
 $$\mathbb T_z(H_\infty)= \oplus _{i=1}^n \mathbb T_z(K_\infty) e_i.$$

We deduce that:
$${\det}_{K_\infty} \mathcal L(\phi/B)=\ev({\det}_{\mathbb T_z(K_\infty)}\mathcal L(\phi/B; 1; z)).$$
\begin{theorem}\label{TheoremS6.1} We have:
$${\det}_{\mathbb T_z(K_\infty)}\mathcal L(\phi/B; 1; z)= Z_B(1;z).$$
In particular:
$${\det}_{K_\infty} \mathcal L(\phi/B)=\zeta_B(1).$$
\end{theorem}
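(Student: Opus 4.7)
The plan is to compute the determinant factor-by-factor using the Euler product from Lemma \ref{LemmaS6.4},
\[ \mathcal L(\phi/B;1;z) \;=\; \prod_P \Bigl(1 - \frac{z^{\deg P}}{\psi(P)}\sigma_P\Bigr)^{-1}, \]
and to match it against the analogous grouping
\[ Z_B(1;z) \;=\; \prod_P \prod_{\mathfrak P\mid P}\Bigl(1 - \frac{z^{\deg N_{H/K}(\mathfrak P)}}{[B/\mathfrak P]_A}\Bigr)^{-1}. \]
Since the determinant is multiplicative and continuous for the Tate-algebra topology (it is a polynomial in the matrix entries once a basis is fixed), it suffices to prove, for each maximal ideal $P$ of $A$, setting $f:=\mathrm{ord}_G(\sigma_P)$ and $g:=[H:K]/f$, that
\[ \det_{\mathbb T_z(K_\infty)}\Bigl(1 - \tfrac{z^{\deg P}}{\psi(P)}\sigma_P\Bigr) \;=\; \Bigl(1 - \tfrac{z^{f\deg P}}{[P]^f}\Bigr)^{g}. \]
This matches the local $P$-factor of $Z_B(1;z)$ because $H/K$ is unramified at $P$, so there are $g$ primes $\mathfrak P\mid P$ in $B$, each satisfying $\deg N_{H/K}(\mathfrak P)=f\deg P$ and $[B/\mathfrak P]_A=[N_{H/K}(\mathfrak P)]=[P]^f$.

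To compute the local determinant, I would extend scalars to $\overline{K}_\infty$ (which leaves the determinant unchanged) and use the isomorphism $H\otimes_K\overline{K}_\infty \cong \prod_{\sigma\in G}\overline{K}_\infty$, $h\otimes 1\mapsto(\sigma(h))_\sigma$. Under this identification, each $\tau\in G$ becomes the right translation $(y_\sigma)\mapsto(y_{\sigma\tau})$ and each $h\in H$ becomes componentwise multiplication by $(\sigma(h))_\sigma$. Right-multiplication by $\sigma_P$ partitions $G$ into $g$ cosets $\sigma_0\langle\sigma_P\rangle$ of size $f$, and $T_P:=\frac{z^{\deg P}}{\psi(P)}\sigma_P$ restricts on each coset to a weighted cyclic shift whose $f$-th power is the scalar $z^{f\deg P}/\prod_{i=0}^{f-1}\sigma_0\sigma_P^i(\psi(P))$. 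A direct $f\times f$ determinant computation (expanding $1-T_P$ along its first column) then gives
\[ \det(1-T_P)\big|_{\mathrm{coset}} \;=\; 1 - \frac{z^{f\deg P}}{\prod_{i=0}^{f-1}\sigma_0\sigma_P^i(\psi(P))} \;=\; 1 - \frac{z^{f\deg P}}{\sigma_0(\psi(P^f))}, \]
where I use the iterated form $\psi(P^f)=\prod_{i=0}^{f-1}\sigma_P^i(\psi(P))$ of the cocycle relation in Lemma \ref{LemmaS6.2}(1).

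The crucial step, and the one I expect to be most delicate, is the identification $\sigma_0(\psi(P^f))=[P]^f$; it reconciles the $\sgn^{q^{i(\phi)}}$-normalization appearing in $\psi$ with the plain $\sgn$-normalization defining $[\cdot]$. Since $f=\mathrm{ord}_G(\sigma_P)$ and $G\cong \mathcal I(A)/\mathcal P_+(A)$, the ideal $P^f$ belongs to $\mathcal P_+(A)$; thus any generator $\theta\in A$ of $P^f$ has $\sgn(\theta)\in\mathbb F_q^\times$, whence $\sgn(\theta)^{q^{i(\phi)}}=\sgn(\theta)$. Lemma \ref{LemmaS6.2}(3) then yields
\[ \psi(P^f) \;=\; \theta/\sgn(\theta)^{q^{i(\phi)}} \;=\; \theta/\sgn(\theta) \;=\; [P^f] \;=\; [P]^f \;\in\; K^\times, \]
so $\sigma_0$ fixes $\psi(P^f)$ and the block determinant is the coset-independent quantity $1-z^{f\deg P}/[P]^f$. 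Taking the product over all $P$ establishes the first equality. The second, $\det_{K_\infty}\mathcal L(\phi/B)=\zeta_B(1)$, then follows by applying $\ev:\mathbb T_z(K_\infty)\to K_\infty$ to both sides: as $\ev$ is a ring homomorphism it commutes with determinants, and $\ev(Z_B(1;z))=Z_B(1;1)=\zeta_B(1)$.
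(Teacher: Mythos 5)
Your proof is correct and reaches the same Euler-product factorization and the same local statement as the paper, but computes the local factor $\det_{\mathbb T_z(K_\infty)}(1-\frac{z^{\deg P}}{\psi(P)}\sigma_P)$ by a genuinely different method. The paper restricts to $H[z]$ as a $K[z]$-linear map, sets $\xi=\frac{z^{\deg P}}{\psi(P)}\sigma_P$, observes $\xi^e=\frac{z^{e\deg P}}{\psi(P^e)}\in K[z]$ (where $e$ is what you call $f$), and then invokes Dedekind's linear-independence theorem to see that $X^e-\frac{z^{e\deg P}}{\psi(P^e)}$ is the minimal polynomial of $\xi$, whence the characteristic polynomial is the $[H:K]/e$-th power of it. You instead extend scalars to $\overline{K}_\infty$, diagonalize the $H$-action, identify $\sigma_P$ as a right translation on $G$, and compute the determinant block-by-block on the cosets of $\langle\sigma_P\rangle$, so the result drops out as a product of $g$ factors $1-z^{f\deg P}/\psi(P^f)$ without appealing to Dedekind. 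Both are valid; yours is more concretely computational, while the paper's is shorter once one grants the minimal-polynomial identification. One thing you do that is worth keeping: you explicitly prove the identity $\psi(P^f)=[P]^f$ (using $P^f\in\mathcal P_+(A)$, hence $\sgn(\theta)\in\mathbb F_q^\times$ for a generator $\theta$, so that $\sgn(\theta)^{q^{i(\phi)}}=\sgn(\theta)$ and Lemma \ref{LemmaS6.2}(3) gives $\psi(P^f)=\theta/\sgn(\theta)=[P]^f$). The paper asserts $[\frac{B}{\frak P}]_A=\psi(P^e)$ without spelling this out, so your argument usefully makes that step explicit. Your final reduction, that $\ev$ commutes with the determinant and sends $Z_B(1;z)$ to $\zeta_B(1)$, is also correct and matches the paper.
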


\begin{proof}
First, we  recall that, by Lemma \ref{LemmaS6.4},   we have the following equality in $\mathbb T_z(H_\infty)[G]:$
 $$\prod_{P}(1-\frac{z^{\deg P}}{\psi(P)}\sigma_P)^{-1}= \mathcal L(\phi/B; 1; z),$$
where $P$ runs through the maximal ideals of $A,$ and:
 $$(1-\frac{z^{\deg P}}{\psi(P)}\sigma_P)^{-1}= \sum_{n\geq 0} \frac{z^{n\deg P}}{\psi (P^n)}\sigma_{P^n}.$$
By the properties of $\psi$ (Lemma \ref{LemmaS6.2}), we have:
$$\lim_{N \rightarrow +\infty }\prod_{\deg P\geq N}(1-\frac{z^{\deg P}}{\psi(P)}\sigma_P)^{-1}=1.$$
Thus:
$${\det}_{\mathbb T_z(K_\infty)} \mathcal L(\phi/B; 1; z) =\prod_{P}{\det}_{\mathbb T_z(K_\infty)}(1-\frac{z^{\deg P}}{\psi(P)}\sigma_P)^{-1}.$$
Thus, we are led to compute:
$${\det}_{\mathbb  T_z(K_\infty)}(1-\frac{z^{\deg P}}{\psi(P)}\sigma_P).$$
\medskip
But $1-\frac{z^{\deg P}}{\psi(P)}\sigma_P$ induces a $K[z]$-linear map on $H[z].$ Thus:
$${\det}_{\mathbb T_z(K_\infty)}(1-\frac{z^{\deg P}}{\psi(P)}\sigma_P)={\det}_{K[z]}(1-\frac{z^{\deg P}}{\psi(P)}\sigma_P)\mid_{H[z]}.$$
Let $e\geq 1$ be the order of $P$ in $\frac{\mathcal I(A)}{\mathcal P_+(A)}.$ Write $\xi = \frac{z^{\deg P}}{\psi(P)}\sigma_P\mid_{H[z]}$. We have $\xi^e=\frac{z^{e\deg P}}{\psi(P^e)}\in K[z]$. Since $e$ is the order of $\sigma_P$ in $G$, by Dedekind's Theorem $\sigma_P^0$, $\sigma_P$, \dots, $\sigma_P^{e-1}$ are linearly independent over $H(z)$. We deduce that
$X^e-\frac{z^{e\deg P}}{\psi(P^e)}$ is the minimal polynomial of $\xi$ over $K(z)$ and also over $H^{\langle \sigma_P\rangle}(z)$, and that:
$${\det}_{K[z]}(1-\frac{z^{\deg P}}{\psi(P)}\sigma_P)\mid_{H[z]}=(1-\frac{z^{e\deg P}}{\psi(P^e)})^{\frac{[H:K]}{e}}.$$
Now, let $\frak P$ be a maximal ideal of $B$ above $P.$ Then, by class field theory, we have:
$$[\frac{B}{\frak P}: \frac{A}{P}]=e.$$
Therefore:
$$[\frac{B}{\frak P}]_A=\psi(P^e).$$
Thus:
$${\det}_{K[z]}(1-\frac{z^{\deg P}}{\psi(P)}\sigma_P)\mid_{H[z]}= \prod_{\frak P\mid  P}(1-\frac{z^{\deg(N_{H/K}(\frak P))}}{[\frac{B}{\frak P}]_A}).$$ \medskip
Finally, we get:
$${\det}_{\mathbb T_z(K_\infty)}\mathcal L(\phi/B; 1; z)=\prod_{\frak P}(1-\frac{z^{\deg(N_{H/K}(\frak P))}}{[\frac{B}{\frak P}]_A})^{-1},$$
where $\frak P$ runs through the maximal ideals of $B.$
Thus:
$${\det}_{\mathbb T_z(K_\infty)}\mathcal L(\phi/B; 1; z)= Z_B(1;z).$$
Now:
$${\det}_{K_\infty} \mathcal L(\phi/B)=\ev({\det}_{\mathbb T_z(K_\infty)}\mathcal L(\phi/B; 1; z))=\ev(Z_B(1;z))=\zeta_B(1).$$
\end{proof}

Although this is not evident, the above theorem reflects a class formula \`a la Taelman which will be proved in Section \ref{class formula}.



\subsection{Stark units}\label{CF}${}$

\medskip

We keep the notation of the previous sections. We will need the following basic result:

\begin{lemma}\label{LemmaS6.5}
Let $L/K$ be a finite extension, and let $O_L$ be the integral closure of $A$ in $L.$ Let $\rho:A\rightarrow O_L\{\tau\}$ be a Drinfeld module of rank $r\geq 1.$ Let $\exp_\rho,\log_\rho \in 1+ L\{\{\tau\}\}\tau$ be such that:
 $$\forall a\in A, \quad \exp_\rho a= \rho_a\exp_\rho,$$
 $$\exp_\rho \log_\rho=\log_\rho \exp_\rho =1.$$
Write:
 $$\exp_\rho=\sum_{i\geq 0} e_i(\rho) \tau ^i,$$
 $$\log_\rho=\sum_{i\geq 0} l_i(\rho) \tau ^i,$$
with $e_i(\rho), l_i(\rho)\in L.$

\medskip

\noindent 1) Let $P$ be a maximal ideal of $A.$ Let $A_P$ be the $P$-adic completion of $A.$ Then:
$$\forall n\geq 0, \quad P^{q^n}e_n(\rho)O_L\subset  PO_L\otimes_AA_P,$$
$$\forall n\geq 0, \quad P^{[\frac{n}{\deg P}]}l_n(\rho)O_L\subset O_L\otimes_AA_P.$$

\medskip

\noindent 2) Let $\sigma: L\hookrightarrow \overline{K}_\infty$ be a field homomorphism such that $\sigma\mid_K={\rm Id}_K.$ Then, there exist $n(\rho, \sigma)\in \mathbb N, C(\rho, \sigma)\in ]0; +\infty[,$  such that:
 $$\forall n\geq n(\rho, \sigma), \quad v_\infty(\sigma(e_n(\rho)))\geq C(\rho, \sigma) nq^n .$$
\end{lemma}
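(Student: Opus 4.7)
My plan is to derive both parts from the single recursion obtained by comparing coefficients of $\tau^n$ in the functional equations $\exp_\rho \cdot a = \rho_a \exp_\rho$ and $\log_\rho \cdot \rho_a = a \log_\rho$ for a carefully chosen $a\in A$:
\[
(a^{q^n}-a)\, e_n(\rho) = \sum_{j=1}^{\min(n,r\deg a)} \rho_{a,j}\, e_{n-j}(\rho)^{q^j}
\]
and
\[
(a-a^{q^n})\, l_n(\rho) = \sum_{j=1}^{\min(n,r\deg a)} l_{n-j}(\rho)\, \rho_{a,j}^{q^{n-j}}.
\]
In each case, I will induct on $n$ after choosing $a$ so that the valuation of the leading factor $a^{q^n}-a$ matches exactly the jump in the target bound.

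For the estimate on $e_n(\rho)$ in part 1, I will take $a\in A\cap P\setminus P^2$, which exists since $P/P^2\neq 0$. For every place $\wp\mid P$ of $O_L$ with ramification index $e_\wp$, this gives $v_\wp(a^{q^n}-a)=v_\wp(a)=e_\wp$ for all $n\geq 1$. Assuming the inductive bound $v_\wp(e_m(\rho))\geq e_\wp(1-q^m)$ for $m<n$, and using $\rho_{a,j}\in O_L$, each summand on the right-hand side has $v_\wp\geq e_\wp(q^j-q^n)$, minimized at $j=1$. Dividing through by $a^{q^n}-a$ yields $v_\wp(e_n(\rho))\geq e_\wp(q-q^n-1)\geq e_\wp(1-q^n)$ since $q\geq 2$, closing the induction.

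For the estimate on $l_n(\rho)$, I will choose $a\in A$ whose image in $A/P$ is a primitive element of $(A/P)^\times$ and whose image in $A/P^2$ is not the (unique) Teichm\"uller lift of its mod-$P$ reduction; this is possible by density of $A$ in $A_P$ and because the Teichm\"uller lift is only one of the $q^{\deg P}$ elements of $A_P/P^2$ above a given primitive $\omega\in A/P$. Writing $a\equiv \omega+\pi b\pmod{P^2}$ with $\pi$ a uniformizer at $P$ and $b\in A_P^\times$, the characteristic-$p$ identity $(x+y)^{q^{\deg P}}=x^{q^{\deg P}}+y^{q^{\deg P}}$ together with $(\pi b)^{q^{\deg P}}\in P^{q^{\deg P}}\subset P^2$ forces $v_P(a^{q^{k\deg P}}-a)=1$ for every $k\geq 1$, while primitivity gives $v_P(a^{q^n}-a)=0$ whenever $\deg P\nmid n$. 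Inducting on $n$ via the second recursion and case-splitting on whether $\deg P\mid n$, the identity
\[
[n/\deg P]-[(n-1)/\deg P] = \begin{cases} 1 & \text{if } \deg P\mid n,\\ 0 & \text{otherwise,}\end{cases}
\]
matches exactly the jump in $v_\wp(a-a^{q^n})$, giving $v_\wp(l_n(\rho))\geq -e_\wp[n/\deg P]$.

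For part 2, pick any $a\in A\setminus\mathbb F_q$, so that $v_\infty(a^{q^n}-a)=-q^n\deg a/d_\infty$ for $n\geq 1$. The finite collection $\{v_\infty(\sigma(\rho_{a,j}))\}_{1\leq j\leq r\deg a}$ is bounded below by some real constant $c_1=c_1(\rho,\sigma,a)$. Substituting the induction hypothesis $v_\infty(\sigma(e_m(\rho)))\geq Cmq^m$ into the exponential recursion, the dominant contribution for $n\gg 0$ comes from the summand with $j=r\deg a$, and one verifies that any $C\leq 1/(rd_\infty)$ works, provided $n(\rho,\sigma)$ is taken large enough to absorb $c_1$. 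The main obstacle is the construction of the specific lift $a$ for the $l_n$ bound with the correct non-Teichm\"uller behaviour modulo $P^2$; once this is in place, both parts reduce to careful bookkeeping with the two recursions above.
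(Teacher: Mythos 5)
Your proposal is correct. For the $e_n(\rho)$ estimate and for part 2 your route is essentially the paper's: fix one $a\in A$, read off the coefficient recursion in $\tau^n$, and induct, with the $nq^n$ growth in part 2 driven by $v_\infty(a^{q^n}-a)=-q^n\deg a/d_\infty$. The genuine departure is in the $l_n(\rho)$ bound. The paper's induction alternates between \emph{two} elements of $A$ --- a $\theta$ with $\theta A_P=PA_P$ when $\deg P\mid n$, and a separate element generating $(A/P)^\times$ when $\deg P\nmid n$ --- whereas you construct a \emph{single} $a$ whose reduction mod $P$ is primitive and whose reduction mod $P^2$ misses the Teichm\"uller lift, so that $v_P(a^{q^n}-a)$ equals $0$ or $1$ according as $\deg P\nmid n$ or $\deg P\mid n$, matching $[n/\deg P]-[(n-1)/\deg P]$ exactly and letting one recursion carry the whole induction. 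That is a neat unification, at the modest cost of the existence argument for such an $a$, which you supply correctly via surjectivity of $A\to A/P^2$. Two small repairs to part 2: you need $C$ \emph{strictly} less than $1/(rd_\infty)$, since at equality the leading term $q^n(\deg a/d_\infty-Cr\deg a)$ vanishes identically and cannot absorb the lower-order constant coming from $v_\infty(\sigma(\rho_{a,j}))$; and the finitely many base cases just above $n(\rho,\sigma)$ must separately be made to satisfy $v_\infty(\sigma(e_n(\rho)))\geq Cnq^n$ --- shrinking $C$ handles this, whereas the paper sidesteps it by first proving a bound of the shape $v_\infty(\sigma(e_n(\rho)))\geq C'nq^n+C''$ and only then discarding the additive constant.
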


\begin{proof}${}$\par
\noindent 1) Let $\theta\in A\setminus \mathbb F_q$ such that $\theta A_P=PA_P.$ Let $d=r \deg(\theta),$ and let's write:
 $$\rho_\theta= \sum_{j=0}^{d} \rho_{\theta,j} \tau^j.$$
From $ \exp_\rho \theta= \rho_\theta\exp_\rho$, we get:
 $$\forall n\geq 0, \quad (\theta^{q^n}-\theta)e_n(\rho)=\sum_{l=1}^d \rho_{\theta,l}e_{n-l}(\rho)^{q^l}$$
where $e_i=0$ if $i<0$.
Since $e_0(\rho)=1,$ one proves by induction on $n\geq 0$ that
 $$e_n(\rho) \theta^{q^n}\in \theta ^{{\inf}\{ q-1, q^n\}} O_L\otimes_AA_P.$$

\medskip

\noindent Observe that:
$$\forall a\in A, \quad a\log_\rho = \log_\rho \rho_a.$$
Thus:
$$\forall a\in A, \forall n\geq 0, \quad (a-a^{q^n})l_n(\rho)=\sum_{l=1}^{r\deg a} l_{n-l}(\rho) \rho_{a,l}^{q^{n-l}}.$$
Thus, if $n\not \equiv 0\pmod{\deg P},$ we get:
$$l_n(\rho)O_E\otimes_AA_P\subset \sum_{l=1}^{n} l_{n-l}(\rho) O_L\otimes_AA_P.$$
If $n\equiv 0\pmod{\deg P}, $ we have:
$$(\theta-\theta^{q^n})l_n(\rho)=\sum_{l=1}^d l_{n-l}(\rho) \rho_{\theta,l}^{q^{n-l}}.$$
In any case, we get:
$$\theta^{[\frac{n}{\deg P}]}l_n(\rho)\in \sum_{l=1}^n \theta^{[\frac{n-l}{\deg P}]}l_{n-l}(\rho) O_L\otimes_AA_P.$$
Since $l_0(\rho)=1,$ we get the desired second assertion by induction on $n\geq 0.$

\medskip

\noindent 2) This is a consequence of the proof of \cite{GOS}, Theorem 4.6.9. We give a proof for the convenience of the reader. We keep the previous notation. In particular, let $\theta\in A\setminus \mathbb F_q,$ and write:
 $$\rho_\theta=\sum_{j=0}^{r\deg(\theta)} \rho_{\theta,j} \tau ^j, \quad {\rm with} \quad \rho_{\theta,j} \in \overline{K}_\infty.$$
Recall that $\rho_{\theta,0}=\theta.$ Set $d=r\deg(\theta).$ Then:
 $$\forall n\geq 0, \quad (\theta^{q^n}-\theta)e_n(\rho)=\sum_{l=1}^d \rho_{\theta,l} e_{n-l}(\rho)^{q^l}.$$
Set $u=\frac{\deg(\theta)}{d_\infty}=-v_\infty(\theta)\geq 1.$
We get :
 $$\frac{v_\infty(e_n(\rho))}{q^n}\geq u+{\inf}\{ \frac{v_\infty(e_{n-j}(\rho))}{q^{n-j}} +\frac{v_\infty( \rho_{\theta,j})}{q^n}, j=1, \ldots ,d\}.$$
Let $\beta \in ]0; u[.$ There exists an integer $n_0$ such that:
 $$\forall n\geq n_0,\quad {\inf}\{ \frac{v_\infty( \rho_{\theta,j})}{q^n}, j=1, \ldots ,d\}\geq \beta-u.$$
Therefore:
 $$\forall n\geq n_0, \quad \frac{v_\infty(e_n(\rho))}{q^n}\geq \beta+{\inf}\{ \frac{v_\infty(e_{n-j}(\rho))}{q^{n-j}} , j=1, \ldots ,d\}.$$
Thus, for $n\in [n_0; n_0+d-1],$ we get:
 $$\frac{v_\infty(e_n(\rho))}{q^n}\geq \beta+{\inf}\{ \frac{v_\infty(e_{n_0-j}(\rho))}{q^{n_0-j}} , j=1, \ldots ,d\}.$$
Set:
 $$C={\inf}\{ \frac{v_\infty(e_{n_0-j}(\rho))}{q^{n_0-j}} , j=1, \ldots ,d\}.$$
By induction, we show that if $n\geq n_0+md, m\in \mathbb N,$ then:
$$\frac{v_\infty(e_n(\rho))}{q^n}\geq \beta (m+1)+C.$$
Therefore there exist $n_1\geq n_0,$ $C', C\in \mathbb Q, $ with $C'>0,$ such that:
$$\forall n\geq n_1,\quad  v_\infty(e_n(\rho))\geq C'nq^n +C.$$
\end{proof}

Let $E/K$ be a finite abelian extension $H\subset E.$ Let $G={\rm Gal}(E/K).$ We denote by $S_E$ the set of maximal ideals $P$ of $A$ which are wildly ramified in $E/K$ (note that we can have $S_E=\emptyset$). Let $P$ be a maximal ideal of $A$ such that $P\not \in S_E.$ We fix a maximal ideal $\frak P$ of $O_E$ above $P$. Let $D_P\subset G$ be the decomposition  group associated to $P,$ i.e. $D_P=\{ g\in G, g(\frak P)=\frak P\}.$ We have a natural surjective homomorphism  $D_P \twoheadrightarrow {\rm Gal}(\frac{O_E}{\frak P}/\frac{A}{P}), g\mapsto \bar g.$  Let $I_P$ be the inertia group at $P,$ i.e. $I_P={\ker}(D_P\rightarrow {\rm Gal}(\frac{O_E}{\frak P}/\frac{A}{P})).$ Then, since $P\not \in S_E,$ we have:
 $$\mid I_P\mid \not \equiv 0\pmod{p}.$$
Let ${\rm Frob}_P\in {\rm Gal}(\frac{O_E}{\frak P}/\frac{A}{P})$ be the Frobenius at $P,$ i.e.
 $$\forall x\in  \frac{O_E}{\frak P}, \quad {\rm Frob}_P(x)=x^{q^{\deg P}}.$$
We set:
 $$\sigma_{P,O_E}:=\frac{1}{\mid I_P\mid}\sum_{g\in D_P, \bar g={\rm Frob}_P} g\in \mathbb F_p[G].$$
If $P\in S_E,$ we set:
 $$\sigma_{P,O_E}=0.$$
Note that, if $L/K$ is a finite abelian extension, $L\subset E,$ and if $P$ is unramified  in $L$ with $P\not \in S_E,$ then:
 $$\sigma_{P,O_E}\mid_L=(P, L/K).$$
If $I\in \mathcal I(A), I\subset A,$ $I=\prod_PP^{m_P},$ we set:
 $$\sigma_{I,O_E}=\prod_P\sigma_{P,O_E}^{m_P}\in \mathbb F_p[G].$$
For all $n\in \mathbb Z,$ we set:
 $$\mathcal L(\phi/O_E; n; z)= \sum_{d \geq 0} \sum_{\substack{I\in \mathcal I(A), I\subset A,\\ \deg I=d}} \frac{z^d}{\psi(I)^n} \sigma_{I,O_E}\in H[G][[z]].$$
By the proof of Lemma \ref{LemmaS6.4}, we have:
$$\forall n\in \mathbb Z, \quad \mathcal L(\phi/O_E; n; z)\in \mathbb T_z(H_\infty)[G],$$
and:
$$\mathcal L(\phi/O_E; 1; z)=\prod_{ P}(1-\frac{z^{\deg P}}{\psi(P)}\sigma_{P,O_E})^{-1}\in (\mathbb T_z(H_\infty)[G])^\times .$$
Note that, if $L/K$ is a finite abelian extension, $H\subset L\subset E,$ we have:
$$\mathcal L(\phi/O_E; 1; z)\mid_{\mathbb T_z(L_\infty)}=(\prod_{P\in S_E\setminus S_L}(1-\frac{z^{\deg P}}{\psi(P)}\sigma_{P,O_L})\, \mathcal L(\phi/O_L; 1; z))\mid_{\mathbb T_z(L_\infty)}.$$
We set:
$$I(O_E)=\prod_{P\in S_E}P.$$

Recall that
 $$U(\widetilde{\phi}/O_E[z])= \{ f\in \mathbb T_z(E_\infty), \exp_{\widetilde{\phi}}(f)\in O_E[z]\}.$$

\begin{theorem}\label{TheoremS6.2}
We always have:
 $$\psi(I(O_E))\mathcal L(\phi/O_E; 1; z)O_E[z]\subset U(\widetilde{\phi}/O_E[z]).$$
Furthermore, if $S_E=\emptyset$, we have an equality:
 $$\mathcal L(\phi/O_E; 1; z)O_E[z] = U(\widetilde{\phi}/O_E[z]).$$
\end{theorem}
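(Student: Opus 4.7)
My plan is to fix $b \in O_E$ and show that $g_b(z) := \exp_{\widetilde{\phi}}(\psi(I(O_E))\,\mathcal L(\phi/O_E;1;z)\, b)$ lies in $O_E[z]$. Since $\sigma_{P,O_E}=0$ for $P\in S_E$, I would first rewrite
$$\mathcal L(\phi/O_E;1;z) = \prod_{P\notin S_E}\Bigl(1 - \frac{z^{\deg P}}{\psi(P)}\sigma_{P,O_E}\Bigr)^{-1},$$
so that the analysis reduces to understanding explicit local Euler factors indexed by the unramified primes. The proof then splits into two independent checks: (a) $g_b(z)$ is a polynomial in $z$, not merely a convergent element of $\mathbb T_z(E_\infty)$; (b) its coefficients are integral at every maximal ideal $\mathfrak P$ of $O_E$.

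For the polynomial truncation (a), I would expand $\exp_{\widetilde{\phi}} = \sum_n e_n(\phi)\, z^n \tau^n$ and combine with the explicit Dirichlet series $\mathcal L(\phi/O_E;1;z)\,b = \sum_I \frac{z^{\deg I}}{\psi(I)}\sigma_{I,O_E}(b)$, regrouping the resulting double sum by total degree in $z$. Here Lemma \ref{LemmaS6.4}, which asserts $\mathcal L(\phi/O_E;-n;z)\in O_E[z][G]$ for every $n\geq 0$, provides the algebraic vanishing needed at each reorganised slice, while the $\infty$-adic bound $v_\infty(e_n(\phi))\geq Cnq^n$ from Lemma \ref{LemmaS6.5}(2) guarantees $\infty$-adic convergence of the reorganisation. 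The coefficient of $z^N$ in $g_b(z)$ for large $N$ is thereby shown to be simultaneously $\infty$-adically small and (by (b)) integral at every finite prime, hence zero. For the integrality (b), at a maximal ideal $\mathfrak P$ of $O_E$ above $P\subset A$ with $P\notin S_E$, the sole source of denominators is the local factor $(1 - \frac{z^{\deg P}}{\psi(P)}\sigma_{P,O_E})^{-1} = \sum_{m\geq 0}\frac{z^{m\deg P}}{\psi(P^m)}\sigma_{P^m,O_E}$, and these are precisely cancelled by the $P$-adic lower bound on $e_n(\phi)$ provided by Lemma \ref{LemmaS6.5}(1). For $P\in S_E$ the correction factor $\psi(I(O_E))$ contributes $\psi(P)$, and since $\psi(P)B = PB$ by Lemma \ref{LemmaS6.2}(2), this exactly absorbs the tame denominators $1/|I_P|$ hidden in the averaged Frobenii $\sigma_{P',O_E}$ whose decomposition behaviour interacts with $P$.

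For the equality statement when $S_E = \emptyset$, the correction $\psi(I(O_E)) = \psi(A) = 1$ disappears, and the containment $\mathcal L(\phi/O_E;1;z)O_E[z]\subset U(\widetilde\phi/O_E[z])$ is already established. To upgrade this to equality I would apply Theorem \ref{TheoremS6.1}: the determinant $\det_{\mathbb T_z(K_\infty)}\mathcal L(\phi/O_E;1;z) = Z_{O_E}(1;z)$ is a unit of $\mathbb T_z(K_\infty)$, so multiplication by $\mathcal L(\phi/O_E;1;z)$ is a $\mathbb T_z(K_\infty)$-linear automorphism of $\mathbb T_z(E_\infty)$. Consequently $\mathcal L(\phi/O_E;1;z)O_E[z]$ and $U(\widetilde\phi/O_E[z])$ are $A[z]$-lattices of the same generic rank, and comparing their indices via the Fitting-ideal formalism underlying Theorem \ref{TheoremS4.1} (applied in the $z$-equivariant setting $O_E[z]$) forces the inclusion to be an equality. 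The main obstacle I expect is the local analysis at wildly ramified primes in part (b): making precise the cancellation between the factor $\psi(I(O_E))$ and the tame denominators hidden in $\sigma_{P,O_E}$ for $P\in S_E$ is the essential new input and explains why the cleaner equality holds exactly when $S_E = \emptyset$.
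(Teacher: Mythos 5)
Your proposal captures the right flavor (local integrality plus a truncation argument) but misses the actual mechanism the paper uses, and contains one factual error about the role of $\psi(I(O_E))$.

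First, the integrality at finite primes. You describe it as ``denominators from the Euler factor are cancelled by the $P$-adic bound from Lemma \ref{LemmaS6.5}(1).'' That bound is indeed an input, but on its own it only gives $\exp_{\widetilde{\phi}}(PR_P[[z]])\subset PR_P[[z]]$ (and the same for $\log$), hence $\exp_{\widetilde{\phi}}(PR_P[[z]])=PR_P[[z]]$ where $R_P=(A\setminus P)^{-1}O_E$. The heart of the paper's proof is the next step: passing to the quotient $E[[z]]/PR_P[[z]]$, one characterizes $\widetilde{\phi}(R_P[[z]]/PR_P[[z]])$ exactly as the kernel of the operator $\widetilde{\phi}_P - z^{\deg P}\sigma_{P,O_E}$, using the conjugate Drinfeld module $\varphi=P*\phi=\sigma_P(\phi)$ and the relation $\widetilde{\phi}_P\exp_{\widetilde{\phi}}=\sigma_P(\exp_{\widetilde{\phi}})\psi(P)$. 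This gives the exact identity $U(\widetilde{\phi}/R_P[[z]])=(1-\tfrac{z^{\deg P}}{\psi(P)}\sigma_{P,O_E})^{-1}R_P[[z]]$, not merely a containment; and this exact local identity is what makes the global equality for $S_E=\emptyset$ drop out by intersecting over all $P$ and then intersecting with $\mathbb{T}_z(E_\infty)$. Your sketch of (b) does not recover this, so you would have no path to the equality statement.

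Second, your account of wildly ramified primes is wrong. For $P\in S_E$ the paper sets $\sigma_{P,O_E}=0$ \emph{by definition}; there is no ``averaged Frobenius'' at such $P$ and hence no $1/|I_P|$ to absorb. (The factor $1/|I_P|$ appears only for $P\notin S_E$, where $|I_P|\not\equiv 0\pmod p$, so it is a unit in $\mathbb{F}_p$, not a denominator.) The factor $\psi(I(O_E))$ is there for a different reason: for $P\in S_E$ the element $\psi(I(O_E))\mathcal{L}(\phi/O_E;1;z)b$ lands in $PR_P[[z]]=\psi(P)R_P[[z]]$, which is sent into itself by $\exp_{\widetilde{\phi}}$, so it is in $U(\widetilde{\phi}/R_P[[z]])$ for trivial reasons; without the $\psi(P)$ factor one has no control there.

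Third, the polynomial truncation. You frame it as a separate global argument (small at $\infty$ plus integral at all finite primes implies zero). The paper does use exactly that kind of argument, but only later, in the proof of the log-algebraicity theorem (Theorem \ref{TheoremS6.4}), where it is genuinely needed and requires uniform $\infty$-adic estimates over all embeddings. For Theorem \ref{TheoremS6.2} itself, the truncation is immediate: once one has integrality in $O_E[[z]]$ and convergence in $\mathbb{T}_z(E_\infty)$, one uses $O_E[[z]]\cap\mathbb{T}_z(E_\infty)=O_E[z]$ (discreteness of $O_E$ in $E_\infty$). Your invocation of Lemma \ref{LemmaS6.4} (which is about $\mathcal{L}(\phi/O_E;-n;z)$ for $n\geq 0$, i.e.\ negative arguments) in this context is a red herring; that lemma plays no role here.

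Finally, the equality for $S_E=\emptyset$. Your proposed route via $\det_{\mathbb{T}_z(K_\infty)}\mathcal{L}(\phi/O_E;1;z)=Z_{O_E}(1;z)$ and a Fitting-ideal index comparison is not carried out, and it risks circularity: Theorem \ref{TheoremS6.2} is an ingredient in the paper's proof of the class formula (Theorem \ref{TheoremS6.3}), so one cannot freely invoke a $z$-deformed class formula to close the argument. The paper avoids this entirely by the exact local computation described above.
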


\begin{proof}
We divide the proof into several steps.

\medskip

\noindent 1) We will first work in $E[[z]].$ Observe that $\exp_{\widetilde{\phi}}:E[[z]]\rightarrow \widetilde{\phi}(E[[z]])$ is an isomorphism of $A$-modules. In fact, if we write: $\log_\phi=\sum_{i\geq 0} l_i(\phi) \tau ^i$, then we set:
 $$\log_{\widetilde{\phi}}=\sum_{i\geq 0} l_i(\phi) z ^i \tau^i.$$
Thus, $\log_{\widetilde{\phi}}$ converges on $E[[z]],$ and $\log_{\widetilde{\phi}}\exp_{\widetilde{\phi}}=\exp_{\widetilde{\phi}}\log_{\widetilde{\phi}}=1.$

\medskip

\noindent 2) Let $P$ be a maximal ideal of $A.$ Let $R_P=S^{-1}O_E\subset E,$ where $S=A\setminus P.$ Then:
$$PR_P=\psi(P)R_P.$$
By Lemma \ref{LemmaS6.5}, we have:
$$\exp_{\widetilde{\phi}}(PR_P[[z]])\subset PR_P[[z]],$$
$$\log_{\widetilde{\phi}}(PR_P[[z]])\subset PR_P[[z]].$$
Thus:
\begin{equation} \label{equation1}
\exp_{\widetilde{\phi}}(PR_P[[z]])=PR_P[[z]].
\end{equation}

\medskip

\noindent 3) Recall that there exists a sign-normalized rank one Drinfeld module $\varphi:=P*\phi:A\hookrightarrow B\{\tau\}$ such that:
$$\forall a\in A, \quad \phi_P\phi_a=\varphi_a\phi_P.$$
Furthermore (\cite{GOS}, Theorem 7.4.8):
$$\forall a\in A, \quad \varphi_a= \sigma_P (\phi_a):=\sum_{i=0}^{r\deg a}\sigma_P(\phi_{a,i})\tau^i.$$
Thus:
$$\exp_{\varphi}=\sigma_P(\exp_\phi):= \sum_{i\geq 0} \sigma_P(e_i(\phi))\tau ^i,$$
$$\log_{\varphi}=\sigma_P(\log_\phi):= \sum_{i\geq 0} \sigma_P(l_i(\phi))\tau ^i.$$
In particular:
$$\phi_P\exp_\phi = \sigma_P(\exp_\phi)\psi(P),$$
$$\psi(P)\log_\phi=\sigma_P(\log_\phi)\phi_P.$$
The same properties hold for $\widetilde{\phi}.$

\medskip

\noindent 4)  Let's set:
$$U(\widetilde{\phi}/R_P[[z]])=\{ x\in E[[z]]; \exp_{\widetilde{\phi}}(x)\in R_P[[z]]\}.$$
Let's assume that $P\not \in S_E.$ Then, by 1) and 2),  $\exp_{\widetilde{\phi}}$ induces an isomorphism of $A$-modules:
$$\frac{E[[z]]}{PR_P[[z]]}\simeq \widetilde{\phi}(\frac{E[[z]]}{PR_P[[z]]}).$$
Therefore, we get an isomorphism of $A$-modules:
$$\frac{U(\widetilde{\phi}/R_P[[z]])}{PR_P[[z]]}\simeq \widetilde{\phi}(\frac{R_P[[z]]}{PR_P[[z]]}).$$
Now observe that:
$$(\widetilde{\phi}_P-z^{\deg P} \sigma _{P,O_E})\widetilde{\phi}(\frac{R_{P}[[z]]}{PR_{P}[[z]]})=\{0\}.$$
Furthermore, if $x\in E[[z]]\setminus R_{P}[[z]],$ then one can easily verify that:
$$(\widetilde{\phi}_P-z^{\deg P} \sigma _{P,O_E})(x)\not \in PR_{P}[[z]].$$
Thus:
$$\widetilde{\phi}(\frac{R_P[[z]]}{PR_P[[z]]})=\{ x\in \widetilde{\phi}(\frac{E[[z]]}{PR_P[[z]]}),(\widetilde{\phi}_P-z^{\deg P} \sigma _{P,O_E})(x)=0\}.$$
Let $x\in E[[z]],$ we deduce that:
$$x\in U(\widetilde{\phi}/R_P[[z]])\Leftrightarrow (\widetilde{\phi}_P-z^{\deg P} \sigma _{P,O_E})(\exp_{\widetilde{\phi}}(x))\in PR_P[[z]].$$
Observe that, by 3), we have:
$$\sigma_{P,O_E}(\exp_{\widetilde{\phi}})=\exp_{\widetilde{\varphi}},$$
and also:
$$\widetilde{\phi}_P\exp_{\widetilde{\phi}}=\sigma_{P,O_E}(\exp_{\widetilde{\phi}}) \psi(P).$$

Thus:
$$x\in U(\widetilde{\phi}/R_{P}[[z]])\Leftrightarrow \exp_{\widetilde{\varphi}}(\psi(P)x-z^{\deg P} \sigma_{P,O_E}(x)) \in PR_{P}[[z]].$$
Applying \eqref{equation1} for $\varphi,$ we have:
$$x\in U(\widetilde{\phi}/R_{P}[[z]])\Leftrightarrow \psi(P)x-z^{\deg P} \sigma_{P,O_E}(x)\in PR_{P}[[z]].$$
Thus:
$$U(\widetilde{\phi}/R_{P}[[z]])= (1-\frac{z^{\deg P}}{\psi(P)}\sigma_{P, O_E})^{-1} R_P[[z]].$$ ${}$

\medskip

\noindent 5) Let $P$ be a maximal ideal of $A.$ If $P\not \in S_E,$ by 4), we have:
 $$U(\widetilde{\phi}/R_P[[z]])=\psi(I(O_E))\mathcal L(\phi/O_E; 1; z)R_P[[z]]= (1-\frac{z^{\deg P}}{\psi(P)} \sigma_{P,O_E})^{-1}R_{P}[[z]].$$
If $P\in S_E,$  then:
 $$\psi(I(O_E))\mathcal L(\phi/O_E; 1; z)R_{P}[[z]]=PR_{P}[[z]]\subset U(\widetilde{\phi}/R_P[[z]]) .$$
Since $\psi(I(O_E))\mathcal L(\phi/O_E;1;z)\in \mathbb  T_z(H_\infty)[G],$ we get:
 $$\psi(I(O_E))\mathcal L(\phi/O_E;1;z)R[z]\subset \mathbb T_z(E_\infty).$$
Observe that $O_E[[z]]=\bigcap_PR_{P}[[z]].$ Therefore, we get:
 $$\exp_{\widetilde{\phi}}(\psi(I(O_E))\mathcal L(\phi/O_E;1;z)O_E[z]) \subset O_E[[z]] \cap \mathbb T_z(E_\infty)=O_E[z].$$
Thus, we get the first assertion.

\bigskip

Now, let's assume that $S_E=\emptyset.$ We have:
$$\bigcap_PU(\widetilde{\phi}/R_{P}[[z]])= \{ x\in E_\infty[[z]], \exp_{\widetilde{\phi}}(x)\in O_E[[z]]\}.$$
By 4), we get:
$$\prod_P(1-\frac{z^{\deg P}}{\psi (P)}\sigma _{P,O_E})\{ x\in E_\infty[[z]], \exp_{\widetilde{\phi}}(x)\in O_E[[z]]\}=O_E[[z]].$$
Thus:
$$\{ x\in E_\infty[[z]], \exp_{\widetilde{\phi}}(x)\in O_E[[z]]\}=\mathcal L(\phi/O_E; 1; z)O_E[[z]].$$
Hence:
$$U(\widetilde{\phi}/R[z])=\mathcal L(\phi/O_E; 1; z)O_E[[z]]\cap \mathbb T_z(E_\infty).$$
Since $ \mathcal L(\phi/O_E; 1; z)\in(\mathbb  T_z(H_\infty)[G])^\times,$ we have:
 $$\mathcal L(\phi/O_E; 1; z)O_E[[z]]\cap \mathbb T_z(E_\infty)= \mathcal L(\phi/O_E; 1; z)O_E[z].$$
This concludes the proof of the theorem.
\end{proof}


\subsection{A class formula \`a la Taelman} \label{class formula}${}$

\medskip

Recall that $\ev:\mathbb T_z(E_\infty)\rightarrow E_\infty$ is the evaluation at $z=1.$

\begin{definition} \label{equivariant harmonic series}
We define {\it the equivariant $A$-harmonic series} $\mathcal L(\phi/O_E)$ attached to $\phi/O_E$ by:
 $$\mathcal L(\phi/O_E)= \ev(\mathcal L(\phi/O_E; 1; z))\in (H_\infty[G])^\times.$$
\end{definition}

Note that:
$$\mathcal L(\phi/O_E)=\prod_{ P}(1-\frac{1}{\psi(P)}\sigma_{P,O_E})^{-1}=\sum_{I\in \mathcal I(A), I\subset A}\frac{1}{\psi(I)}\sigma_{I,O_E}.$$

\begin{theorem}\label{TheoremS6.3} We have:
$$\alpha_A(\phi/O_E)=1,$$
i.e.
$$\zeta_{O_E}(1)= [O_E:U(\phi/O_E)]_A[H(\phi/O_E)]_A.$$
Furthermore:
$$\psi(I(O_E))\mathcal L(\phi/O_E)O_E\subset U_{St}(\phi/O_E),$$
and
$$[\frac{U_{St}(\phi/O_E)}{\psi(I(O_E))\mathcal L(\phi/O_E)O_E}]_A=[\phi(\frac{O_E}{I(O_E)O_E})]_A.$$
\end{theorem}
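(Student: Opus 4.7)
My plan is to compute the index $[O_E:\psi(I(O_E))\mathcal L(\phi/O_E)O_E]_A$ in two different ways and compare them. The inclusion $\psi(I(O_E))\mathcal L(\phi/O_E)O_E\subset U_{St}(\phi/O_E)$ is immediate upon applying $\ev$ to the inclusion of Theorem \ref{TheoremS6.2}. Combined with Theorem \ref{TheoremS4.1} and the chain $\psi(I(O_E))\mathcal L(\phi/O_E)O_E\subset U_{St}(\phi/O_E)\subset U(\phi/O_E)$, the index factors as
\[
[O_E:\psi(I(O_E))\mathcal L(\phi/O_E)O_E]_A=[O_E:U(\phi/O_E)]_A\,[H(\phi/O_E)]_A\,[U_{St}(\phi/O_E):\psi(I(O_E))\mathcal L(\phi/O_E)O_E]_A.
\]

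For the first computation, by Lemma \ref{LemmaS5.2} the left-hand side equals the sign-normalized determinant of $\alpha:=\psi(I(O_E))\mathcal L(\phi/O_E)$ acting $K_\infty$-linearly on $E_\infty$. Factoring $\alpha$ as multiplication by the scalar $\psi(I(O_E))\in H$ composed with $\mathcal L(\phi/O_E)$ gives $\det_{K_\infty}\alpha=N_{E/K}(\psi(I(O_E)))\cdot\det_{K_\infty}\mathcal L(\phi/O_E)$; using Lemma \ref{LemmaS6.2}(2), the first factor sign-normalizes to $[O_E/I(O_E)O_E]_A$. For the second factor I would generalize the calculation of Theorem \ref{TheoremS6.1}: at each $P\notin S_E$, since $\mid I_P\mid$ is coprime to $p$, the idempotent $\mid I_P\mid^{-1}\sum_{i\in I_P}i$ decomposes $E_\infty$ as the inertia-invariant part $E^{I_P}_\infty$ plus a complement on which $\sigma_{P,O_E}$ vanishes, reducing the local factor to the unramified computation and producing $\prod_{\mathfrak P\mid P}(1-[O_E/\mathfrak P]_A^{-1})^{-1}$. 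The ``missing'' Euler factors at $P\in S_E$ are reconstituted via the $\phi$-stable filtration on $O_E/\mathfrak P^{e_\mathfrak P}$: on the graded pieces $\mathfrak P^i/\mathfrak P^{i+1}$ for $i\geq 1$ the $A$-action through $\phi$ reduces to scalar multiplication, while the $i=0$ piece satisfies $[\phi(O_E/\mathfrak P)]_A=[O_E/\mathfrak P]_A-1$ from the proof of Proposition \ref{PropositionS6.2}. Reassembling yields the missing factor $[O_E/PO_E]_A/[\phi(O_E/PO_E)]_A$ at each $P\in S_E$, and putting everything together:
\[
[O_E:\psi(I(O_E))\mathcal L(\phi/O_E)O_E]_A=\zeta_{O_E}(1)\cdot[\phi(O_E/I(O_E)O_E)]_A.
\]

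For the second computation, that of $[U_{St}(\phi/O_E):\psi(I(O_E))\mathcal L(\phi/O_E)O_E]_A$, I would localize the inclusion of Theorem \ref{TheoremS6.2} at each maximal ideal $P$ and invoke Step 4 of its proof. For $P\notin S_E$ that step gives equality, so no contribution. For $P\in S_E$, the $P$-factor in $\mathcal L(\phi/O_E;1;z)$ is trivial (since $\sigma_{P,O_E}=0$) while the remaining Euler factors are $P$-adic units, so the local inclusion reduces to $PR_P[[z]]\subset U(\widetilde\phi/R_P[[z]])$ with local quotient $\widetilde\phi(O_E/PO_E)[[z]]$ by Step 4. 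A careful analysis on the level of $\mathbb T_z(E_\infty)$, analogous to that of Theorem \ref{TheoremS4.1}, then identifies the Fitting ideal of $U_{St}(\phi/O_E)/\psi(I(O_E))\mathcal L(\phi/O_E)O_E$ with $[\phi(O_E/I(O_E)O_E)]_A$, which gives the third assertion.

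Equating the two expressions and cancelling $[\phi(O_E/I(O_E)O_E)]_A$ yields $\zeta_{O_E}(1)=[O_E:U(\phi/O_E)]_A\,[H(\phi/O_E)]_A$, which by Proposition \ref{PropositionS6.2} and the definition of $\alpha_A$ is exactly $\alpha_A(\phi/O_E)=1$. The hardest part will be the local analysis at ramified primes in the second computation, in particular identifying the $A$-module structure (via $\phi$) of the quotient after evaluation at $z=1$; the generalization of Theorem \ref{TheoremS6.1} needed for the first computation should then be a routine adaptation using the idempotent decomposition over the tame inertia.
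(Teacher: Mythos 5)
Your plan is correct in outline, and the first computation is a genuine alternative to the paper's argument. The paper proves the class formula by establishing, for \emph{every} finite product $J\subset I(O_E)$ of primes, the identity $[\,U_{St}(\phi/O_E):\psi(J)\mathcal L_J(\phi/O_E)O_E\,]_A=[\phi(O_E/JO_E)]_A$ via a four-term exact sequence and a Snake Lemma comparison of class modules, and then letting $J=J_N$ exhaust all maximal ideals: in the limit $\mathcal L_{J_N}(\phi/O_E)\to 1$ (so no determinant of the $L$-operator ever needs to be evaluated), while $L_{J_N}\to L_A(\phi/O_E)$. You instead fix $J=I(O_E)$ and compensate by computing $\det_{K_\infty}\mathcal L(\phi/O_E)$ explicitly, which requires extending Theorem \ref{TheoremS6.1} to tamely ramified primes by splitting off the tame-inertia idempotent, and reconstituting the missing Euler factors at $P\in S_E$ via the $\phi$-stable filtration $\{\mathfrak P^i/\mathfrak P^{i+1}\}_{i\geq 0}$ on $O_E/\mathfrak P^{e_{\mathfrak P}}$ (indeed for $i\geq 1$ the $q$-power terms of $\phi_a$ push $\mathfrak P^i$ into $\mathfrak P^{i+1}$, so the $A$-action is scalar, and the $i=0$ piece gives $[O_E/\mathfrak P]_A-1$ exactly as in Proposition \ref{PropositionS6.2}). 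This is arguably more illuminating than the limit argument, as it identifies where each Euler factor comes from.

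The one place where you are thin is the ``second computation,'' i.e. the third assertion $[\,U_{St}(\phi/O_E):\psi(I(O_E))\mathcal L(\phi/O_E)O_E\,]_A=[\phi(O_E/I(O_E)O_E)]_A$. You describe it as ``a careful analysis analogous to Theorem \ref{TheoremS4.1}'', but this is the crux of the whole argument (for you and for the paper alike), and the localization picture alone does not deliver it: passing from the local statements in $R_P[[z]]$ to the Fitting ideal of the evaluated quotient requires controlling the $(z-1)$-torsion globally. Concretely, the paper sets up the four-term exact sequence
\[
0\to\frac{U(\widetilde\phi/O_E[z])}{\psi(J)\mathcal L_J(\phi/O_E;1;z)O_E[z]}\to\widetilde\phi\Bigl(\frac{O_E[z]}{\psi(J)O_E[z]}\Bigr)\to\frac{\mathbb T_z(E_\infty)}{\psi(J)O_E[z]+\exp_{\widetilde\phi}(\mathbb T_z(E_\infty))}\to H(\widetilde\phi/O_E[z])\to 0,
\]
introduces the auxiliary Drinfeld module $\rho$ with $\exp_\rho=\psi(J)^{-1}\exp_\phi\,\psi(J)$ to identify the third term with $H(\widetilde\rho/O_E[z])$, applies the Snake Lemma to the $(z-1)$-multiplication sequences, and then cancels $[H(\widetilde\rho/O_E[z])[z-1]]_A=[H(\rho/O_E)]_A$ against $[H(\widetilde\phi/O_E[z])[z-1]]_A=[H(\phi/O_E)]_A$ on the two ends. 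This comparison between the class modules of $\phi$ and of the twisted module $\rho$ is the essential new idea beyond Theorem \ref{TheoremS4.1}, and it does not appear in your sketch; you should supply it rather than treat it as a routine adaptation.
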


\begin{proof}${}$\par
 \noindent 1)  Let $J\subset I(O_E)$ be a finite product of maximal ideals of $A.$ Set:
$$\mathcal L_J(\phi/O_E):=\prod_{P} (1-\frac{1}{\psi(P)}\sigma_{P,O_E})^{-1}\in (H_\infty[G])^\times,$$
$$\mathcal L_J(\phi/O_E; 1; z):= \prod_{P} (1-\frac{z^{\deg P}}{\psi(P)}\sigma_{P,O_E})^{-1}\in (\mathbb T_z(H_\infty)[G])^\times,$$
where $P$ runs through the maximal ideals of $A$ that do not divide $J.$

By Lemma \ref{LemmaS6.5} and the proof of  Theorem \ref{TheoremS6.2}, we have:
 $$\{ x\in E[[z]], \exp_{\widetilde{\phi}}(x)\in \psi(J)O_E[[z]]\}= \psi(J)\mathcal L_J(\phi/O_E; 1; z)O_E[[z]].$$
We can conclude as in the proof of Theorem \ref{TheoremS6.2} that:
 $$\psi(J)\mathcal L_J(\phi/O_E; 1;z)O_E[z]=\{ x\in \mathbb T_z(E_\infty), \exp_{\widetilde{\phi}}(x)\in \psi(J)O_E[z]\}.$$
Therefore, we have a short  exact sequence of $A$-modules:
\begin{align} \label{suite exacte 1}
0 \rightarrow & \frac{U(\widetilde{\phi}/O_E[z])}{\psi(J)\mathcal L_J(\phi/O_E; 1; z)O_E[z]}\rightarrow  \widetilde{\phi}(\frac{O_E[z]}{\psi(J)O_E[z]}) \rightarrow\\
& \rightarrow \frac{\mathbb T_z(E_\infty)}{\psi(J)O_E[z]+\exp_{\widetilde{\phi}}(\mathbb T_z(E_\infty))}\rightarrow H(\widetilde{\phi}/O_E[z])\rightarrow 0. \notag
\end{align}
 Note that $ \widetilde{\phi}(\frac{O_E[z]}{\psi(J)O_E[z]})$ is a finitely generated and free $\mathbb F_q[z]$-module. Let $\rho$ be the Drinfeld module defined over $O_E$ such that:
$$\exp_\rho= \psi(J)^{-1}\exp_\phi \psi(J).$$
Then, the map $x\mapsto \psi(J)^{-1}x$ induces   an isomorphism of $A$-modules (the left module is an $A$-module via $\phi$ and the right module is an $A$-module via $\rho$):
$$\frac{\mathbb T_z(E_\infty)}{\psi(J)O_E[z]+\exp_{\widetilde{\phi}}(\mathbb T_z(E_\infty))}\simeq H(\widetilde{\rho}/O_E[z]).$$
Observe that ${\ker} \, \ev = (z-1)\mathbb T_z(E_\infty).$ Furthermore, since $O_E[z]\cap(z-1)\mathbb T_z(E_\infty)=(z-1)O_E[z],$ we have :
$$U(\widetilde{\phi}/O_E[z])\cap {\ker} \, \ev =(z-1)U(\widetilde{\phi}/O_E[z]),$$
$$\psi(J)\mathcal L_J(\phi/O_E; 1; z)O_E[z]\cap {\ker}\, \ev= (z-1)\psi(J)\mathcal L_J(\phi/O_E; 1; z)O_E[z].$$
Thus, the evaluation at $z=1$ induces the following exact sequence of $A$-modules:
\begin{equation} \label{suite exacte 2}
0\rightarrow (z-1)\frac{U(\widetilde{\phi}/O_E[z])}{\psi(J)\mathcal L_J(\phi/O_E;1;z)O_E[z]} \rightarrow \frac{U(\widetilde{\phi}/O_E[z])}{\psi(J)\mathcal L_J(\phi/O_E;1;z)O_E[z]} \rightarrow\frac{U_{St}(\phi/O_E)}{\psi(J)\mathcal L_J(\phi/O_E)O_E} \rightarrow 0.
\end{equation}

Note also that the evaluation at $z=1$ induces a sequence of $A$-modules:
\begin{equation} \label{suite exacte 3}
0\rightarrow (z-1)\widetilde{\phi}(\frac{O_E[z]}{\psi(J)O_E[z]})\rightarrow \widetilde{\phi}(\frac{O_E[z]}{\psi(J)O_E[z]})\rightarrow \phi(\frac{O_E}{\psi(J)O_E}) \rightarrow  0.
\end{equation}
For an $\mathbb F_q[z]$-module $M$, we denote by $M[z-1]$ the $(z-1)$-torsion.
By \eqref{suite exacte 1}, \eqref{suite exacte 2}, \eqref{suite exacte 3} and  the Snake Lemma, we get the following exact sequence of finite $A$-modules:
\begin{align*}
0 \rightarrow  H(\widetilde{\rho}/O_E[z])[z-1] & \rightarrow H(\widetilde{\phi}/O_E[z])[z-1]\rightarrow \frac{U_{St}(\phi/O_E)}{\psi(J)\mathcal L_J(\phi/O_E)O_E} \rightarrow \\
& \rightarrow  \phi(\frac{O_E}{\psi(J)O_E}) \rightarrow H(\rho/O_E)\rightarrow H(\phi/O_E)\rightarrow 0.
\end{align*}

By the proof of Theorem \ref{TheoremS4.1}, we have:
$$[H(\widetilde{\rho}/O_E[z])[z-1]]_A= [H(\rho/O_E)]_A,$$
$$[H(\widetilde{\phi}/O_E[z])[z-1]]_A=[H(\phi/O_E)]_A.$$
Thus:
$$[\frac{U_{St}(\phi/O_E)}{\psi(J)\mathcal L_J(\phi/O_E)O_E}]_A=[\phi ( \frac{O_E}{JO_E})]_A.$$

\medskip

\noindent 2) Now, we have:
$$[O_E: \mathcal L_J(\phi/O_E)O_E]_A=\frac{{\det}_{K_\infty}\mathcal L_J(\phi/O_E)}{\sgn({\det}_{K_\infty}\mathcal L_J(\phi/O_E))}.$$
Thus:
$$[O_E: \psi(J)\mathcal L_J(\phi/O_E)O_E]_A=[\frac{O_E}{JO_E}]_A\frac{{\det}_{K_\infty}\mathcal L_J(\phi/O_E)}{\sgn({\det}_{K_\infty}\mathcal L_J(\phi/O_E))}.$$
And finally, we get:
$$[O_E:U_{St}(\phi/O_E)]_A= \frac{[\frac{O_E}{JO_E}]_A}{[\phi (\frac{O_E}{JO_E})]_A}\frac{{\det}_{K_\infty}\mathcal L_J(\phi/O_E)}{\sgn({\det}_{K_\infty}\mathcal L_J(\phi/O_E))}.$$
Set:
$$L_J=\prod_{\frak P\mid J}\frac{[\frac{O_E}{\frak P}]_A}{[\phi (\frac{O_E}{\frak P})]_A}.$$
Then:
$$[O_E:U_{St}(\phi/O_E)]_A=L_J \frac{{\det}_{K_\infty}\mathcal L_J(\phi/O_E)}{\sgn({\det}_{K_\infty}\mathcal L_J(\phi/O_E))}.$$

\medskip

\noindent 3) Let $N\geq 1,$ and we define  $J_N$ to be the l.c.m. of the product of all maximal  ideals of degree $\leq N$ and $I(O_E).$ We have:
$$\lim_{N \rightarrow +\infty } L_{J_N}=L_A(\phi/O_E),$$
$$\lim_{N \rightarrow +\infty } \mathcal L_{J_N}(\phi/O_E)=1.$$
In particular:
$$\lim_{N \rightarrow +\infty } {\det}_{K_\infty} \mathcal L_{J_N}(\phi/O_E)=1.$$
Thus:
$$[O_E:U_{St}(\phi/O_E)]_A=L_A(\phi/O_E).$$
If we apply Theorem \ref{TheoremS4.1} and Proposition \ref{PropositionS6.2}, we get:
$$\zeta_{O_E}(1)=[O_E: U(\phi/O_E)]_A[H(\phi/O_E)]_A.$$
\end{proof}



\section{Log-Algebraicity Theorem}\label{And}

\subsection{A refinement of Anderson's log-algebraicity theorem}${}$

\medskip

We keep the notation of the previous sections.

\begin{lemma}\label{LemmaS6.6}
Let $E/K$ be a finite separable extension, $H\subset E.$ Let $P$ be a maximal ideal of $A$ which is unramified in $E.$  Let $\lambda_P\in \overline{K}\setminus\{0\}$ be a root of $\phi_P.$ Then:
$$O_{E(\lambda_P)}=O_E[\lambda_P].$$
\end{lemma}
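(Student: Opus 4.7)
The plan is to establish the equality $O_{E(\lambda_P)} = O_E[\lambda_P]$ by a local analysis at each maximal ideal $\mathfrak{p}$ of $O_E$. The inclusion $O_E[\lambda_P] \subseteq O_{E(\lambda_P)}$ is immediate: viewed as a polynomial in $X$, $\phi_P(X) = \psi(P) X + \phi_{P,1} X^q + \cdots + X^{q^{\deg P}}$ lies in $B[X] \subseteq O_E[X]$ and is monic, so $\lambda_P$ is integral over $O_E$.

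For the reverse inclusion at a prime $\mathfrak{p}$ \emph{not} lying above $P$: the formal derivative satisfies $\phi_P'(X) = \psi(P)$, and since $\psi(P)B = PB$ by Lemma \ref{LemmaS6.2}, this is a unit modulo $\mathfrak{p}$. Hence $\phi_P$ has separable reduction at $\mathfrak{p}$; the minimal polynomial of $\lambda_P$ over $E$, which divides $\phi_P$ in $O_E[X]$, inherits this separability, and the Dedekind--Kummer criterion gives $(O_E[\lambda_P])_\mathfrak{p} = (O_{E(\lambda_P)})_\mathfrak{p}$, with $E(\lambda_P)/E$ unramified at $\mathfrak{p}$.

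For a prime $\mathfrak{p}$ lying above $P$: the plan is to show that $g(X) := \phi_P(X)/X$ is Eisenstein at $\mathfrak{p}$. Setting $\mathfrak{p}_B = \mathfrak{p} \cap B$, we have $v_\mathfrak{p}(\psi(P)) = 1$, since $P$ is unramified in $E/K$ (hence in $B/A$, as $H$ is unramified outside $\infty$) and $\psi(P)B = PB$. The crucial ingredient is the congruence
$$\phi_P \equiv \tau^{\deg P} \pmod{\mathfrak{p}_B},$$
which implies $\phi_{P,i} \in \mathfrak{p}$ for $0 < i < \deg P$. This is essentially the statement that, in residue characteristic $P$, the reduction of a rank-one Drinfeld $A$-module has its full $P$-torsion contained in the formal part; it is directly analogous to the formula $\phi_\theta \equiv \tau^{m\deg P} \pmod{\mathfrak{P}}$ invoked in the proof of Proposition \ref{PropositionS6.2}. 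Granting this, $g(X)$ is Eisenstein of degree $q^{\deg P} - 1$ at $\mathfrak{p}$, hence irreducible over $E$, becomes the minimal polynomial of $\lambda_P$ over $E$, and the classical theory of Eisenstein extensions shows that $\lambda_P$ is a uniformizer at the unique prime of $E(\lambda_P)$ above $\mathfrak{p}$ and that $O_{E,\mathfrak{p}}[\lambda_P]$ is the full local ring of integers.

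Combining the local equalities yields $O_{E(\lambda_P)} = O_E[\lambda_P]$. The main obstacle is the congruence $\phi_P \equiv \tau^{\deg P} \pmod{\mathfrak{p}_B}$, which must be extracted from the good reduction theory of sign-normalized rank-one Drinfeld modules; once this is in hand, both local analyses are standard applications of Eisenstein and Dedekind--Kummer.
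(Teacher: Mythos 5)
Your proof is correct and shares with the paper the central ingredient: that $g(X)=\phi_P(X)/X$ is Eisenstein at every prime of $O_E$ above $P$, which rests (in both arguments) on the unproved-but-standard congruence $\phi_P\equiv\tau^{\deg P}\pmod{\mathfrak p}$ for $\mathfrak p\mid P$, together with $\psi(P)O_E=PO_E$ and the unramifiedness of $P$ in $E$. Where you diverge is in the treatment of the primes $\mathfrak p\nmid P$: the paper cites \cite{GOS}, Proposition 7.5.18 to know a priori that $E(\lambda_P)/E$ is unramified outside $P,\infty$ and totally ramified at $P$ of degree $q^{\deg P}-1$, then computes $N_{F/E}(G'(\lambda_P))O_E = P^{q^{\deg P}-2}O_E$ via the identity $XG'(X)+G(X)=\psi(P)$ and matches it against the discriminant $\operatorname{disc}(O_F/O_E)=P^{q^{\deg P}-2}O_E$, concluding $O_F=O_E[\lambda_P]$ in one stroke. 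You instead argue purely locally: away from $P$, the constancy of $\phi_P'(X)=\psi(P)$ and its being a $\mathfrak p$-unit make the reduction of the minimal polynomial separable, so Dedekind--Kummer gives maximality of $O_E[\lambda_P]$ at $\mathfrak p$; above $P$ the Eisenstein argument gives maximality directly. The trade-off is that the paper's discriminant comparison is a compact global argument (at the price of citing the ramification structure externally), while yours is more self-contained --- unramifiedness off $P$, total ramification at $P$, and the degree $q^{\deg P}-1$ all fall out of the two local analyses rather than being imported. Both routes are sound; the remaining task in yours, as you correctly flag, is to pin down the congruence $\phi_P\equiv\tau^{\deg P}\pmod{\mathfrak p}$ from good-reduction theory, which the paper also asserts without proof.
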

\begin{proof}
Let $F=E(\lambda_P).$ Recall that $F/E$ is a finite abelian extension unramified outside $P,\infty,$ and totally ramified at $P$ (\cite{GOS}, Proposition 7.5.18).  We also have:
$$[F:E]=q^{\deg P}-1.$$
Write: $\phi_P=\sum_{k=0}^{\deg P} \phi_{P,k} \tau^k,$ $\phi_{P,k} \in B \subset O_E.$ Recall that $\phi_{P,0}=\psi(P)$ and $\phi_{P,\deg P}=1.$ Furthermore, $P$ is unramified in $E/K$ and:
$$\psi(P)O_E=PO_E.$$
Let:
$$G(X)=\sum_{k=0}^{\deg P} \phi_{P,k} X^{q^k-1}\in O_E[X].$$
Then, for any maximal ideal $\frak P$ of $O_E$ above $P:$
$$G(X)\equiv X^{q^{\deg P}-1}\pmod{\frak P}.$$
This implies that $G(X)$ is an Eisenstein polynomial at $\frak P$ for every maximal ideal of $O_E$  $\frak P$ above $P.$ Furthermore:
$$ XG'(X)+G(X)=\psi(P).$$
Therefore:
$$N_{F/E}(G'(\lambda_P))O_E= P^{q^{\deg P}-2}O_E.$$
But $P^{q^{\deg P}-2}O_E$ is the discriminant of $O_F/O_E.$ Thus $O_F=O_E[\lambda_P].$
\end{proof}

Let $E/K$ be a finite abelian extension, $H\subset E.$ Let $n\geq 0$ be an integer, let $X_1, \ldots, X_n$ be $n$ indeterminates over K. Let  $\tau: E[X_1, \ldots, X_n][[z]]\rightarrow E[X_1, \ldots, X_n][[z]]$ be the $\mathbb F_q[[z]]$-homomorphism continuous for the $z$-adic topology such that:
 $$\forall f\in E[X_1, \ldots, X_n], \quad \tau (f)=f^q.$$
For a non-zero ideal $I$ of $A$ and for $f=\sum_{i_1, \ldots, i_n\in \mathbb N}f_{i_1, \ldots, i_n} X_1^{i_1}\cdots X_n^{i_n}\in E[X_1, \ldots, X_n],$ with $f_{i_1, \ldots, i_n}\in E,$ we set:
$$I*_Ef=  \sum_{i_1, \ldots, i_n\in \mathbb N}\sigma_{I,O_E}(f_{i_1, \ldots, i_n}) \phi_I(X_1)^{i_1}\cdots \phi_I(X_n)^{i_n},$$
where $\sigma_{I, O_E}$ is defined in Section \ref{CF}. Recall that $I(O_E)$ is the product of maximal ideals of $A$ that are wildly ramified in $E/K.$

\begin{theorem}\label{TheoremS6.4}
For all $f\in O_E[X_1, \ldots, X_n],$ we have:
$$\exp_{\widetilde{\phi}}(\psi(I(O_E))\sum_{I\in \mathcal I(A), I\subset A}\frac{I*_Ef}{\psi(I)} z^{\deg I})\in O_E[X_1, \ldots, X_n, z].$$
In particular, for all $f\in B[X_1, \ldots, X_n],$ we have:
$$\exp_{\widetilde{\phi}}(\sum_{I\in \mathcal I(A), I\subset A}\frac{I*_Hf}{\psi(I)} z^{\deg I})\in B[X_1, \ldots, X_n, z].$$
\end{theorem}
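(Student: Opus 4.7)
The plan is to deduce the multivariable log-algebraicity from Theorem \ref{TheoremS6.2} via specialization at torsion points of $\phi$, combined with a density argument. By $\mathbb{F}_q$-linearity of $\exp_{\widetilde{\phi}}$ and of $f\mapsto I*_E f$ it is enough to handle $f = b\,X_1^{j_1}\cdots X_n^{j_n}$ with $b\in O_E$ and each $j_l\geq 1$; the excluded cases reduce to fewer variables, with the base case $n=0$ being exactly Theorem \ref{TheoremS6.2}. Let $F(z,\vec X)$ denote the series in question. Expanding $\exp_{\widetilde{\phi}} = \sum_{i\geq 0}e_i(\phi)z^i\tau^i$, one checks that the coefficient $G_N$ of $z^N$ in $F$ lies in $E[\vec X]$ with total degree at most $q^N\sum_l j_l$. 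Integrality $G_N\in O_E[\vec X]$ follows from the localization argument of Theorem \ref{TheoremS6.2} applied in $R_P[\vec X][[z]]$, with Lemma \ref{LemmaS6.5} providing the necessary $P$-adic bounds on $e_i(\phi)$ and $l_i(\phi)$. It therefore suffices to show that $G_N\equiv 0$ for all $N$ sufficiently large.

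To this end, fix pairwise distinct maximal ideals $P_1,\ldots,P_n$ of $A$, unramified in $E$ and coprime to $I(O_E)$, let $\lambda_l\in\phi[P_l]$ be a generator, and set $E' = E(\lambda_1,\ldots,\lambda_n)$. Lemma \ref{LemmaS6.6} yields $O_{E'} = O_E[\lambda_1,\ldots,\lambda_n]$; and since each $E(\lambda_l)/E$ is tamely ramified at $P_l$ (degree $q^{\deg P_l}-1$ coprime to $p$) and unramified elsewhere, $I(O_{E'}) = I(O_E)$. The Galois action on torsion satisfies $\phi_I(\lambda_l) = \sigma_I(\lambda_l)$ when $(I,P_l)=1$, and $\phi_I(\lambda_l) = 0$ when $P_l\mid I$. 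Because every $j_l\geq 1$, the substitution $X_l=\lambda_l$ annihilates every term in the defining sum of $F$ for which $P_l\mid I$ for some $l$; moreover the ramified Euler factor corrections at the $P_l$ themselves vanish, since averaging the tame inertial action over $\phi[P_l]$ gives $\sigma_{P_l,O_{E'}}(\lambda_l) = 0$ (assuming $\deg P_l\geq 2$, so the sum of all $P_l$-torsion points of $\phi$ vanishes). The surviving terms reassemble into $\mathcal{L}(\phi/O_{E'};1;z)\cdot b'$ for $b':= b\lambda_1^{j_1}\cdots\lambda_n^{j_n}\in O_{E'}$, and Theorem \ref{TheoremS6.2} gives
\[
F(z,\vec\lambda) = \exp_{\widetilde{\phi}}\bigl(\psi(I(O_{E'}))\,\mathcal{L}(\phi/O_{E'};1;z)\,b'\bigr)\in O_{E'}[z].
\]

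In particular $G_N(\vec\lambda) = 0$ for every $N$ exceeding the $z$-degree $D_{\vec P}$ of this polynomial. Varying $\vec P$ over infinitely many tuples of sufficiently large degree yields torsion specializations Zariski-dense in $\vec X$-space, and once the $D_{\vec P}$ are uniformly bounded by some $D_0$, one concludes $G_N\equiv 0$ for $N>D_0$. The main obstacle is precisely this uniform bound: the naive $z$-degree extracted from Theorem \ref{TheoremS6.2} depends on $[E':K]$ and grows with $\deg P_l$. The remedy is to sharpen Theorem \ref{TheoremS6.2} into a $v_\infty$-controlled statement -- bounding the $z$-degree of $\exp_{\widetilde{\phi}}(\psi(I(O_{E'}))\mathcal{L}(\phi/O_{E'};1;z)b')$ in terms of $v_\infty(b')$, which is itself bounded uniformly in $\vec P$ via the Newton polygon of $\phi_{P_l}$ -- equivalently, a Tate-algebra norm estimate in $\mathbb{T}_z(E_\infty\langle\vec X\rangle)$. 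This refinement is the technical heart of the argument.
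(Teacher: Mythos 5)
Your outline follows the same broad strategy as the paper (specialize at torsion points, use Theorem \ref{TheoremS6.2} for integrality, then prove vanishing of the $z$-coefficients $g_m$ for $m\gg 0$), and you correctly identify that the crux is a \emph{uniform} bound that does not degrade as the conductors $\deg P_l$ grow. But you leave precisely that step unproved, and your sketch of how to fill it — ``sharpen Theorem \ref{TheoremS6.2} into a $v_\infty$-controlled statement bounding the $z$-degree'' — is not how the paper closes the gap, and it is not clear such a sharpened Theorem \ref{TheoremS6.2} is the easiest route. A proof proposal that ends with ``this refinement is the technical heart of the argument'' has not proved the theorem.

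Here is what the paper actually does, and why it avoids the obstacle you ran into. The paper never extracts a $z$-degree bound from Theorem \ref{TheoremS6.2}. Instead it proves a direct \emph{archimedean estimate} at each place above $\infty$: using (i) the compactness of $\Lambda(\phi^{\sigma})K_\infty/\Lambda(\phi^{\sigma})$ for each $\sigma\in\mathrm{Gal}(H/K)$, which gives a single constant $C'$ with $v_\infty\bigl(I*^{\sigma}_E f^{\sigma}\mid_{X_i=\exp_{\phi^\sigma}(x_i)}\bigr)\geq C'$ uniformly over all $\sigma$, all ideals $I$, and all $x_i\in\Lambda(\phi^\sigma)K_\infty$; (ii) Lemma \ref{LemmaS6.5}(2), giving $v_\infty(e_j(\phi^{\sigma}))\geq C'' j q^j$; and (iii) the linear-in-$\deg I$ lower bound on $v_\infty(1/\sigma(\psi(I)))$. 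These combine to give an integer $m_0$, independent of the torsion points, with $v_\infty\bigl(g_m^{\sigma}(\lambda_1,\ldots,\lambda_n)\bigr)>0$ for all $m\geq m_0$, all $\sigma$, and all $\lambda_i$ lying in $\exp_{\phi^\sigma}(\Lambda(\phi^\sigma)K_\infty)$. Vanishing then follows not from a degree count but from the \emph{product formula}: by part (1) one has $g_m(\vec\lambda)\in O_F$, so $g_m(\vec\lambda)$ has non-negative valuation at every finite place and, for $m\geq m_0$, strictly positive valuation at every infinite place of $F$; a nonzero element of $F$ cannot have everywhere non-negative divisor of positive degree, so $g_m(\vec\lambda)=0$. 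Since this holds for all torsion tuples, $g_m\equiv 0$ for $m\geq m_0$. This is both the missing uniformity and the reason why integrality (part (1)) and the $\infty$-adic estimate (part (2)) are logically intertwined in the conclusion — a structure your proposal does not capture, since you try to get vanishing from a degree bound alone without invoking the divisor-degree argument. Until you supply the archimedean estimate (or an equivalent), the argument is incomplete.

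Two smaller remarks. First, your claim that ``integrality $G_N\in O_E[\vec X]$ follows from the localization argument of Theorem \ref{TheoremS6.2} applied in $R_P[\vec X][[z]]$'' is dubious as stated: the operators $\widetilde{\phi}_P-z^{\deg P}\sigma_{P,O_E}$ in that argument act $O_E$-linearly on scalars, while $I*_E$ substitutes the non-linear polynomials $\phi_I(X_j)$; the paper instead obtains integrality by evaluating at torsion points and using Lemma \ref{LemmaS6.6} together with a Lagrange-type interpolation over $\phi[P_l]$. Second, your hypothesis $\deg P_l\geq 2$ is slightly more restrictive than the paper's $q^{\deg P_l}\geq 3$, though this is harmless as both supply infinitely many admissible primes.
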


\begin{remark}
This result is a  generalization of the Log-Algebraicity Theorems established in \cite{AND}, \cite{AND2} (in these papers the theorem is proved for $E=H,$  $d_\infty=1$ and $n\leq 1$). Furthermore, the result in the case $E=H$ can be proved along the same lines as that used to prove \cite{AND2}, Theorem 3. Following \cite{ATR}, Section 2.6,  we will show below how Theorem \ref{TheoremS6.2} implies the Log-Algebraicity Theorem. Observe also that the case $n=0$ is a  direct consequence of Theorem \ref{TheoremS6.2}.
\end{remark}

\begin{proof}
Let's write:
$$\exp_{\widetilde{\phi}}(\psi(I(O_E))\sum_I\frac{I*_Ef}{\psi(I)} z^{\deg I})=\sum_{m\geq 0} g_m(X_1, \ldots, X_n)z^m, $$
with $g_m(X_1, \ldots, X_n)\in E[X_1, \ldots, X_n].$

\medskip

\noindent 1) Let $P_1, \ldots, P_n$ be $n$ distinct maximal ideals of $A$ which are unramified in $E,$  with $q^{\deg P_i}\geq 3, i=1, \ldots, n,$ and for $i=1, \ldots, n,$ let $\lambda_i\not =0$ be a root of $\phi_{P_i}.$ Set:
$$F=E(\lambda_1, \ldots, \lambda_n).$$
Then $F/E$ is unramified outside $P_1, \ldots, P_n, \infty,$ $F/K$ a finite abelian extension of $K$ which is tamely ramified at $P_1, \ldots, P_n.$ Let $O_F$ be the integral closure of $A$ in $F.$  Let $Q$ be any maximal ideal of $A,$ if $Q$ is not wildly ramified in $E,$ we have (\cite{GOS}, Proposition 7.5.4):
 $$\sigma_{Q,O_F}(\lambda_i)=\phi_Q(\lambda_i), \quad \text{if} \quad Q\not =P_i,$$
and:
 $$\sigma_{P_i,O_F}(\lambda_i)=0.$$
We deduce that:
 $$\psi(I(O_E))\sum_I\frac{I*_Ef}{\psi(I)} z^{\deg I}\mid_{X_i=\lambda_i}= \psi(I(O_F))\mathcal L(\phi/O_F; 1;z)f(\lambda_1, \ldots, \lambda_n).$$
Therefore, by Theorem \ref{TheoremS6.2}, we get:
 $$\forall m\geq 0, \quad g_m(\lambda_1, \ldots, \lambda_n)\in O_F.$$
Let $i\in \{1, \ldots, n\}.$ Then:
 $$E(\lambda_i)\cap E(\lambda_1, \ldots \lambda_{i-1}, \lambda_{i+1}, \ldots, \lambda_n)=E.$$
Furthermore, the discriminant of $O_{E(\lambda_i)}/O_E$ and $O_{E(\lambda_1, \ldots \lambda_{i-1}, \lambda_{i+1}, \ldots, \lambda_n)}/O_E$ are relatively prime, thus, by Lemma \ref{LemmaS6.6}, we have:
 $$O_F=O_E[\lambda_1, \ldots, \lambda_n].$$
 Finally, for $m\geq 0,$ for $n$ distinct maximal ideals $P_1, \ldots, P_n$ of $A$ that are unramified in $E/K,$ with $q^{\deg P_i}\geq 3,i=1, \ldots, n,$ and for $i=1, \ldots, n,$ if $\lambda_i\not =0$ be a root of $\phi_{P_i},$ then we have:
 $$g_m(\lambda_1, \ldots, \lambda_n)\in O_E[\lambda_1, \ldots, \lambda_n].$$
This implies:
$$\forall m\geq 0, \quad g_m(X_1, \ldots, X_n)\in O_E[X_1, \ldots, X_n].$$

\medskip

\noindent 2)  We fix a $K$-embedding of $\overline{K}$ in $\mathbb C_\infty.$
For $\sigma \in {\rm Gal}(H/K),$ let $\Lambda(\phi^{\sigma})\subset \mathbb C_\infty$ be the $A$-module of periods of $\phi^{\sigma},$ and let  $\Lambda(\phi^{\sigma})K_\infty$ be the $K_\infty$-vector space generated by $\Lambda(\phi^{\sigma}).$  Then $\frac{\Lambda(\phi^{\sigma})K_\infty}{\Lambda(\phi^{\sigma})}$ is compact, thus there exists a constant $C\in \mathbb R$ such that:
$$\forall \sigma\in {\rm Gal}(H/K), \forall x\in \Lambda(\phi^{\sigma})K_\infty, \quad v_\infty(\exp_{\phi^{\sigma}}(x))\geq C.$$
Recall that, if $\sigma \in {\rm Gal}(H/K),$  then there exists a non-zero ideal $J$ of $A$  such that $\sigma=(J,H/K)=\sigma_J,$ and we have (\cite{GOS}, Theorem 7.4.8):
$$\phi_J \phi_a= \phi^{\sigma}_a\phi_J.$$
Thus:
$$\exp_{\phi^{\sigma}}\psi(J)=\phi_J \exp_\phi.$$
In particular:
$$\Lambda(\phi^{\sigma})= \psi(J)J^{-1}\Lambda(\phi),$$
$$\Lambda(\phi^{\sigma})K_\infty= \psi(J)\Lambda(\phi)K_\infty.$$
Therefore, there exists a constant $C'\in \mathbb R,$ such that:
$$\forall \sigma \in {\rm Gal}(H/K), \forall x_1, \ldots, x_n \in \Lambda(\phi^{\sigma})K_\infty, \forall I \in \mathcal I(A), \quad v_\infty(I *^\sigma_E f^{\sigma}\mid_{X_i=\exp_{\phi^{\sigma}}(x_i)})\geq C',$$
where $*^\sigma_E$ is the map $*$ attached to $\phi^{\sigma}.$
Now, recall that $\exp_\phi =\sum_{j\geq 0} e_j(\phi)\tau^j,$ then there exists a constant $C''>0$ such that (Lemma \ref{LemmaS6.5}):
$$\forall \sigma\in {\rm Gal}(H/K),\forall j \gg 0, \quad v_\infty(e_j(\phi^{\sigma}))\geq C''jq^j.$$
Note also that there exists $C'''\in \mathbb R$ such that:
$$\forall \sigma \in {\rm Gal}(H/K), \forall I \in \mathcal I(A), \deg I=m \gg 0,  \quad v_\infty(\frac{1}{\sigma(\psi(I))})\geq \frac{m}{d_\infty}+C'''.$$
This implies that there exists an integer $m_0\in \mathbb N,$ such that:
$$\forall m\geq m_0,\forall \sigma\in {\rm Gal}(E/K),  \forall \lambda_1, \ldots, \lambda_n\in \exp_{\phi^{\sigma}}(\Lambda(\phi^{\sigma})K_\infty), \quad v_\infty(g_m^{\sigma}(\lambda_1, \ldots, \lambda_n))>0.$$

\medskip

\noindent 3) Let $m_0\in \mathbb N$ be as in 2). Let $\lambda_1, \ldots, \lambda_n$ be $n$ torsion points for $\phi.$ Let $F=E(\lambda_1, \ldots, \lambda_n).$ Then $F/K$ is a finite abelian extension.  Let $w$ be a place of $F$ above $\infty.$  Let $i_w:E\rightarrow \mathbb C_\infty$ be the $K$-embedding of $F$ in $\mathbb C_\infty$ corresponding to $w.$ Then there exists $\sigma\in {\rm Gal}(F/K)$ such that:
$$\forall m\geq 0, \quad i_w(g_m(\lambda_1, \ldots, \lambda_n))= \sigma(g_m(\lambda_1, \ldots, \lambda_n))=g_m^{\sigma}(\sigma(\lambda_1),\ldots, \sigma(\lambda_n)).$$
Observe that $\sigma(\lambda_i)\in \exp_{\phi^{\sigma}}(\Lambda(\phi^{\sigma})K_\infty), i=1, \ldots, n$ (\cite{GOS}, Proposition 7.5.16).
Therefore:
$$\forall m\geq m_0, \quad w(g_m(\lambda_1, \ldots, \lambda_n))>0.$$
Thus, we get that for any place $w$ of $F$ above $\infty:$
$$\forall m\geq m_0, \quad w(g_m(\lambda_1, \ldots, \lambda_n))>0.$$
But by 1), $\forall m\geq 0,$ $g_m(\lambda_1, \ldots, \lambda_n)\in O_F.$ Since $O_F$ is the set of elements of $F$ which are regular outside the places of $F$ above $\infty,$ we deduce that:
$$\forall m\geq m_0, \quad g_m(\lambda_1, \ldots, \lambda_n)=0.$$
And the above property is true for any $n$ torsion points of $\phi,$ thus:
$$\forall m\geq m_0, \quad g_m(X_1, \ldots, X_n)=0.$$
\end{proof}

M. Papanikolas informed us that, together with N. Green, they obtained  explicit formulas for Anderson's Log-Algebraicity Theorem (\cite{AND}, Theorem 5.1.1) when the genus $g$ of $K$ is one and $d_\infty=1$.


\subsection{Several variable $L$-series and shtukas}\label{Section one variable}${}$

\medskip

In this section, we present an alternative approach to the several variable Log-Algebraicity Theorem (Theorem \ref{TheoremS6.4}) by using the seminal works of Drinfeld \cite{DRI1}, \cite{DRI2}, \cite{DRI3} (see also \cite{AND}, \cite{THA2}, and \cite{GOS}, Chapter 6).

We recall some notation for the convenience of the reader. Let $X/\mathbb F_q$ be a smooth projective geometrically irreducible curve of genus $g$ whose function field is $K$. We will consider $\infty$ as a closed point of $X$ of degree $d_\infty.$ Recall that $K_\infty$ is the completion of $K$ at $\infty,$ $\bar K_\infty$ is a fixed algebraic closure of $K_\infty,$ and  $\mathbb C_\infty$ is the completion of $\bar K_\infty.$  Let $\sgn: K_\infty^\times \rightarrow \mathbb F_\infty^\times$ be a sign function ($\mathbb F_\infty$ is the residue field of $K_\infty$ and $d_\infty=[\mathbb F_\infty:\mathbb F_q]$), i.e. $\sgn$ is a group homomorphism such that $\sgn\mid_{\mathbb F_\infty^\times}={\rm Id}\mid_{\mathbb F_\infty^\times}.$ We fix  $\pi \in K\cap {\rm Ker}(\sgn)$ and  such that $K_\infty =\mathbb F_\infty((\pi)).$

We set $\bar X=X \otimes_{\mathbb F_q} \mathbb C_\infty,$ and $\bar A:=A\otimes_{\mathbb F_q}\mathbb C_\infty.$ Then $F:={\rm Frac}(\bar A)$ is the function field of $\bar X.$ We identify $\mathbb C_\infty$ with its image $1\otimes \mathbb C_\infty $ in $F.$ Note that $\bar A$ is the set of elements of $F/\mathbb C_\infty$ which are ``regular outside $\infty$''. We denote by $\tau: F\rightarrow  F$ the $K$-algebra homomorphism such that:
$$\tau\mid_{\bar A}={\rm Id}_A \otimes {\rm Frob}_{\mathbb C_\infty}.$$
For $m\geq 0,$ we also set:
$$\forall x\in F, \quad x^{(m)}=\tau^m (x).$$
Let $P$ be a point of $\bar X (\mathbb C_\infty).$ We denote by $P^{(i)}$ the point of $\bar X(\mathbb C_\infty)$ obtained by applying $\tau^i$ to  the coordinates of $P.$  If $D\in {\rm Div}(\bar X),$ $D=\sum_{j=1}^n   n_{P_j} (P_j),$ $P_j\in \bar X(\mathbb C_\infty),$  $n_{P_j}\in \mathbb Z ,$ we set:
$$D^{(i)}=\sum_{j=1}^n  n_{P_j} (P_j^{(i)}).$$
If $D=(x)$, $x\in F^\times,$ then:
$$D^{(i)}= (x^{(i)}).$$

We fix a point $\bar \infty$ of $X(\mathbb C_\infty)$ above $\infty.$ Let $\xi$ be the point of $\bar X (\mathbb C_\infty)$ corresponding to the kernel of the map $\bar A\rightarrow \mathbb C_\infty, \sum x_i\otimes a_i\mapsto \sum x_ia_i.$ Let $\rho: K \rightarrow K \otimes 1, x \mapsto x\otimes 1.$ Then:
 $$F=\mathbb C_\infty (\rho(K)).$$
By \cite{THA2} (see also \cite{GOS}, section 7.11), there exists a function $f\in F^\times,$ such that:
$$V^{(1)}-V+(\xi)-(\bar \infty)=(f),$$
for some effective divisor $V$ of $\bar X/\mathbb C_\infty$ of degree $g.$  The points $\xi$ and $\bar \infty^{(-1)}$  do not belong to the support of $V$ (\cite{THA2}, Corollary 0.3.3). We identify the completion of $F$ at $\bar \infty$  with:
 $$\mathbb C_\infty((\frac{1}{t})),$$
where $t=\rho(\pi^{-1}).$ We have a natural sign function  $\overline{\sgn}: \mathbb C_\infty((\frac{1}{t}))^\times \rightarrow \mathbb C_\infty^\times$ attached to $\frac{1}{t}.$ We normalize $f$ such that $\overline{\sgn}(f)=1.$

We set:
 $$(\infty)=\sum_{i=0} ^{d_\infty-1} (\bar \infty^{(i)}),$$
 $$W(\mathbb C_\infty)=\bigcup_{m\geq 0}L(V+m( \infty)),$$
where:
 $$L(V+m( \infty))=\{x\in  F^\times, (x)+V+m( \infty)\geq 0\}\cup \{0\}.$$
Observe that for $i>0:$
\begin{equation} \label{divisor}
(ff^{(1)}\cdots f^{(i-1)})= V^{(i)}-V+(\xi)+\cdots +(\xi^{(i-1)})-\sum_{k=0}^{i-1} (\bar \infty^{(k)}).
\end{equation}
We have (see for example \cite{THA2}, paragraph 0.3.5):
$$W(\mathbb C_\infty)=\oplus_{i\geq 0} \mathbb C_\infty f\cdots f^{(i-1)}.$$
If $L$ is a sub-$\mathbb F_q$-algebra of $\mathbb C_\infty,$ we set:
$$W(L)=\oplus_{i\geq 0} L f\cdots f^{(i-1)}.$$

Let  $a\in A,$ then we can write:
$$\rho (a)=a \otimes 1=\sum_{i=0}^{\deg a} \phi_{a,i} f\cdots f^{(i-1)},$$
where $\phi_{a,i} \in \mathbb C_\infty,$ and:
$$\phi_{a,\deg a}=\overline{\sgn} (a),$$
$$\phi_{a,0}=a.$$
In particular, note  that $\bar \infty$ does not belong to the support of $V.$ The map $\phi:A\rightarrow \mathbb  C_\infty\{\tau\}$ such that:
$$\forall a\in A, \quad \phi_a=\sum \phi_{a,i} \tau^i,$$
is a sign-normalized rank one Drinfeld module by the Drinfeld correspondence attached to $f$ (\cite{THA2}, paragraph 0.3.5, see also  \cite{GOS}, section 7.11 ). Let's write:
 $$\exp_\phi=\sum e_i(\phi) \tau ^i, \quad e_i(\phi) \in \mathbb C_\infty.$$
\noindent We have  (\cite{THA2}, Proposition 0.3.6):
  $$\forall i\geq 0, \quad e_i(\phi)=\frac{1}{f\cdots f^{(i-1)}\mid_{\xi^{(i)}}}.$$

Let $\mathbb H= {\rm Frac}(A \otimes B) \subset F.$ By Drinfeld's correspondence (see \cite{GOS}, Chapter 6), $f\in \mathbb H.$ Thus:
 $$f=t+\sum_{i\geq 0} f_i \frac{1}{t^i}\in H((\frac{1}{t}))\subset \mathbb C_\infty((\frac{1}{t})),$$
where $f_i\in H, \forall i\geq 0.$

We view $\mathbb H$ as a function field over $\rho(K)=K \otimes 1.$ Let $\mathbb K={\rm Frac}(A \otimes A).$  Let $\infty$ be the unique place of $\mathbb K/\rho (K)$ which is above the place $\infty$ of $K/\mathbb F_q.$ Then the completion of $\mathbb K$ above $\infty$ is:
 $$\mathbb K_\infty= \rho(K)(\mathbb F_\infty)((1 \otimes \pi)).$$
Observe that the set of elements of $\mathbb K/\rho(K)$ which are regular outside $\infty$ is:
 $$\mathbb A:= A[\rho(K)]=K \otimes A.$$
We set $\mathbb B:=B[\rho(K)]=K \otimes B,$ then $\mathbb B$ is the integral closure of $\mathbb A$ in $\mathbb H.$ Let $G={\rm Gal}(H/K)\simeq {\rm Gal}(\mathbb H/\mathbb K).$   Let $\varphi:  \mathbb A \rightarrow \mathbb H\{\tau\}$ be the $\rho(K)$-algebra homomorphism such that:
 $$\forall a\in A, \quad \varphi _a=\sum_{i=0}^{\deg a} \phi_{a,i} f\cdots f^{(i-1)}\tau^i \in \mathbb H\{\tau \}.$$
Let $\exp_\varphi \in \mathbb H\{\{\tau\}\}$ be the following element:
 $$\exp_\varphi =\sum_{i\geq 0} f\cdots f^{(i-1)} e_i(\phi) \tau^i=\sum_{i\geq 0} \frac{f\cdots f^{(i-1)}}{f\cdots f^{(i-1)}\mid_{\xi^{(i)}}}\tau^i.$$
Then:
 $$\forall a\in \mathbb A, \quad \exp_\varphi  a=\varphi_a\exp_\varphi.$$
Let $\mathbb H_\infty=\mathbb H\otimes_{\mathbb K}\mathbb K_\infty,$ then $\exp_\varphi$ converges on $\mathbb H_\infty.$

Let $\frak P$ be a maximal ideal of $B.$ Then $\frak P\mathbb B$ is a maximal ideal of $\mathbb B.$ Let $v_{\frak P}: \mathbb H\rightarrow \mathbb Z\cup\{+\infty\}$ be the valuation on $\mathbb H$ attached to $\frak P\mathbb B.$ Since for all $a\in A,$ $\rho(a)=\sum_{j=0}^{\deg a} \phi_{a,j} f\cdots f^{(j-1)},$ we deduce  that:
 $$\forall i\geq 0, \quad v_{\frak P}(f^{(i)})=q^i v_{\frak P}(f)=0.$$
However, we warn the reader that, if $g>0,$ we have:
 $$f\not \in \mathbb B.$$
 We set:
 $$W(B)= \oplus_{i\geq 0} Bf\cdots f^{(i-1)}.$$

\begin{lemma}\label{LemmaS7.1}${}$\par
\noindent 1)  $W(B)$ is a $A \otimes B$-module containing $A \otimes B,$ furthermore $W(B)$ is a $A \otimes A$-module via $\varphi.$\par
\noindent 2) Let $W(B)\mathbb B$ be the $\mathbb B$-module generated by $W(B).$  Let $\frak P$ be a maximal ideal of $B.$ The inclusion $\mathbb B\subset W(B)\mathbb B$ induces an equality:
 $$\frac{\mathbb B}{\frak P\mathbb B}= \frac{W(B)\mathbb B}{\frak PW(B)\mathbb B}.$$
\noindent 3) $W(B)\mathbb B$ is a fractional ideal of $\mathbb B$. In particular, it is discrete in $\mathbb H_\infty$.
\end{lemma}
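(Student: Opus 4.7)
For part (1), I would first note that $A \otimes B \subseteq W(B)$ since $B$ is the $i=0$ summand and $\rho(a) = \sum_j \phi_{a,j}\, f \cdots f^{(j-1)} \in W(B)$ (the coefficients $\phi_{a,j}$ lie in $B$). The key step is showing $W(B)$ is closed under multiplication by $\rho(A)$: since $\rho(a) = a \otimes 1$ is fixed by $\tau$, applying $\tau^k$ to the above expansion yields $\rho(a) = \sum_j \phi_{a,j}^{q^k}\, f^{(k)} \cdots f^{(j+k-1)}$, from which
$$\rho(a) \cdot f \cdots f^{(k-1)} = \sum_j \phi_{a,j}^{q^k}\, f \cdots f^{(j+k-1)} \in W(B),$$
because each $\phi_{a,j}^{q^k}$ still lies in $B$. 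The $A \otimes A$-module structure via $\varphi$ is then immediate: $\varphi_a(f \cdots f^{(k-1)}) = \sum_j \phi_{a,j}\, f \cdots f^{(j-1)} \cdot \tau^j(f \cdots f^{(k-1)}) = \sum_j \phi_{a,j}\, f \cdots f^{(j+k-1)} \in W(B)$.

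For part (2), I would use the input $v_{\frak P}(f) = 0$ established in the setup: then $v_{\frak P}(f \cdots f^{(i-1)}) = \sum_{k=0}^{i-1} q^k v_{\frak P}(f) = 0$ for every $i \geq 0$, so every generator of $W(B)$ lies in the localization $\mathbb B_{\frak P \mathbb B}$. Hence $W(B)\mathbb B \subseteq \mathbb B_{\frak P \mathbb B}$, and combined with the trivial inclusion $\mathbb B \subseteq W(B)\mathbb B$, localization at $\frak P \mathbb B$ gives $(W(B)\mathbb B)_{\frak P \mathbb B} = \mathbb B_{\frak P \mathbb B}$. Since $\frak P \mathbb B$ is a maximal ideal of the Dedekind ring $\mathbb B$ (the residue ring $K \otimes_{\mathbb F_q} (B/\frak P)$ is a field because $\mathbb F_q$ is algebraically closed in $K$), the module $W(B)\mathbb B/\frak P W(B)\mathbb B$ is annihilated by $\frak P$ and hence supported only at $\frak P \mathbb B$; comparing with its localization at that prime yields the desired identification with $\mathbb B/\frak P \mathbb B$.

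For part (3), the goal is to show that $W(B)\mathbb B$ is finitely generated over $\mathbb B$; the fractional ideal property will follow, and discreteness in $\mathbb H_\infty$ is then immediate since any fractional ideal of $\mathbb B$ is commensurable with $\mathbb B$, which is an $\mathbb A$-lattice in $\mathbb H_\infty$ as the integral closure of $\mathbb A$ in the finite extension $\mathbb H/\mathbb K$. The identity from part (1),
$$\rho(a) \cdot f \cdots f^{(k-1)} = \sum_{j=0}^{\deg a} \phi_{a,j}^{q^k}\, f \cdots f^{(j+k-1)} \in W(B),$$
has leading coefficient $\overline{\sgn}(a)^{q^k} \in \mathbb F_\infty^\times \subseteq B^\times$ and can therefore be solved for $f \cdots f^{(\deg a + k - 1)}$ as a $\mathbb B$-linear combination of lower-index products together with $\rho(a) \cdot f \cdots f^{(k-1)} \in W(B)$. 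Since $\deg(A \setminus \{0\})$ contains every sufficiently large multiple of $d_\infty$ by Riemann--Roch, letting $k$ vary over $\{0, 1, \ldots, d_\infty - 1\}$ reduces $f \cdots f^{(i-1)}$ for every sufficiently large $i$ to a fixed finite set of generators by induction. The main obstacle is precisely the case $d_\infty > 1$: the naive untwisted relation $\rho(a) = \sum_j \phi_{a,j}\, f \cdots f^{(j-1)}$ only yields reductions at indices divisible by $d_\infty$, and it is the Frobenius-twisting trick isolated in part (1) that bridges the gap and delivers finite generation at every residue class modulo $d_\infty$.
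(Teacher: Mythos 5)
Your parts (1) and (2) coincide with the paper's proof: part (1) uses the same Frobenius-twisted expansion $\rho(a)\,f\cdots f^{(i-1)}=\sum_{j}\phi_{a,j}^{q^{i}}f\cdots f^{(i+j-1)}$ together with $f\cdots f^{(j-1)}\tau^{j}(f\cdots f^{(i-1)})=f\cdots f^{(i+j-1)}$, and part (2) is the same localization at $\frak P\mathbb B$, reading the conclusion off the inclusions $\mathbb B\subset W(B)\mathbb B\subset O_{\frak P}$ together with $O_{\frak P}/\frak P O_{\frak P}\simeq\mathbb B/\frak P\mathbb B$.

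For part (3) you take a genuinely different route. The paper argues via divisors: the explicit formula for $\mathrm{div}(f\cdots f^{(i-1)})$ shows that the poles away from the finite places are uniformly bounded, so there exists a single $a\in A\otimes B\setminus\{0\}$ with $a\,f\cdots f^{(i-1)}\in A\otimes H$ for all $i$; combined with $v_{\frak P}(f\cdots f^{(i-1)})=0$ this yields $a\,W(B)\subset\mathbb B$, i.e.\ a common denominator, so $W(B)\mathbb B$ is a fractional ideal and inherits discreteness from $\mathbb B$. You instead read finite generation directly off the module relations of part (1), using that the leading coefficient $\phi_{a,\deg a}=\overline{\sgn}(a)\in\mathbb F_\infty^\times\subset B^\times$ is a unit so the relation can be solved for the top product. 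This is a valid and more elementary alternative: it sidesteps the divisor computation entirely and makes part (3) a formal consequence of part (1).

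One detail of your part (3) needs repair: restricting $k$ to $\{0,1,\ldots,d_\infty-1\}$ does not close the induction, and the Riemann--Roch detour is unnecessary. Fix a single non-constant $a\in A$ of degree $D$ and let $k$ range over all non-negative integers: the twisted relation at level $k$ expresses $f\cdots f^{(D+k-1)}$ as a $\mathbb B$-linear combination of $f\cdots f^{(i-1)}$ for $k\le i\le k+D-1$ (using $\rho(a)\in\mathbb A\subset\mathbb B$ and the unit leading coefficient), and stepping $k=0,1,2,\ldots$ shows by induction that every $f\cdots f^{(i-1)}$ with $i\ge D$ lies in the $\mathbb B$-span of $\{f\cdots f^{(i-1)}: 0\le i<D\}$. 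The twist already supplies all indices, so there is no need to vary $a$ or to chase residue classes modulo $d_\infty$.
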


\begin{proof}
We have:
 $$\forall i\geq 0, \forall a\in A, \quad \rho(a) f\cdots f^{(i-1)}= \sum_{j=0}^{\deg a} \phi_{a,j}^{q^i} f\cdots f^{(i+j-1)}\in W(B).$$
Observe that:
 $$\forall i,j \geq 0, \quad f\cdots f^{(j-1)}\tau^j(f\cdots f^{(i-1)})= f\cdots f^{(i+j-1)}.$$
The assertion 1) follows.

We set: $O_\frak P=\{x\in \mathbb H, v_{\frak P}(x)\geq 0\}$. Since $\frac{O_{\frak P}}{\frak P O_{\frak P}} \simeq \frac{\mathbb B}{\frak P\mathbb B}$ and $\mathbb B\subset W(B)\mathbb B \subset O_{\frak P},$ the assertion 2) holds.

Let's prove the assertion 3). Note that $A \otimes H$ is the set of elements of $\mathbb H$ which are regular outside $\bar \infty.$ By the expression \eqref{divisor} of the divisor of $f\cdots f^{(i-1)}, i\geq 0,$ there exists $a\in A \otimes B \setminus\{0\}$ such that:
 $$\forall i\geq 0, \quad af\cdots f^{(i-1)}\in A \otimes H.$$
Since  for every maximal ideal $\frak P$ of $B,$ and for all $i \geq 0,$  $v_{\frak P}(f\cdots f^{(i-1)})=0,$ we deduce that:
 $$\forall i\geq 0, af\cdots f^{(i-1)}\in A \otimes B.$$
Thus, there exists $a\in \mathbb B\setminus \{0\}$ such that $aW(B)\subset \mathbb B.$ Since $\mathbb B$ is discrete in $\mathbb H_\infty,$ we get the desired result.
 \end{proof}

Let's observe that, by Lemma \ref{LemmaS7.1}, $W(B)\mathbb B$ is an $\mathbb A$-module via $\varphi.$ Let $\frak P$ be a maximal ideal of $B,$ then, again by Lemma \ref{LemmaS7.1}, $\frac{\mathbb B}{\frak P \mathbb B}$ is an $\mathbb A$-module via $\varphi,$  and we denote this latter $\mathbb A$-module by $\varphi(\frac{\mathbb B}{\frak P \mathbb B}).$

\begin{lemma}\label{LemmaS7.2}
Let $\frak P$ be a maximal ideal of $B.$ Then:
 $${\rm Fitt}_{{\mathbb  A}}\varphi(\frac{\mathbb B}{\frak P \mathbb B})=( [\frac{B}{\frak P B}]_A-\rho([\frac{B}{\frak P B}]_A)){\mathbb A}.$$
\end{lemma}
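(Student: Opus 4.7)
The plan is to show that $\varphi_\theta$ acts as scalar multiplication by $\rho(\theta)$ on $\mathbb B/\frak P \mathbb B$, where $\theta := [\frac{B}{\frak P B}]_A$, and then combine this with a dimension count and a cyclicity argument. Set $n = [\mathbb F_\frak P : \mathbb F_q]$.

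First I would reduce $\varphi_\theta$ modulo $\frak P \mathbb B$. By class field theory combined with the sign-normalization, $\theta A = N_{H/K}(\frak P)$ and $\phi_\theta \equiv \tau^n \pmod{\frak P}$ in $B\{\tau\}$ — this is the same computation used in the proof of Proposition \ref{PropositionS6.2}. It yields $\phi_{\theta,i} \equiv \delta_{i,n} \pmod{\frak P}$, so from $\varphi_\theta = \sum_i \phi_{\theta,i}\, f\cdots f^{(i-1)}\tau^i$ I get $\overline{\varphi_\theta} = \overline{f\cdots f^{(n-1)}}\cdot \tau^n$ as an operator on $\mathbb B/\frak P\mathbb B = K\otimes_{\mathbb F_q}\mathbb F_\frak P$. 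Since $\mathbb F_\frak P = \mathbb F_{q^n}$, the operator $\tau^n$ is the identity on $\mathbb B/\frak P\mathbb B$, so $\overline{\varphi_\theta}$ is multiplication by the scalar $\overline{f\cdots f^{(n-1)}}$. But the Drinfeld correspondence gives $\rho(\theta) = \sum_i \phi_{\theta,i} f\cdots f^{(i-1)}$ in $\mathbb H$, and after reduction modulo $\frak P\mathbb B$ only the $i=n$ term survives, so $\overline{f\cdots f^{(n-1)}} = \rho(\theta)$ in $\mathbb B/\frak P\mathbb B$. Hence $\varphi_\theta$ is scalar multiplication by $\rho(\theta)$, which gives the annihilator inclusion $(\theta-\rho(\theta))\mathbb A \subset \mathrm{Ann}_{\mathbb A}\varphi(\mathbb B/\frak P\mathbb B)$.

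Next I would match $K$-dimensions. Plainly $\dim_K \mathbb B/\frak P\mathbb B = \dim_{\mathbb F_q}\mathbb F_\frak P = n$. On the other side, viewing $\theta\in A$ as a morphism $\Spec A \to \mathbb A^1_{\mathbb F_q}$ of degree $[K:\mathbb F_q(\theta)] = \deg(N_{H/K}(\frak P)) = n$ and base-changing to $K$, the fiber over $\rho(\theta)\in K$ in $\Spec\mathbb A$ has $K$-degree $n$, so $\dim_K \mathbb A/(\theta-\rho(\theta)) = n$.

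The main obstacle is to upgrade the annihilator inclusion to an equality of Fitting ideals, which amounts to showing that $\varphi(\mathbb B/\frak P\mathbb B)$ is cyclic as an $\mathbb A$-module. I expect this to be the delicate point because the $\varphi$-orbit of $1$ is only $\rho(K)$, which is a strict $K$-subspace when $n>1$. The natural route is via Lemma \ref{LemmaS7.1}: the isomorphism $W(B)\mathbb B/\frak P W(B)\mathbb B \simeq \mathbb B/\frak P\mathbb B$, combined with the local triviality of the fractional ideal $W(B)\mathbb B$ at $\frak P\mathbb B$ (guaranteed by $v_\frak P(f\cdots f^{(i-1)})=0$), lets one transport the $\varphi$-action into a concrete computation. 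Alternatively, one may pick $\bar\alpha\in\mathbb F_\frak P$ giving a normal basis $\{\bar\alpha^{q^j}\}$ of $\mathbb F_\frak P/\mathbb F_q$ and check that $1\otimes\bar\alpha$ is a cyclic generator by computing the change-of-basis matrix from $\{\varphi_a(1\otimes\bar\alpha)\}$ to the $\tau$-orbit basis $\{\tau^j(1\otimes\bar\alpha)\}$ and showing its non-degeneracy. Once cyclicity is in hand, the matching dimensions force $\mathrm{Ann}_\mathbb A\varphi(\mathbb B/\frak P\mathbb B) = (\theta-\rho(\theta))\mathbb A$, and for a cyclic module over the Dedekind domain $\mathbb A$ the Fitting ideal coincides with the annihilator, completing the proof.
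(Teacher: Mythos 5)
Your reduction modulo $\frak P\mathbb B$, which shows that $\varphi_\theta$ acts as scalar multiplication by $\rho(\theta)$, and the resulting annihilator inclusion $(\theta-\rho(\theta))\mathbb A\subset \mathrm{Ann}_{\mathbb A}\,\varphi(\mathbb B/\frak P\mathbb B)$, are correct and coincide with the paper's first step (the paper takes $a$ with $aA=P^e$, $\sgn(a)=1$, notes $[\frac{B}{\frak P B}]_A=\psi(P^e)=a$, and observes $\varphi_{a-\rho(a)}$ kills $\mathbb B/\frak P\mathbb B$). Your dimension count and the closing logic --- cyclicity plus matching $K$-dimensions force $\mathrm{Ann}=(\theta-\rho(\theta))\mathbb A$, and the Fitting ideal of a cyclic module over the Dedekind domain $\mathbb A$ is its annihilator --- are also sound.

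The genuine gap is exactly where you flag it: cyclicity of $\varphi(\mathbb B/\frak P\mathbb B)$ over $\mathbb A$ is asserted, not proved. Your first route (``transport the $\varphi$-action via the local triviality of $W(B)\mathbb B$'') is never turned into an argument, and Lemma~\ref{LemmaS7.1} by itself only recovers the $K$-vector space $\mathbb B/\frak P\mathbb B$, not the $\mathbb A$-module structure via $\varphi$. Your second route --- show that the matrix taking $\{\varphi_a(1\otimes\bar\alpha)\}_a$ to the basis $\{1\otimes\bar\alpha^{q^j}\}_j$ is non-degenerate --- is a restatement of cyclicity with no reason supplied. The only obvious mechanism for non-degeneracy, a triangular change of basis as when $A=\mathbb F_q[\theta]$ and one may take $a_j=\theta^j$ for $j=0,\dots,n-1$, breaks down in general: for $g>0$, or already for $d_\infty>1$, the Weierstrass semigroup of degrees realized by $A$ at $\infty$ has gaps (or lies in $d_\infty\mathbb Z$), so one cannot choose elements of every degree $0,\dots,n-1$; after reducing $\tau$-exponents modulo $n=[\mathbb F_\frak P:\mathbb F_q]$, the leading terms of the $\varphi_a(1\otimes\bar\alpha)$ no longer hit all $n$ positions and triangularity is lost. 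The paper bridges this very step by invoking \cite{APT}, Lemma~5.8 ``by similar arguments''; an actual cyclicity argument (or at least such a pointer) is what your proof still owes.
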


\begin{proof}
Recall that:
 $$[\frac{B}{\frak P B}]_A=\psi(P^e),$$
where $e=\dim_{\frac{A}{P}}\frac{B}{\frak P}.$  Set $aA=P^e$ where $\sgn(a)=1.$ Then:
 $$\rho(a)= \sum \phi_{a,i} f\cdots f^{(i-1)}.$$
Therefore:
 $$\forall x\in \frac{\mathbb B}{\frak P\mathbb B}, \quad \varphi_{a-\rho(a)}(x)=0.$$
Thus, by similar arguments to those of \cite{APT}, Lemma 5.8, we have an ${\mathbb  A}$-module isomorphism:
 $$\varphi(\frac{\mathbb B}{\frak P \mathbb B})\simeq \frac{{\mathbb  A}}{(a-\rho(a)){\mathbb A}}.$$
\end{proof}

If $M$ is an ${\mathbb A}$-module such that $M$ is a finite dimensional $\rho(K)$-vector space and  its Fitting ideal is principal, ${\rm Fitt}_{{\mathbb  A}}(M)=x{\mathbb  A}$, then we set:
$$[M]_{{\mathbb  A}}= \frac{x}{\sgn(x)}.$$
By the above Lemma, we can form the $L$-series attached to $\varphi/W(B):$
 $$L(\varphi/W(B))=\prod_{\frak P}\frac{[\frac{\mathbb B}{\frak P \mathbb B}]_{{\mathbb   A}}}{[\varphi(\frac{\mathbb B}{\frak P \mathbb B})]_{{\mathbb  A}}}=\prod_{\frak P}(1-\frac{\rho([\frac{B}{\frak P B}]_A)}{[\frac{B}{\frak P B}]_A})^{-1} \in {\mathbb K_\infty}^\times.$$
Note that $L(\varphi/W( B))$  is in fact an element in the $\infty$-adic  completion of $K_\infty[\rho(A)]=A \otimes K_\infty$ which is an affinoid algebra over $K_\infty$, and $L(\varphi/W( B))$ is a special value of a twisted zeta function (see \cite{ANDTR}, Section 5.2). \par

We denote by $\tau:\mathbb H_\infty\rightarrow \mathbb H_\infty$ the continuous homomorphism of $\rho(K)$-algebras such that $\forall x\in  H_\infty, \tau (x) =x^q.$ Let $z$ be an indeterminate. The map $\tau: \mathbb H_\infty \rightarrow \mathbb H_\infty$ extends uniquely into a continuous homomorphism (for the $z$-adic topology) of $\mathbb F_q[[z]]$-algebras $\tau: \mathbb H_\infty [[z]]\rightarrow \mathbb H_\infty [[z]].$  Let $\mathbb T_z(\mathbb H_\infty)\subset \mathbb H_\infty[[z]]$ be the $\infty$-adic completion of $\mathbb H_\infty[z],$ i.e. an element $g\in \mathbb T_z(\mathbb H_\infty)$ can be uniquely written $g=\sum_{i\geq 0} g_i z^i, g_i \in \mathbb H_\infty,$ such that $\lim_{i\rightarrow +\infty} g_i=0.$ We also denote by $\mathbb  T_z(\mathbb K_\infty)$ the $\infty$-adic completion of $\mathbb K_\infty[z].$ Note that $\mathbb T_z(\mathbb H_\infty)$ is a free $\mathbb T_z(\mathbb K_\infty)$-module  of rank $[H:K],$ and if $(e_1, \ldots, e_n)$ is a $K$-basis of $H$ ($n=[H:K]$)), then:
 $$\mathbb T_z(\mathbb H_\infty)=\oplus _{i=1}^n e_i \mathbb T_z(\mathbb K_\infty).$$

Observe also that $G$ acts on $\mathbb T_z(\mathbb H_\infty)$ and $\mathbb T_z(\mathbb H_\infty)$ is a free $\mathbb T_z(\mathbb K_\infty)[G]$-module of rank one by the normal basis Theorem. We denote by $\mathbb  T_z(\mathbb H_\infty)[G]$ the  ring:
 $$\mathbb  T_z(\mathbb H_\infty)[G] :=\oplus_{\sigma \in G} \mathbb T_z(\mathbb H_\infty)\sigma,$$
where the product rule is given by:
 $$\forall \sigma_1, \sigma_2\in  G, \forall g_1, g_2\in \mathbb T_z(\mathbb H_\infty), \quad (g_1\sigma_1)\, (g_2\sigma_2)= g_1\sigma_1(g_2)\,  \sigma_1\sigma_2.$$
Let's set:
 $$\exp_{\widetilde{\varphi}}=\sum_{i\geq 0}\frac{f\cdots f^{(i-1)}}{f\cdots f^{(i-1)}\mid_{\xi^{(i)}}}z ^i\tau^i\in \mathbb H[z]\{\{\tau \}\}.$$
Let $I$ be a non-zero ideal  of $A.$ We set:
 $$u_I=\sum_{i=0}^{\deg I} \phi_{I,i} f\cdots f^{(i-1)}\in W(B),$$
where $\phi_I=\sum_{i=0}^{\deg I} \phi_{I,i} \tau^i, $ $\phi_{I,i} \in B.$ Note that if $I=aA,$ we have:
 $$u_I=\frac{\rho(a)}{\sgn(a)}.$$
Furthermore, we prove (see \cite{AND}, Section 3.7 for the case $d_\infty=1$):

\begin{lemma}
Let $I, J$ be two non-zero ideals of $A$. We have:
 $$u_I\mid_{\xi}=\psi(I),$$
 $$\sigma_I(f)u_I=f\tau(u_I),$$
 $$u_{IJ}= \sigma_I(u_J)u_I.$$
\end{lemma}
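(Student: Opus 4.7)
The plan is to prove the three identities in order, with (2) as the core and (3) following from it.

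\textbf{Identity (1)} is immediate from the divisor formula $(f) = V^{(1)} - V + (\xi) - (\bar\infty)$: the shtuka function $f$ has a simple zero at $\xi$, so each product $f \cdots f^{(i-1)}$ vanishes at $\xi$ for $i \geq 1$, and evaluating $u_I = \sum_{i=0}^{\deg I} \phi_{I,i}\, f \cdots f^{(i-1)}$ at $\xi$ leaves only the constant term $\phi_{I,0} = \psi(I)$.

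\textbf{Identity (2)} requires more work. Writing $g_n := f \cdots f^{(n-1)}$, I start from the intertwining $\phi_I \phi_a = \phi^{\sigma_I}_a \phi_I$, which at the $\tau^n$-coefficient level reads
\[
\sum_{i+j=n} \phi_{I,i}\, \phi_{a,j}^{q^i} \;=\; \sum_{i+j=n} \sigma_I(\phi_{a,j})\, \phi_{I,i}^{q^j}.
\]
Multiplying by $g_n$, summing over $n$, and using the two factorizations $g_{i+j} = g_i g_j^{(i)} = g_j g_i^{(j)}$ together with the $\tau$-invariance of $\rho(a) = \sum_k \phi_{a,k} g_k$, the left side collapses to $\rho(a) u_I$ while the right becomes $\sum_j \sigma_I(\phi_{a,j})\, g_j\, \tau^j(u_I)$. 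On the other hand, since $\sigma_I$ fixes $\rho(A)$ pointwise and commutes with $\tau$ on $A \otimes B$, applying $\sigma_I$ to $\rho(a) = \sum_j \phi_{a,j} g_j$ also gives $\rho(a) = \sum_j \sigma_I(\phi_{a,j})\, \sigma_I(g_j)$; multiplying by $u_I$ and subtracting yields
\[
\sum_{j \geq 0} \sigma_I(\phi_{a,j})\, \epsilon_j \;=\; 0 \quad \text{for every } a \in A,
\]
where $\epsilon_j := \sigma_I(g_j) u_I - g_j \tau^j(u_I)$. Since $\epsilon_0 = 0$ trivially, induction on $j$ using elements $a \in A$ of degree $j$ (whose leading coefficient $\phi_{a,j} = \sgn(a) \in \mathbb F_\infty^\times$ is a unit) forces $\epsilon_j = 0$ for every $j \geq 1$; the case $j = 1$ is exactly $\sigma_I(f) u_I = f \tau(u_I)$.

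\textbf{Identity (3)} follows from (2). Comparing $\tau$-degree, monicity, and the intertwining relations shows $\phi_{IJ} = \phi^{\sigma_I}_J \phi_I$ in $B\{\tau\}$, so the general product formula $u_{h_1 h_2} = \sum_i h_{1,i}\, g_i\, \tau^i(u_{h_2})$, established by the same $g_{i+j} = g_i g_j^{(i)}$ manipulation, gives $u_{IJ} = \sum_k \sigma_I(\phi_{J,k})\, g_k\, \tau^k(u_I)$. Iterating (2) yields $g_k \tau^k(u_I) = \sigma_I(g_k) u_I$ for every $k \geq 0$, whence $u_{IJ} = \sigma_I\!\bigl(\sum_k \phi_{J,k}\, g_k\bigr)\, u_I = \sigma_I(u_J)\, u_I$. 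The main obstacle lies in the induction of (2): since $\deg a \in d_\infty\, \mathbb Z_{\geq 0}$ and $A$ may have Weierstrass gaps, the naive extraction of $\epsilon_j$ fails for those $j$ without elements of degree exactly $j$, and one must instead solve a linear system indexed by several $a$'s of nearby available degrees, exploiting that as $a$ varies the vectors $(\sigma_I(\phi_{a,j}))_j$ span a large enough subspace of $B^{\mathbb N}$ to force $\epsilon_1 = 0$.
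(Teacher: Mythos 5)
Your treatment of identity (1) matches the paper's: it is immediate because $f$ vanishes at $\xi$, so $g_i := f\cdots f^{(i-1)}$ vanishes at $\xi$ for $i\ge1$ and only the constant term $\phi_{I,0}=\psi(I)$ survives. Your deduction of identity (3) from the iterated form of (2), $\sigma_I(g_k)u_I = g_k\tau^k(u_I)$ for all $k\ge 0$, is also sound and coincides with what the paper does, except that the paper streamlines the bookkeeping by passing through the $B$-module isomorphism $\gamma_\phi: W(B)\to B\{\tau\}$, $g_i\mapsto\tau^i$, writing $\gamma_\phi(u_{IJ})=\phi_{IJ}=\sigma_I(\phi_J)\phi_I=\gamma_\phi(\sigma_I(u_J)u_I)$; the content is the same, and the paper's route is cleaner.

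The argument for identity (2), however, has a genuine gap. Your translation of $\phi_I\phi_a=\phi^{\sigma_I}_a\phi_I$ into
\[
\rho(a)\,u_I \;=\; \sum_{j\ge 0}\sigma_I(\phi_{a,j})\,g_j\,\tau^j(u_I),
\]
and hence into $\sum_{j}\sigma_I(\phi_{a,j})\,\epsilon_j=0$ for all $a\in A$, where $\epsilon_j=\sigma_I(g_j)u_I-g_j\tau^j(u_I)$, is correct (and indeed your left-hand collapse uses that $\tau$ fixes $\rho(A)$, and the subtraction uses that $\sigma_I$ fixes $\rho(A)$ and commutes with $\tau$ on $A\otimes B$, both true). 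But the conclusion that these relations force $\epsilon_1=0$ does not follow. The map $a\mapsto(\sigma_I(\phi_{a,j}))_{j}$ is $\mathbb F_q$-linear, so for a fixed truncation level $d$ the coefficient vectors coming from $\{a\in A:\deg a\le d\}$ span an $\mathbb F_q$-space of dimension $\dim_{\mathbb F_q}A_{\le d}=d-g+1$ (for $d\gg 0$, by Riemann--Roch). Discarding the constants, which only reprove $\epsilon_0=0$, leaves $d-g$ relations on the $d$ quantities $\epsilon_1,\dots,\epsilon_d$; the deficit is exactly $g$, so for $g\ge 1$ no amount of ``solving a linear system indexed by several $a$'s'' can pin $\epsilon_1$ down. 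You acknowledge the Weierstrass-gap obstruction, but the proposed remedy is precisely what the rank count rules out. What is missing is an input beyond these linear relations: either feed in the recursion $\epsilon_j=\sigma_I(g_{j-1})^{(1)}\delta+f\tau(\epsilon_{j-1})$ (with $\delta=\epsilon_1$) to make the system in the single unknown $\delta$ and its $\tau$-twists, and then argue it is overdetermined, or — as the paper does implicitly via the reference to Anderson's Section 3.7 — invoke the shtuka picture (the divisor of $u_I$ and the morphism-of-shtukas condition $\sigma_I(f)\,u=f\,\tau(u)$) rather than pure coefficient algebra. As written, (2) is not established, and since (3) rests on it, neither is (3).
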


\begin{proof}
The fact that $u_I\mid_{\xi}=\psi(I)$ comes from the definition of $u_I.$ Note that we have a natural isomorphism of $B$-modules:
 $$\gamma_\phi: W(B)\simeq  B\{\tau\}, \quad f\cdots f^{(i-1)}\mapsto \tau^i.$$
In particular:
 $$\forall x\in W(B), \quad \gamma_\phi (fx^{(1)})= \tau \gamma_\phi (x),$$
 $$\forall x\in W(B), \forall a\in A, \quad \gamma_\phi (\rho(a)x)=\gamma_\phi(x)\phi_a.$$
By explicit reciprocity law (see \cite{GOS}, Theorem 7.4.8), we have:
 $$\forall a \in A, \quad \phi_I \phi_a=\sigma_I(\phi)_a \phi_I.$$
By direct calculations, we deduce from this:
 $$\sigma_I(f)u_I=f\tau(u_I),$$

Now, let $J$ be a non-zero ideal of $A.$ We have:
$$\gamma_\phi(u_{IJ})= \phi_{IJ}= \sigma_I(\phi_J)\phi_I.$$
But, since $\forall i\geq 0,$ $\sigma_I(f\cdots f^{(i-1)})u_I= f\cdots f^{(i-1)}u_I^{(i)},$ we have :
$$\gamma_\phi (\sigma_I(u_J) u_I)=\sigma_I(\phi_J) \phi_I.$$
Thus:
$$u_{IJ}= \sigma_I(u_J)u_I.$$
\end{proof}

We deduce that if $P, Q$ are maximal ideals of $A:$
 $$(1-\frac{u_P}{\psi(P)}z^{\deg P}\sigma_P)(1-\frac{u_Q}{\psi(Q)}z^{\deg Q}\sigma_Q)=(1-\frac{u_Q}{\psi(Q)}z^{\deg (Q)}\sigma_Q)(1-\frac{u_P}{\psi(P)}z^{\deg P}\sigma_P).$$
For every integer $n\geq 1,$ we set:
 $$(1-\frac{u_P}{\psi(P)^n}z^{\deg P}\sigma_P)^{-1}:=\sum_{k\geq 0} \frac{u_{P^k}}{\psi(P^k)^n}z^{k\deg P} \sigma_{P^k} \in \mathbb T_z (\mathbb H_\infty)[G].$$
We define:
 $$\forall n\geq 1, \quad \mathcal L(\varphi;n;z)=\prod_P(1-\frac{u_P}{\psi(P)^n}z^{\deg P}\sigma_P)^{-1}\in (\mathbb  T_z(\mathbb H_\infty)[G])^\times,$$
where $P$ runs through the maximal ideals of $A.$ Note that, for any $n\geq 1,$ $  \mathcal L(\varphi;n;z)$ induces a $\mathbb T_z(\mathbb K_\infty)$-linear endomorphism of $\mathbb T_z(\mathbb H_\infty),$ and we denote by $\det_{\mathbb T_z(\mathbb K_\infty)}\mathcal L(\varphi;n;z)$ its determinant. Let's set:
 $$W(B[z])=\oplus _{i\geq 0} B[z] f\cdots f^{(i-1)}\subset \mathbb H[z].$$

\begin{proposition}\label{PropositionS7.1}
We have:
 $$\forall n\geq 1, \quad {\rm det}_{\mathbb T_z(\mathbb K_\infty)}\mathcal L(\varphi;n;z)=\prod_{\frak P}(1-\frac{\rho([\frac{B}{\frak P B}]_A)z^{\deg N_{H/K}(\frak P)}}{[\frac{B}{\frak P B}]_A^n})^{-1}\in \mathbb T_z({\mathbb  K}_\infty)^\times,$$
where $\frak P$ runs through the maximal ideals of $B.$
\end{proposition}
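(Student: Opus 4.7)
The proof will follow the same overall template as Theorem \ref{TheoremS6.1}, with the role of $1/\psi(P)$ replaced by $u_P/\psi(P)^n$. The first step is to justify that the determinant of the Euler product equals the product of determinants: for any place $w$ of $\mathbb{H}$ above $\infty$, one has $w(u_P/\psi(P)^n) \to +\infty$ as $\deg P \to \infty$ (because $u_P\mid_\xi=\psi(P)$ and the tail of the expansion of $u_P$ is controlled), so
$$\lim_{N\to +\infty}\prod_{\deg P\geq N}\Bigl(1-\tfrac{u_P}{\psi(P)^n}z^{\deg P}\sigma_P\Bigr)^{-1}=1$$
in $\mathbb{T}_z(\mathbb{H}_\infty)[G]$. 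Hence
$${\det}_{\mathbb{T}_z(\mathbb{K}_\infty)}\mathcal{L}(\varphi;n;z)=\prod_{P}{\det}_{\mathbb{T}_z(\mathbb{K}_\infty)}\Bigl(1-\tfrac{u_P}{\psi(P)^n}z^{\deg P}\sigma_P\Bigr)^{-1},$$
so the problem reduces to one prime $P$ of $A$ at a time.

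Fix $P$ and set $\xi_P=\tfrac{u_P}{\psi(P)^n}z^{\deg P}\sigma_P$, viewed as a $\mathbb{K}[z]$-linear endomorphism of $\mathbb{H}[z]$. Let $e$ be the order of $\sigma_P$ in $G$, and choose $\theta\in A$ with $\sgn(\theta)=1$ and $P^e=\theta A$. The key computation is that $\xi_P^e\in\mathbb{K}[z]$: using the identity $u_{IJ}=\sigma_I(u_J)u_I$ iteratively and the formula $u_{aA}=\rho(a)/\sgn(a)$, one obtains $u_{P^e}=\prod_{i=0}^{e-1}\sigma_P^i(u_P)=u_{\theta A}=\rho(\theta)$, and similarly $\psi(P^e)=\theta$ by Lemma \ref{LemmaS6.2}. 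Therefore
$$\xi_P^{\,e}=\frac{\rho(\theta)}{\theta^{n}}z^{e\deg P}\in\mathbb{K}[z].$$

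Since $e$ is the order of $\sigma_P$, Dedekind's linear independence theorem shows $1,\sigma_P,\dots,\sigma_P^{e-1}$ are linearly independent over $\mathbb{H}(z)$, so $X^e-\tfrac{\rho(\theta)}{\theta^n}z^{e\deg P}$ is the minimal polynomial of $\xi_P$ over $\mathbb{H}^{\langle\sigma_P\rangle}(z)$; thus, exactly as in the proof of Theorem \ref{TheoremS6.1},
$${\det}_{\mathbb{K}[z]}\bigl(1-\xi_P\bigr)\big|_{\mathbb{H}[z]}=\Bigl(1-\tfrac{\rho(\theta)}{\theta^n}z^{e\deg P}\Bigr)^{[H:K]/e}.$$

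To finish, I translate this into an Euler factor over the primes of $B$ above $P$. Since $H/K$ is unramified at $P$, the number of primes $\mathfrak{P}$ of $B$ above $P$ is $[H:K]/e$, each with residue degree $e$; hence $[\tfrac{B}{\mathfrak{P}}]_A=\psi(P^e)=\theta$, $\rho([\tfrac{B}{\mathfrak{P}}]_A)=\rho(\theta)$, and $\deg N_{H/K}(\mathfrak{P})=e\deg P$. The local factor at $P$ therefore rewrites as
$$\prod_{\mathfrak{P}\mid P}\Bigl(1-\tfrac{\rho([\tfrac{B}{\mathfrak{P}}]_A)\,z^{\deg N_{H/K}(\mathfrak{P})}}{[\tfrac{B}{\mathfrak{P}}]_A^{\,n}}\Bigr)^{-1},$$
and multiplying over all $P$ yields the stated formula. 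The only delicate point is the identification $\xi_P^e=\rho(\theta)/\theta^n\cdot z^{e\deg P}$, i.e.\ verifying that the shtuka-normalization of $u_P$ is compatible with the Galois-equivariant product $u_{P^e}=\prod_{i=0}^{e-1}\sigma_P^i(u_P)$; everything else is a direct repetition of the argument of Theorem \ref{TheoremS6.1}.
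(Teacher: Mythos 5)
Your proof is correct and matches the paper's argument: factor the determinant over primes $P$ of $A$, compute $\xi_P^e=\frac{\rho(\psi(P^e))}{\psi(P^e)^n}z^{e\deg P}$ via the cocycle identities $u_{IJ}=\sigma_I(u_J)u_I$ and $\psi(IJ)=\sigma_J(\psi(I))\psi(J)$, invoke Dedekind to get the minimal and characteristic polynomials of $\xi_P$, and reinterpret as an Euler product over primes $\frak P$ of $B$ using $[\tfrac{B}{\frak P}]_A=\psi(P^e)$ and residue degree $e$. You even repair a small imprecision in the paper's sketch by correctly taking $e$ to be the order of $\sigma_P$ in $G$ (equivalently, of $P$ in $\mathcal I(A)/\mathcal P_+(A)$) rather than in $\Pic(A)$, and you make explicit the identity $u_{P^e}=\prod_{i=0}^{e-1}\sigma_P^i(u_P)$, which the paper leaves tacit.
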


\begin{proof} The proof is similar to that of Theorem \ref{TheoremS6.1} . We give a sketch of the proof for the convenience of the reader.

Let $n\geq 1.$ We have:
 $${\rm det}_{\mathbb T_z(\mathbb K_\infty)}\mathcal L(\varphi;n;z)=\prod_P{\rm det}_{\mathbb K[z]}(1-\frac{u_P}{\psi(P)^n}z^{\deg P}\sigma_P\mid_{\mathbb H[z]})^{-1}.$$
Let $P$ be a maximal ideal of $A.$ Let $e\geq 1$ be the order of $P$ in ${\rm Pic}(A).$ Then $1,\sigma_P, \ldots, \sigma_P^{e-1}$ are linearly independent over $\mathbb H(z).$  We have :
 $$(\frac{u_P}{\psi(P)^n}z^{\deg P}\sigma_P)^e=\frac{\rho(\psi(P^e)) z^{e\deg P}}{\psi (P^e)^n}\in \mathbb K[z].$$
Thus the minimal polynomial of $\frac{u_P}{\psi(P)^n}z^{\deg P}\sigma_P\mid_{\mathbb H(z)}$ over $\mathbb K(z)$ (and also over $\mathbb H^{\langle \sigma_P \rangle}(z)$) is equal to:
 $$X^e-\frac{\rho(\psi(P^e)) z^{e\deg P}}{\psi (P^e)^n}\in \mathbb K[z][X].$$
Therefore the characteristic polynomial of $\frac{u_P}{\psi(P)^n}z^{\deg P}\sigma_P\mid_{\mathbb H(z)}$ over $\mathbb K(z)$ is equal to:
 $$(X^e-\frac{\rho(\psi(P^e)) z^{e\deg P}}{\psi (P^e)^n})^{\frac{[H:K]}{e}}.$$
One obtains the desired result by the same arguments as that used in the proof of Theorem \ref{TheoremS6.1}.
\end{proof}

\begin{remark}\label{RemarkOmega}
Let $L=\rho(K)(\mathbb F_\infty)((^{q^{d_\infty}-1}\sqrt{-\pi})),$ and let $\tau: L\rightarrow L$ be the continuous morphism of $\rho(K)$-algebras such that $\forall x\in \mathbb F_\infty((^{q^{d_\infty}-1}\sqrt{-\pi})), \tau(x)=x^q.$ Then there exists an element  $\omega\in L^\times$ (unique up to the multiplication of an element in $\rho(K)^\times$) such that:
$$\tau (\omega)=f\omega.$$
This element is a generalization of the special function introduced by G. Anderson and D. Thakur  in \cite{AND&THA}. The existence of this element (combined with the log-algebraicity theorem) gives new arithmetic informations on special values of $L$-series.  We refer the interested reader to a forthcoming work of the authors. 
\end{remark}


\subsection{Stark units and several variable log-algebraicity theorem}${}$

\medskip

We set:
 $$U(\widetilde{\varphi}/W(B[z]))=\{ x\in \mathbb T_z(\mathbb H_\infty), \exp_{\widetilde{\varphi}}(x)\in W(B[z])\}.$$
The following result is a twisted (by the shtuka function $f$) version of \cite{AND}, Theorem 5.1.1. :

\begin{theorem}\label{TheoremS7.1}
We have:
 $$U(\widetilde{\varphi}/W(B[z]))=\mathcal L(\varphi;1;z) W(B[z]).$$
In particular,
 $$\exp_{\widetilde{\varphi}}(\mathcal L(\varphi;1;z) W(B[z]))\subset W(B[z]),$$
\end{theorem}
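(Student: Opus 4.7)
My plan is to mimic the proof of Theorem \ref{TheoremS6.2} in the shtuka setting, localizing at each finite prime of $\mathbb{B}$ and then globalizing. For each maximal ideal $\frak{P}$ of $B$ lying over a prime $P$ of $A$, let $R_\frak{P}$ denote the localization of $\mathbb{B}$ at $\frak{P}\mathbb{B}$; by Lemma \ref{LemmaS7.1}(2) applied to the fractional ideal $W(B)$ of $\mathbb{B}$ together with Nakayama's lemma, we have $W(B)R_\frak{P} = R_\frak{P}$. The first step is to establish the shtuka analogue of Lemma \ref{LemmaS6.5}: both $\exp_{\widetilde{\varphi}}$ and $\log_{\widetilde{\varphi}}$ send $\frak{P}R_\frak{P}[[z]]$ into itself, so that $\exp_{\widetilde{\varphi}}(\frak{P}R_\frak{P}[[z]]) = \frak{P}R_\frak{P}[[z]]$ and $\exp_{\widetilde{\varphi}}$ induces an $A$-module isomorphism $R_\frak{P}[[z]]/\frak{P}R_\frak{P}[[z]] \simeq \widetilde{\varphi}(R_\frak{P}[[z]]/\frak{P}R_\frak{P}[[z]])$. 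This reduces to the estimates for $e_i(\phi)$ and $l_i(\phi)$ from Lemma \ref{LemmaS6.5}, since $v_\frak{P}(f^{(i)}) = 0$ for every $i \geq 0$ as noted after Lemma \ref{LemmaS7.1}.

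Next, using the identities $u_P\mid_\xi = \psi(P)$ and $\sigma_P(f)u_P = f\tau(u_P)$ from the lemma preceding Proposition \ref{PropositionS7.1}, I would verify the intertwining
$$\widetilde{\varphi}_P \, \exp_{\widetilde{\varphi}} \;=\; u_P \, z^{\deg P}\, \sigma_P(\exp_{\widetilde{\varphi}})\, \tau^{\deg P}$$
in $\mathbb{H}[z]\{\{\tau\}\}$; this is the $u_P$-twisted, $z$-deformed analogue of $\phi_P \exp_\phi = \sigma_P(\exp_\phi)\psi(P)$. Arguing then exactly as in Step 4 of the proof of Theorem \ref{TheoremS6.2}, an element $x \in \mathbb{H}_\infty[[z]]$ lies in $U(\widetilde{\varphi}/R_\frak{P}[[z]])$ if and only if $(\psi(P) - u_P z^{\deg P}\sigma_P)(x) \in \frak{P}R_\frak{P}[[z]]$, which unwinds to
$$U(\widetilde{\varphi}/R_\frak{P}[[z]]) \;=\; \left(1 - \frac{u_P}{\psi(P)}\, z^{\deg P}\,\sigma_P\right)^{-1} R_\frak{P}[[z]].$$

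Finally, intersecting these local characterizations over all maximal ideals $\frak{P}$ of $B$ yields
$$\bigl\{\,x \in \mathbb{H}_\infty[[z]] : \exp_{\widetilde{\varphi}}(x) \in W(B)[[z]]\,\bigr\} \;=\; \mathcal{L}(\varphi; 1; z)\, W(B)[[z]],$$
and intersecting both sides with $\mathbb{T}_z(\mathbb{H}_\infty)$, together with the invertibility $\mathcal{L}(\varphi;1;z) \in (\mathbb{T}_z(\mathbb{H}_\infty)[G])^\times$ from Proposition \ref{PropositionS7.1}, collapses the right-hand side to $\mathcal{L}(\varphi;1;z)\, W(B[z])$, giving the stated equality. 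The main obstacle is the second step: the operator $u_P z^{\deg P}\sigma_P$ plays the role of the plain $z^{\deg P}\sigma_P$ from Theorem \ref{TheoremS6.2} only because of the shtuka identity $\sigma_P(f)u_P = f\tau(u_P)$, and carefully propagating this identity through the infinite series $\exp_{\widetilde{\varphi}}$ to recover the local unit characterization requires tracking the interplay of the $f$-twist with the $z$-deformation.
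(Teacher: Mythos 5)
Your high-level plan — localize at finite primes, establish an intertwining identity so that $\exp_{\widetilde{\varphi}}$ transports the local integrality condition to the Euler factor, intersect over all primes, and restrict to the Tate algebra — is indeed the route the paper takes. But two of the concrete steps are off, one of them seriously.

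The central problem is the intertwining identity. The correct shtuka analogue of $\widetilde{\phi}_P\exp_{\widetilde{\phi}}=\sigma_P(\exp_{\widetilde{\phi}})\psi(P)$ is
$$\widetilde{\varphi}_P\,\exp_{\widetilde{\varphi}}=\exp_{\sigma_P}\,\psi(P),\qquad \exp_{\sigma_P}:=\sum_{i\geq 0}\sigma_P\bigl(e_i(\phi)\bigr)\,f\cdots f^{(i-1)}\,z^i\tau^i,$$
in which $\sigma_P$ acts only on the exponential coefficients $e_i(\phi)$ and not on the shtuka products $f\cdots f^{(i-1)}$. Your proposed identity $\widetilde{\varphi}_P\exp_{\widetilde{\varphi}}=u_P\,z^{\deg P}\,\sigma_P(\exp_{\widetilde{\varphi}})\,\tau^{\deg P}$ cannot hold: its left side has constant term $\psi(P)$ at $\tau^0$, while its right side begins at $\tau^{\deg P}$, so the two sides already disagree in degree; moreover $\sigma_P(\exp_{\widetilde{\varphi}})$ carries the unwanted Galois twist of the $f$-products. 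The function $u_P$ and the relation $\sigma_P(f\cdots f^{(i-1)})u_P=f\cdots f^{(i-1)}\tau^i(u_P)$ do not enter the intertwining at all; they enter later, in the verification that the operator $\widetilde{\varphi}_P-z^{\deg P}u_P\sigma_P$ annihilates $\widetilde{\varphi}\bigl(W_P/PW_P\bigr)$ and that $u_P\sigma_P(\exp_{\widetilde{\varphi}}(x))=\exp_{\sigma_P}(u_P\sigma_P(x))$.

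There is also a structural issue with your localization. You localize $\mathbb{B}$ at the prime $\frak{P}\mathbb{B}$ and replace $W(B)\mathbb{B}$ by $R_{\frak{P}}=\mathbb{B}_{\frak{P}\mathbb{B}}$, which discards the grading $W(B)=\oplus_{i\geq 0}Bf\cdots f^{(i-1)}$ entirely. The globalization then cannot recover $W(B[[z]])$: intersecting $R_{\frak{P}}[[z]]$ over only the primes of the form $\frak{P}\mathbb{B}$ misses the primes of $\mathbb{B}$ lying over $(0)\subset B$ (for instance the diagonal prime $\xi$), and in any case produces a subring of $\mathbb{H}[[z]]$ that is strictly larger than $W(B[[z]])$, which is not even a $\mathbb{B}$-submodule. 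The paper avoids both problems by localizing the module $W(B[[z]])$ itself as an $A$-module, setting $W_P=S^{-1}W(B[[z]])$ with $S=A\setminus P$; this retains the $f$-graded structure and satisfies $\bigcap_P W_P=W(B[[z]])$, so the globalization closes up. You should keep your first step (the $\frak{P}$-adic control of $e_i(\phi)$, $l_i(\phi)$ and $f^{(i)}$ via Lemma \ref{LemmaS6.5} and $v_{\frak{P}}(f^{(i)})=0$), but replace the localization by the semi-local $W$-module and fix the intertwining identity before the argument will go through.
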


\begin{proof}
The proof is similar to that of Theorem \ref{TheoremS6.2}. We give a sketch of the proof for the convenience of the reader.\par
Observe that $\exp_{\widetilde{\varphi}}:\mathbb H[[z]]\rightarrow \mathbb H[[z]]$ is an isomorphism of $\mathbb A[[z]]$-modules. Furthermore, if we set:
 $$W(H[[z]])=\oplus_{i\geq 0}H[[z]] f\cdots f^{(i-1)},$$
we get:
 $$\exp_{\widetilde{\varphi}}(W(H[[z]]))=W(H[[z]]).$$
Let:
 $$W(B[[z]])=\oplus _{i\geq 0} B[[z]]f\cdots f^{(i-1)}\subset \mathbb H[[z]].$$
Let $P$ be a maximal ideal of $A.$ Let $W_P=S^{-1}W(B[[z]]),$ where $S=A\setminus P.$ Then:
 $$PW_P=\psi(P)W_P.$$
By Lemma \ref{LemmaS6.5}, we have:
 $$\exp_{\widetilde{\varphi}}(PW_P)=PW_P.$$
If $$\phi_P= \sum_{i=0}^{\deg P} \phi_{P,i} \tau ^i,$$
we set:
 $$\widetilde{\varphi}_P= \sum_{i=0}^{\deg P} \phi_{P,i} f\cdots f^{(i-1)}z^i \tau^i.$$
We have:
 $$\widetilde{\varphi}_P\exp_{\widetilde{\varphi}} = \exp_{\sigma_P}\, \psi(P),$$
where:
 $$\exp_{\sigma_P}=\sum_{i\geq 0} \sigma_P(e_i(\phi)) f\cdots f^{(i-1)} z^i \tau^i.$$
Let's set:
 $$U(\widetilde{\varphi}/W_P)=\{ x\in \mathbb H[[z]], \exp_{\widetilde{\phi}}(x)\in W_P\}\subset W(H[[z]]).$$
We have  an isomorphism of $A[[z]]$-modules induced by $\exp_{\widetilde{\varphi}}$:
 $$\frac{U(\widetilde{\varphi}/W_P)}{PW_P}\simeq \widetilde{\varphi}(\frac{W_P}{PW_P}).$$
Note that:
 $$\forall i\geq 0, \quad \sigma_P (f\cdots f^{(i-1)}) u_P = f\cdots f^{(i-1)}\tau^i (u_P)\in W(B).$$
Therefore:
 $$(\widetilde{\varphi}_P-z^{\deg P}u_P \sigma_P)\widetilde{\varphi}(\frac{W_{P}}{PW_{P}})=\{0\}.$$
Since $u_P$ is a ``$P$-unit'', for $x\in W( H[[z]])\setminus W_P,$ $(\widetilde{\varphi}_P-z^{\deg P} u_P\sigma_P)(x)$ is not $P$-integral as an element of $\mathbb H[[z]].$
Thus:
 $$\widetilde{\varphi}(\frac{W_P}{PW_P[[z]]})=\{ x\in \widetilde{\varphi}(\frac{W( H[[z]])}{PW_P}),(\widetilde{\varphi}_P-z^{\deg P} u_P\sigma_P)(x)=0\}.$$
Let $x\in W(H[[z]]),$ we deduce that:
 $$x\in U(\widetilde{\varphi}/W_P)\Leftrightarrow (\widetilde{\varphi}_P-z^{\deg P} u_P\sigma_P)(\exp_{\widetilde{\varphi}}(x))\in PW_P.$$
Thus:
 $$x\in U(\widetilde{\varphi}/W_P)\Leftrightarrow \exp_{\sigma_P} (\psi(P) x-z^{\deg P} u_P\sigma_P(x))\in PW_P.$$

Lemma \ref{LemmaS6.5} implies:
 $$x\in U(\widetilde{\varphi}/W_P)\Leftrightarrow \psi(P)x-z^{\deg P}u_P \sigma_P(x)\in PW_P.$$
Thus:
 $$U(\widetilde{\varphi}/W_P)= (1-\frac{z^{\deg P}u_P}{\psi(P)}\sigma_P)^{-1} W_P.$$
Observe that $W(B[[z]])=\bigcap_PW_P.$ We conclude that:
 $$W(B[[z]])=\exp_{\widetilde{\varphi}}(\mathcal L(\varphi;1;z)W(B[[z]])).$$
By Lemma \ref{LemmaS7.1}, we get:
 $$\exp_{\widetilde{\varphi}}(\mathcal L(\varphi;1;z)W(B[z]))\subset  \mathbb T_z(\mathbb H_\infty)\cap W(B[[z]])= W(B[z]).$$
Recall that:
 $$U(\widetilde{\varphi}/W(B[z]))=\{ x\in \mathbb T_z(\mathbb H_\infty), \exp_{\widetilde{\varphi}}(x)\in W(B[z])\}.$$
Then:
 $$U(\widetilde{\varphi}/W(B[z]))=\mathcal L(\varphi;1;z)W(B[[z]]) \cap \mathbb T_z(\mathbb H_\infty).$$
But recall that:
 $$\mathcal L(\varphi;1;z)\in (\mathbb T_z(\mathbb H_\infty)[G])^\times.$$
Thus:
 $$U(\widetilde{\varphi}/W(B[z]))=\mathcal L(\varphi;1;z)W(B[z]).$$
\end{proof}

Let $\ev:\mathbb T_z(\mathbb H_\infty)\rightarrow \mathbb H_\infty$ be the evaluation map at $z=1.$ Then by Proposition \ref{PropositionS7.1}, we get:
 $$L(\varphi/W(B))={\rm det}_{\mathbb K_\infty}  \ev(\mathcal L(\varphi;1;z)),$$
where:
 $$\ev(\mathcal L(\varphi;1;z))=\prod_P(1-\frac{u_P}{\psi(P)}\sigma_P)^{-1}=\sum_{I} \frac{u_I}{\psi (I)} \sigma_I\in (\mathbb H_\infty[G])^\times,$$
where $I$ runs through the non-zero ideals of $A.$ Furthermore, by the above Theorem:
 $$\exp_\varphi(\ev(\mathcal L(\varphi;1;z))W(B))\subset W(B).$$
And also:
 $$\exp_\varphi(\ev(\mathcal L(\varphi;1;z))W(B)\mathbb B)\subset W(B)\mathbb B.$$
If we define the regulator of Stark units $\ev(\mathcal L(\varphi;1;z))W(B)\mathbb B$ as follows:
 $$[W(B)\mathbb B:\ev(\mathcal L(\varphi;1;z))W(B)\mathbb B]_{\mathbb A}:= {\rm det}_{ \mathbb K_\infty}\ev(\mathcal L(\varphi;1;z)),$$
then:
 $$L(\varphi/W(B))=[W(B)\mathbb B:\ev(\mathcal L(\varphi;1;z))W(B)\mathbb B]_{\mathbb A}.$$\par
We now briefly discuss the several  variable version of Theorem \ref{TheoremS7.1}. Let $s\geq 0$ be an integer. Let:
 $$K_s= {\rm Frac} (A^{\otimes s}),$$
where:
 $$A^{\otimes s}=A\otimes_{\mathbb F_q}\cdots \otimes _{\mathbb F_q}A.$$
If $s=0,$ then $K_0=\mathbb F_q.$ Let:
 $$\mathbb H_s={\rm Frac}(A^{\otimes s} \otimes_{\mathbb F_q} B),$$
 $$\mathbb K_s={\rm Frac}(A^{\otimes s} \otimes_{\mathbb F_q} A).$$
For $i=1, \ldots, s,$ let:
 $$\rho_i : A\rightarrow \mathbb H_s, \quad a\mapsto (1\otimes \cdots 1\otimes a\otimes \cdots \otimes 1) \otimes 1,$$
where $a$ appears at the $i$-th position. We still denote by $\rho_i: \mathbb H\rightarrow \mathbb H_s$ the homomorphism of $H$-algebras such that:
 $$\forall a\in A, \quad \rho_i(\rho(a))=\rho_i(a).$$
We view $\mathbb H_s$ and $\mathbb K_s$ as functions fields over $K_s \otimes 1.$ Let $\infty$ be the unique place of $\mathbb K_s/K_s \otimes 1$ above the place $\infty$ of $K/\mathbb F_q.$ Then:
 $$\mathbb K_{s, \infty} =(K_s \otimes 1)(\mathbb F_\infty)((1^{\otimes s} \otimes \pi)),$$
and we set:
 $$\mathbb H_{s, \infty}= \mathbb H_s\otimes_{\mathbb K_s}\mathbb K_{s, \infty}.$$
Let $\mathbb T_z(\mathbb H_{s, \infty})$ be the Tate algebra in the variable $z$ with coefficients in $\mathbb H_{s, \infty}.$
Let $\tau:\mathbb T_z(\mathbb H_{s, \infty})\rightarrow\mathbb T_z(\mathbb H_{s, \infty})$ be the continuous homomorphism of $(K_s \otimes 1)[z] $-algebras such that:
 $$\forall x\in H_\infty, \quad \tau (x)=x^q.$$
Let's set:
 $$W_s(B[z])=\oplus_{i_1, \ldots i_s\geq 0} B[z] \prod_{j=1}^s \rho_j(f)\cdots \tau^{(i_j-1)}(\rho_j(f))\subset \mathbb H_s[z].$$
In particular $W_0(B[z])=B[z].$
By similar arguments as those of the proof of Lemma \ref{LemmaS7.1}, we show that $W_s(B[z])$ is discrete in $\mathbb T_z(\mathbb H_{s, \infty}).$
For $n\geq 1,$  we set:
 $$\mathcal L(\varphi_s;n;z):=\prod_P(1-\frac{\prod_{j=1}^s \rho_j(u_P)}{\psi(P)^n}z^{\deg P}\sigma_P)^{-1}\in (\mathbb  T_z(\mathbb H_{s,\infty})[G])^\times,$$
where $P$ runs through the maximal ideals of $A.$ Then, by the same proof as that of Proposition \ref{PropositionS7.1}, for all $n\geq 1$, we get:
 $${\rm det}_{\mathbb T_z(\mathbb K_{s,\infty})}\mathcal L(\varphi_s;n;z)=\prod_{\frak P}(1-\frac{(\prod_{j=1}^s \rho_j([\frac{B}{\frak P B}]_A))z^{\deg N_{H/K}(\frak P)}}{[\frac{B}{\frak P B}]_A^n})^{-1}\in \mathbb T_z({\mathbb  K}_{s,\infty})^\times,$$
where $\frak P$ runs through the maximal ideals of $B.$

\medskip

We define:
 $$\exp_{\widetilde{\varphi}_s}=\sum_{i\geq 0} e_i(\phi) (\prod_{j=1}^s \rho_j(f)\cdots \tau^{i-1}(\rho_j(f)))z^i\tau ^i\in \mathbb H_s\{\{\tau\}\}.$$
Then $\exp_{\widetilde{\varphi}_s}$ converges on $\mathbb T_s(\mathbb H_{s, \infty}),$ and we set:
 $$U(\widetilde{\varphi}_s/W_s(B[z]))=\{ x\in \mathbb T_z(\mathbb H_{s,\infty}), \exp_{\widetilde{\varphi}_s}(x)\in W_s(B[z])\}.$$
By similar arguments as those of the proof of Theorem \ref{TheoremS7.1}, we get:

\begin{corollary}\label{CorollaryS7.1}
We have:
 $$U(\widetilde{\varphi}_s/W_s(B[z]))= \mathcal L(\varphi_s;1;z)W_s(B[z]).$$
\end{corollary}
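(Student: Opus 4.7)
The plan is to follow the same scheme as the proof of Theorem \ref{TheoremS7.1} almost verbatim, with the single shtuka function $f$ replaced everywhere by the tensor product $F_s := \prod_{j=1}^s \rho_j(f)$ and its $\tau$-twists $F_s \tau(F_s) \cdots \tau^{i-1}(F_s) = \prod_{j=1}^s \rho_j(f)\cdots \tau^{i-1}(\rho_j(f))$. The first step is to localize: for each maximal ideal $P$ of $A$, set $S = A\setminus P$ and $W_{s,P} = S^{-1} W_s(B[[z]])$. Because each $\rho_j(f)$ and its $\tau$-iterates are $P$-adic units (they have divisor supported away from the places above $P$, exactly as in the single variable case), the basis elements $\prod_{j=1}^s\rho_j(f)\cdots \tau^{i_j-1}(\rho_j(f))$ are $W_{s,P}$-units, and therefore $P W_{s,P} = \psi(P)\, W_{s,P}$.

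Next I would invoke Lemma \ref{LemmaS6.5} in order to get that $\exp_{\widetilde{\varphi}_s}$ preserves and stabilizes $P W_{s,P}$, in particular $\exp_{\widetilde{\varphi}_s}(P W_{s,P}) = P W_{s,P}$: the $P$-adic growth of the coefficients $e_i(\phi)$ is the only input, and the factors $\prod_j \rho_j(f)\cdots \tau^{i-1}(\rho_j(f))$ are $P$-units so they do not affect this estimate. Exactly as in Theorem \ref{TheoremS7.1}, this yields an isomorphism of $A[[z]]$-modules
\begin{equation*}
\frac{U(\widetilde{\varphi}_s/W_{s,P})}{PW_{s,P}}\simeq \widetilde{\varphi}_s\Bigl(\frac{W_{s,P}}{PW_{s,P}}\Bigr),
\end{equation*}
where $\widetilde{\varphi}_{s,P} := \sum_{i=0}^{\deg P} \phi_{P,i}\,(F_s\tau(F_s)\cdots \tau^{i-1}(F_s))\, z^i\tau^i$.

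The key algebraic identity I would then establish is
\begin{equation*}
\sigma_P\!\left(F_s\tau(F_s)\cdots \tau^{i-1}(F_s)\right)\prod_{j=1}^s \rho_j(u_P) \;=\; F_s\tau(F_s)\cdots \tau^{i-1}(F_s)\cdot \tau^{i}\!\left(\prod_{j=1}^s \rho_j(u_P)\right),
\end{equation*}
which follows by applying each $\rho_j$ to the identity $\sigma_P(f)u_P = f\tau(u_P)$ (proved in the paper) and taking the product over $j=1,\ldots,s$ and then telescoping. This is precisely what is needed to identify
\begin{equation*}
U(\widetilde{\varphi}_s/W_{s,P}) \;=\; \Bigl(1 - \frac{z^{\deg P}\prod_{j=1}^s \rho_j(u_P)}{\psi(P)}\,\sigma_P\Bigr)^{-1} W_{s,P},
\end{equation*}
by the same argument that in the single-variable case produced $(1 - \tfrac{z^{\deg P}u_P}{\psi(P)}\sigma_P)^{-1}W_P$.

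Finally, I would take the intersection $W_s(B[[z]]) = \bigcap_P W_{s,P}$ (as in the end of the proof of Theorem \ref{TheoremS7.1}) and conclude that $\exp_{\widetilde{\varphi}_s}(\mathcal L(\varphi_s;1;z)W_s(B[[z]])) = W_s(B[[z]])$. Intersecting with $\mathbb T_z(\mathbb H_{s,\infty})$ and using that $\mathcal L(\varphi_s;1;z)\in (\mathbb T_z(\mathbb H_{s,\infty})[G])^\times$ (which is the infinite-product construction done right before the statement) yields the desired equality. The main obstacle I anticipate is bookkeeping: checking that the several-variable Tate-algebra/discrete module structure of $W_s(B[z])\subset \mathbb T_z(\mathbb H_{s,\infty})$ really does intersect the formal $W_s(B[[z]])$-statement down to the $z$-polynomial level, i.e.\ the analogue of $W(B[[z]])\cap \mathbb T_z(\mathbb H_\infty) = W(B[z])$. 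This should follow from the same discreteness argument used in Lemma \ref{LemmaS7.1}(3), generalized to the $s$-fold tensor setting.
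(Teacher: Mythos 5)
Your proof is correct and follows the same route the paper intends; the paper's own ``proof'' is in fact just the one-line remark ``by similar arguments as those of the proof of Theorem~\ref{TheoremS7.1},'' and you have fleshed out precisely that adaptation (localizing at each $P$, showing $\exp_{\widetilde{\varphi}_s}(PW_{s,P})=PW_{s,P}$ via Lemma~\ref{LemmaS6.5}, establishing the twisted Euler factor $(1-\tfrac{z^{\deg P}\prod_j\rho_j(u_P)}{\psi(P)}\sigma_P)^{-1}$, and intersecting). One small bookkeeping caveat: the honest basis of $W_s(B[z])$ is indexed by multi-indices $(i_1,\ldots,i_s)$, not just by the diagonal $(i,\ldots,i)$ used in your shorthand $F_s\tau(F_s)\cdots\tau^{i-1}(F_s)$; the needed identity $\sigma_P\bigl(\prod_j\rho_j(f\cdots f^{(i_j-1)})\bigr)\prod_j\rho_j(u_P)=\prod_j\rho_j(f\cdots f^{(i_j-1)})\,\prod_j\tau^{i_j}(\rho_j(u_P))\in W_s(B)$ holds for arbitrary multi-indices by applying $\rho_j$ to the single-variable lemma componentwise, so nothing breaks, but you should state it in this form when writing it up.
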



\begin{example}\label{ExampleS7.1}${}$
We consider the Carlitz example, where $g=0$ and $d_\infty=1.$  Observe that there exists $\theta\in K$ such that $\sgn(\theta)=1,$  and $A=\mathbb F_q[\theta].$ Thus, $K=\mathbb F_q(\theta),$ and $K_\infty=\mathbb F_q((\frac{1}{\theta})).$

Let $\phi:A \rightarrow \overline{K}_\infty\{\tau\}$ be the Carlitz module defined by
 $$\phi_\theta= \theta+\tau.$$
Then the Carlitz exponential is given by:
 $$\exp_\phi=\sum_{i\geq 0}\frac{1}{D_i} \tau^i,$$
where for $i\geq 0,$ $D_i=\prod_{k=0}^{i-1} (\theta^{q^i}-\theta^{q^k}).$

The Hilbert class field $H$ of $K$ is $K,$ and then $B=A.$ Then, the shtuka function $f \in K \otimes H$ associated to the Carlitz module via the Drinfeld correspondence is given by:
 $$f=\theta\otimes 1-1\otimes \theta.$$

Let $s\geq 0$ be an integer. For $i=1, \ldots, s$, let $t_i=\rho_i(\theta).$
We have:
 $$\rho_i(f)= t_i-\theta,$$
 $$\mathbb H_s=\mathbb K_s=\mathbb F_q(t_1, \ldots, t_s, \theta),$$
 $$\mathbb H_{s, \infty}=\mathbb K_{s, \infty}= \mathbb F_q(t_1, \ldots, t_s)((\frac{1}{\theta})).$$
For $i\geq 0, j=1, \cdots s,$ set:
 $$b_i(t_j)=\prod_{k=0}^{i-1}(t_j-\theta^{q^k}).$$
We get:
 $$W_s(B[z])=A[t_1, \ldots, t_s][z].$$
Observe that:
 $$\exp_{\widetilde{\varphi}_s}=\sum_{i\geq 0}\frac{\prod_{j=1}^s b_i(t_j)}{D_i} \tau^i.$$
We have:
 $$\mathcal L(\varphi_s;1;z)=\sum_{a\in A_+} \frac{a(t_1)\cdots a(t_s)}{a} z^{\deg_\theta a},$$
where $A_+$ denotes the set of monic polynomials in $A=\mathbb F_q[\theta].$ In particular, for $s=1$, we recover the zeta function introduced by Pellarin \cite{PEL}.

Corollary \ref{CorollaryS7.1} implies:
 $$\exp_{\widetilde{\varphi}_s}( \mathcal L(\varphi_s;1;z)A[t_1, \ldots, t_s,z])\subset A[t_1, \ldots, t_s,z].$$
We refer the interested reader to  \cite{ANDTR2}, \cite{APT},  \cite{APTR}, \cite{ATR}, for arithmetic applications of this latter result.
\end{example}


\subsection{Another proof of Anderson's log-algebraicity theorem}\label{Section another proof}${}$

\medskip

\begin{corollary}\label{CorollaryS7.2}
Let $n\geq 0$ and let $X_1, \ldots, X_n,z$ be $n+1$ indeterminates over $K.$ Let $\tau: K[X_1, \ldots, X_n][[z]]\rightarrow K[X_1, \ldots, X_n][[z]]$ be the continuous $\mathbb F_q[[z]]$-algebra homomorphism for the $z$-adic topology such that $\forall x\in K[X_1, \ldots, X_n], \tau (x)=x^q.$ Then:
 $$\forall b\in B, \quad \exp_{\widetilde{\phi}}(\sum_I \frac{\sigma_I(b)}{\psi(I)} \phi_I(X_1)\cdots \phi_I(X_n) z^{\deg I})\in B[X_1, \ldots, X_n, z],$$
where $I$ runs through the non-zero ideals of $A,$ and:
 $$\exp_{\widetilde{\phi}}=\sum_{i\geq 0} e_i(\phi) z^i \tau^i.$$
\end{corollary}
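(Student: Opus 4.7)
The plan is to deduce Corollary \ref{CorollaryS7.2} from Corollary \ref{CorollaryS7.1} applied with $s=n$, via a specialization map. Define an $H[z]$-linear map $\Psi$ by sending each basis element of $W_n(H[z])$ to a monomial:
$$\Psi\colon W_n(H[z]) \longrightarrow H[X_1,\ldots,X_n,z], \qquad \prod_{j=1}^n \rho_j(f)\cdots\rho_j(f)^{(i_j-1)} \longmapsto \prod_{j=1}^n X_j^{q^{i_j}}.$$
Note that $\Psi\bigl(W_n(B[z])\bigr)\subset B[X_1,\ldots,X_n,z]$, and $\Psi$ extends in the obvious term-by-term manner to the relevant completions. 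Using the expansion $\rho_j(u_I)=\sum_k \phi_{I,k}\,\rho_j(f)\cdots\rho_j(f)^{(k-1)}$ with $\phi_{I,k}\in B$, we get $\Psi(\rho_j(u_I))=\phi_I(X_j)$, and therefore
$$\Psi\bigl(\mathcal L(\varphi_n;1;z)\cdot b\bigr) = \sum_I \frac{\sigma_I(b)}{\psi(I)}\prod_{j=1}^n \phi_I(X_j)\, z^{\deg I}.$$

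By Corollary \ref{CorollaryS7.1} applied to $b\in B\subset W_n(B[z])$, the element $\exp_{\widetilde{\varphi}_n}\bigl(\mathcal L(\varphi_n;1;z)\cdot b\bigr)$ lies in $W_n(B[z])$, so its image under $\Psi$ automatically lies in $B[X_1,\ldots,X_n,z]$. The corollary will then follow from the intertwining identity
$$\Psi\bigl(\exp_{\widetilde{\varphi}_n}(\mathcal L(\varphi_n;1;z)\cdot b)\bigr) = \exp_{\widetilde{\phi}}\Bigl(\sum_I \frac{\sigma_I(b)}{\psi(I)}\prod_{j=1}^n \phi_I(X_j)\,z^{\deg I}\Bigr).$$

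To establish this identity, expand the left-hand side using $\exp_{\widetilde{\varphi}_n}=\sum_i e_i(\phi)\prod_j \rho_j(f)\cdots\rho_j(f)^{(i-1)} z^i\tau^i$; since $\tau$ commutes with $\rho_j$ and with $\sigma_I$, each summand of index $(i,I)$ gathers a factor $\rho_j(f\cdots f^{(i-1)})\cdot\rho_j(u_I^{(i)})=\rho_j\bigl((f\cdots f^{(i-1)})u_I^{(i)}\bigr)$. Plugging in $u_I^{(i)}=\sum_k \phi_{I,k}^{q^i} f^{(i)}\cdots f^{(i+k-1)}$ gives $(f\cdots f^{(i-1)})u_I^{(i)}=\sum_k \phi_{I,k}^{q^i} f\cdots f^{(i+k-1)}\in W(B)$, and a direct term-by-term application of $\Psi$ yields
$$\Psi\Bigl(\prod_{j=1}^n \rho_j\bigl((f\cdots f^{(i-1)})u_I^{(i)}\bigr)\Bigr) = \Bigl(\prod_{j=1}^n \phi_I(X_j)\Bigr)^{q^i}.$$
The full expression therefore collapses to $\sum_{i,I}e_i(\phi)\bigl(\sigma_I(b)\prod_j\phi_I(X_j)/\psi(I)\bigr)^{q^i} z^{i+\deg I}$, which is exactly $\exp_{\widetilde{\phi}}$ applied to the required argument. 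The main obstacle is the bookkeeping in this last display: $\Psi$ is neither multiplicative nor does it intertwine $\tau$ on the whole of $W_n$, but it does respect the specific combinations forced by the explicit form of $u_I$, and this is what makes the specialization commute with the two exponentials.
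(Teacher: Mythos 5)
Your proposal is correct and follows essentially the same route as the paper: your $\Psi$ is exactly the paper's isomorphism $\gamma\colon W_n(B[z])\to\bigoplus B[z]X_1^{q^{i_1}}\cdots X_n^{q^{i_n}}$, and the intertwining identity you verify term-by-term is what the paper records abstractly as $\gamma\circ\prod_j\rho_j(f)\tau=\tau\circ\gamma$ together with $\gamma(\prod_j\rho_j(u_I))=\phi_I(X_1)\cdots\phi_I(X_n)$. The only cosmetic difference is that the paper treats $n=0$ separately via Theorem \ref{TheoremS7.1} (coefficient extraction on $f\cdots f^{(k-1)}$) whereas you invoke Corollary \ref{CorollaryS7.1} uniformly for all $n\geq 0$, which is a harmless streamlining.
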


\begin{proof}
We first treat the case $n=0.$ Let $b \in B.$ By Theorem \ref{TheoremS7.1}, we get:
$$\forall k\geq 0, \quad \sum_{\deg I +i= k} e_i(\phi)f\cdots f^{(i-1)} \frac{\tau^i(u_I \sigma_I(b))}{\psi(I)^{q^i}}\in W(B),$$
and:
$$\forall k \gg 0, \quad \sum_{\deg I +i= k} e_i(\phi)f\cdots f^{(i-1)} \frac{\tau^i(u_I \sigma_I(b))}{\psi(I)^{q^i}}=0.$$
The coefficient of $f\cdots f^{(k-1)}$ in $\sum_{\deg I +i= k} e_i(\phi)f\cdots f^{(i-1)} \frac{\tau^i(u_I \sigma_I(b))}{\psi(I)^{q^i}}$ is:
 $$  \sum_{\deg I +i= k} e_i(\phi) \frac{\sigma_I(b)^{q ^i}}{\psi(I)^{q^i}}.$$
Therefore:
$$\forall k\geq 0,  \quad \sum_{\deg I +i= k} e_i(\phi) \frac{\sigma_I(b)^{q ^i}}{\psi(I)^{q^i}}\in B.$$
 $$\forall k \gg 0, \quad \sum_{\deg I +i= k} e_i(\phi) \frac{\sigma_I(b)^{q ^i}}{\psi(I)^{q^i}}=0.$$
 Thus:
 $$\exp_{\widetilde{\phi}}(\sum_I \frac{\sigma_I(b)}{\psi(I)}z^{\deg I})\in B[z].$$\par
We now assume that $n\geq 1.$  We have an isomorphism of $B[z]$-modules
 $$\gamma: W(B[z])  \rightarrow \oplus_{i_1, \ldots, i_n\geq 0}B[z] X_1^{q^{i_1}} \cdots X_n^{q^{i_n}}$$
such that:
 $$\forall i_1, \ldots, i_n \in \mathbb N, \quad \gamma( \prod_{j=1}^n \rho_j(f\cdots f^{(i_j-1)}))=\prod_{j=1}^n X_j^{q^{i_j}}.$$
 Observe that:
 $$\gamma \circ \prod_{j=1}^n \rho_j(f)\tau = \tau \circ \gamma.$$
 Furthermore:
 $$\gamma ((\prod_{j=1}^n \rho_j(u_I)))=\phi_I(X_1)\cdots \phi_I(X_n).$$
 Thus, we get by Corollary \ref{CorollaryS7.1}:
 $$\exp_{\widetilde{\varphi}_n}(\mathcal L (\varphi_n;1;z)b)\in W_n(B[z]),$$
 and thus:
 $$\exp_{\widetilde{\phi}}(\sum_I \frac{\sigma_I(b)}{\psi(I)} \phi_I(X_1)\cdots \phi_I(X_n) z^{\deg I})\in \oplus_{i_1, \ldots i_n \geq 0}B[z]X_1^{q^{i_1}} \cdots X_n^{q^{i_n}}.$$
\end{proof}

\begin{remark}\label{RemakS7.1}
Let $s\geq 1$ be an integer and let $B\{\tau_1, \ldots, \tau_s\}$ be the non-commutative polynomial ring in the variables $\tau_1, \ldots, \tau_s,$ such that:
 $$\tau_i\tau_j=\tau_j \tau_i,$$
 $$\forall b\in B, \forall n\geq 0, \quad \tau_i^n b= b^{q^n  }\tau_i.$$
For $i=1, \ldots, s,$ we set:
 $$\forall a\in A, \quad \varphi_{i,a}=\sum_{j=0}^{\deg a} \phi_{a, j}  \tau_i^j\in B\{\tau_1, \ldots, \tau_s\},$$
and:
 $$\forall a\in A, \quad \varphi_a=\sum_{j=0}^{\deg  a} \phi_{a, j} \tau^j\in B\{\tau_1, \ldots, \tau_s\},$$
where $\tau =\tau_1\cdots \tau_s.$

Let $W_s(B)=\oplus_{i_1, \ldots, i_s} B\prod_{j=1}^s \rho_j(f)\cdots \tau^{i_j-1}(\rho_j(f))\subset \mathbb H_s.$ Then $W_s(B)$ is an $A^{\otimes s} \otimes B$-module. Let $j\in \{1, \ldots s\}.$ Let $a\in A,$ we have a natural $B$-module homomorphism:
 $$\widetilde{\rho}_j(a):  B\{\tau_1, \ldots, \tau_s\}\rightarrow B\{\tau_1, \ldots, \tau_s\},$$
such that:
 $$\forall i_1,\ldots, i_s\in \mathbb N, \quad \widetilde{\rho}_j(a).( \tau_1^{i_1} \cdots \tau_s^{i_s})= (\tau_j^{i_j}\varphi_{j, a})\prod_{k=1, k\not =j}^s\tau_k^{i_k}.$$
Observe that:
 $$\forall i,j \in \{1, \ldots s\}, \forall a,b\in A, \quad \widetilde{\rho}_j(a)\circ \widetilde{\rho}_i(b)=\widetilde{\rho}_i(b)\circ \widetilde{\rho}_j(a).$$
Thus  $B\{\tau_1, \ldots, \tau_s\}$ becomes an $A^{\otimes s} \otimes B$-module via:
 $$\forall x\in B\{\tau_1, \ldots, \tau_s\} , \quad (\sum_ib_i\prod_{j=1}^s \rho_j(a_{i,j})) \cdot x=\sum_ib_i(\prod_{j=1}^s \widetilde{\rho}_j(a_{i,j}))(x).$$
Then, by the proof of Corollary \ref{CorollaryS7.2}, we have an $A^{\otimes s} \otimes B$-module isomorphism:
 $$B\{\tau_1, \ldots, \tau_s\}\simeq W_s(B).$$
In particular, $B\{\tau_1, \ldots, \tau_s\}$ is a finitely generated $A^{\otimes s} \otimes B$-module of rank one. The case $s=1$ was already observed by G. Anderson (\cite{GOS}, page 230, line 21 - there is a misprint in line 24, since in general $f \not \in A \otimes _{\mathbb F_q}\mathbb C_\infty$). If $I$ is a non-zero ideal of $A,$ we define  $I*\cdot:B\{\tau_1, \ldots, \tau_s\}\rightarrow B\{\tau_1, \ldots, \tau_s\}$ to be the $B$-module homomorphism such that:
 $$I*(\tau_1^{i_1} \cdots \tau_s^{i_s})= \sum_{j_1, \ldots, j_s\in \{0, \ldots, \deg I\}} \phi_{I,j_1}^{q^{i_1}}\cdots \phi_{I,j_s}^{q^{i_s}}\tau_1^{i_1+j_1}\cdots \tau_s^{i_s+j_s},$$
where $\phi_I= \sum_{k=0}^{\deg I} \phi_{I,k} \tau^k.$

Let $\mathcal L: B\{\tau_1, \ldots, \tau_s\}\rightarrow H\{\{\tau_1, \ldots, \tau_s\}\}$ be defined as follows:
$$\mathcal L (\sum_{i_1, \ldots i_s} b_{i_1, \ldots, i_s} \tau_1^{i_1} \cdots \tau_s ^{i_s})= \sum_{i_1, \ldots, i_s}\sum_I \frac{\sigma_I(b_{i_1, \ldots, i_s})}{\psi(I)} I*(\tau_1^{i_1}\cdots \tau_s^{i_s}).$$
Then by Corollary \ref{CorollaryS7.1}, we get that the multiplication by $\exp_\phi\in H\{\{\tau \}\}$ on $H\{\{\tau_1, \ldots, \tau_s\}\}$ yields to the following property:
$$\forall x\in B\{\tau_1, \ldots, \tau_s\}, \quad \exp_\phi (\mathcal L(x)) \in B\{\tau_1, \ldots, \tau_s\}.$$
\end{remark}


\end{document}